\theoremstyle{plain}
\newtheorem{theorem}{Theorem}[section]
\newtheorem{lemma}[theorem]{Lemma}
\newtheorem{proposition}[theorem]{Proposition}
\newtheorem{corollary}[theorem]{Corollary}
\theoremstyle{definition}
\newtheorem{definition}[theorem]{Definition}
\newtheorem{example}[theorem]{Example}
\newtheorem{remark}[theorem]{Remark}
\newcommand{\A}{\mathbb{A}}
\newcommand{\C}{\mathbb{C}}
\newcommand{\p}{\mathbb{P}}
\newcommand{\Q}{\mathbb{Q}}
\newcommand{\R}{\mathcal{R}}
\newcommand{\Z}{\mathbb{Z}}
\newcommand{\Ox}{\mathcal{O}}
\DeclareMathOperator{\im}{im}
\DeclareMathOperator{\Pic}{Pic}
\DeclareMathOperator{\Spec}{Spec}
\DeclareMathOperator{\Proj}{Proj}
\let\@wraptoccontribs\wraptoccontribs\makeatother
\newcommand{\coxX}{S}
\newcommand{\pic}{\Pic}
\title{On the Cox rings of some hypersurfaces}
\begin{document}

\author{Andrew Pollock}
\address{University of Vienna, Austria}
\email{andrew.scott.pollock@univie.ac.at}

\author{Atsushi Ito}
\address{Tsukuba University, Japan}
\email{ito-atsushi@math.tsukuba.ac.jp}

\author{Bal\'{a}zs Szendr\H{o}i}
\address{University of Vienna, Austria}
\email{balazs.szendroi@univie.ac.at}

\begin{abstract}
\noindent We introduce a cohomological method to compute Cox rings of hypersurfaces in the ambient space $\p^1 \times \p^n$, which is more direct than existing methods. We prove that smooth hypersurfaces defined by regular sequences of coefficients are Mori dream spaces, generalizing a result of Ottem. We also compute Cox rings of certain specialized examples. In particular, we compute Cox rings in the well-studied family of Calabi--Yau threefolds of bidegree $(2,4)$ in $\p^1 \times \p^3$, determining explicitly how the Cox ring can jump discontinuously in a smooth family. 
\end{abstract}

\maketitle

\thispagestyle{empty}

\section*{Introduction}

Let $X$ be a smooth projective variety over~$\C$, with Picard group $\Pic X$ a free $\Z$-module for simplicity. The Cox ring 
\begin{equation*}
    \R(X) = \bigoplus_{D \in \Pic X} H^0(X,\Ox_X(D))
\end{equation*}
has the natural structure of a $\C$-algebra graded by $\Pic X$; details including a careful definition can be found in \cite{CoxBook}. Let $\iota\colon Z\hookrightarrow X$ be a smooth or mildly singular hypersurface, defined by a homogeneous equation $r\in 
\R(X)$. Under suitable conditions, the restriction map $\Pic(X)\to\Pic(Z)$
is an isomorphism; we will assume this for the rest of the Introduction, and identify the Picard groups via restriction. 
Alongside the restriction map on line bundles, we also have a restriction map between Cox rings
\[\iota^*\colon \R(X)\to \R(Z). 
\]
This map was first studied explicitly by Hausen and Artebani--Laface~\cite{hausen, AL}, giving conditions for $\iota^*$ to be surjective, leading to an isomorphism
\[\R(Z) \cong \R(X)/\langle r \rangle.\]
The aim of our paper is to give a new, explicit method to compute the Cox rings of some hypersurfaces~$Z$ in the specific ambient space $X = \p^1 \times \p^n$ for $n\geq 3$. In this case, the map $\iota^*\colon \R(X)\to \R(Z)$ is not surjective, with $O_Z(D|_Z)$ having sections that do not come from $O_X(D)$ for certain $D$.

We first define the hypersurfaces studied in this paper. Throughout, we fix $n \geq 3$ and $d,e \geq 1$.

\begin{definition}\label{def:Ztypes}
Let $Z$ be an irreducible and normal hypersurface
\begin{equation} \label{eq:generalZform}
Z = \left\{g_0x_0^d + g_1 x_0^{d-1} x_1 + \ldots + g_d x_1^d=0\right\}\subset\p^1\times\p^n
\end{equation}
defined by a set of homogeneous degree $e$ polynomials $g_0,\dots,g_d \in \C[y_0,\dots,y_n]$.
    \begin{enumerate}
        \item[(i)] We say $Z$ admits an {\em index sequence of length $l$}, with $1 \leq l \leq d$, if there are indices $0 = i_0 < \cdots < i_l = d$ such that $g_i \neq 0$ if and only if $i \in \{i_0,\dots,i_l\}$; we call the set of indices $\{i_0,\dots,i_l\}$ the {\em index sequence} of $Z$.
        \item[(ii)] We call $Z$ {\em regular with index sequence $\{i_0,\ldots, i_l\}$}, if it is non-singular, admits the index sequence $\{i_0,\dots,i_l\}$, and the homogeneous polynomials $g_{i_0},\dots,g_{i_l}$ form a regular sequence in the graded ring $\C[y_0,\dots,y_n]$.
    \end{enumerate}
\end{definition}


Note that 
amongst hypersurfaces $Z\subset\p^1\times\p^n$ admitting a given index sequence $\{i_0,\ldots, i_l\}$, being regular with index sequence $\{i_0,\ldots, i_l\}$ is an open condition. 

The following is our main result (see Theorem~\ref{thm:general}, Proposition~\ref{prop:converse}). 

\begin{theorem} \label{thm:mainintro}
Suppose that $Z\subset\p^1\times\p^n$ is a nonsingular hypersurface of the form \eqref{eq:generalZform}, 
with $n \geq 3$ and $d,e \geq 1$, admitting an index sequence $\{i_0,\dots,i_l\}$ of length $l$.
\begin{enumerate}
\item[(i)] Suppose that $Z$ is regular with index sequence $\{i_0,\dots,i_l\}$, that is $g_{i_0},\dots,g_{i_l}$ form a regular sequence in $\C[y_0,\dots,y_n]$. Then there is an isomorphism of $\Z^2$-graded algebras
\begin{equation}\label{presentationR}
    \R(Z)\cong \C[x_0,x_1,y_0,\dots,y_n,w_1,\dots,w_l]/I,
    \end{equation}
    where $\deg(x_i) = (1,0)$, $\deg (y_j) = (0,1)$, $\deg(w_k) = (i_{k-1}-i_k,e)$ for relevant $i,j,k$ and $I$ is the homogeneous ideal
    \[
    I = \langle x_1^{i_1-i_0} w_1 + g_0, x_1^{i_2-i_1} w_2 + g_1 - x_0^{i_1-i_0} w_1,\ldots, g_d - x_0^{i_l-i_{l-1}} w_l\rangle.
    \]
    In particular, $Z$ is a Mori dream space. 
\item[(ii)] Conversely, suppose that there exists an isomorphism of the form~\eqref{presentationR}, with degrees and ideal $I$ as given. Then $g_{i_0},\dots,g_{i_l}$ form a regular sequence in $\C[y_0,\dots,y_n]$. In other words, $Z$ is regular with index sequence $\{i_0,\dots,i_l\}$.
\end{enumerate}
\end{theorem}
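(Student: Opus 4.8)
The plan is to establish the isomorphism \eqref{presentationR} in part (i) by identifying the extra Cox ring generators $w_1,\dots,w_l$ geometrically and then checking the presentation degree by degree. First I would set up the cohomological machinery: for a divisor class $D = (a,b) \in \Z^2 = \Pic X$ restricted to $Z$, the short exact sequence $0 \to \Ox_X(D - [Z]) \to \Ox_X(D) \to \Ox_Z(D|_Z) \to 0$ gives a long exact cohomology sequence, and the failure of $\iota^*$ to be surjective is controlled by $H^1(X, \Ox_X(D-[Z]))$. Since $X = \p^1\times\p^n$ and $[Z] = (d, de)$, the Künneth formula computes these $H^1$ groups explicitly: they are nonzero precisely when the $\p^1$-degree $a-d$ is negative enough and the $\p^n$-degree is in the right range, which is exactly where the new sections supported on the components cut out by vanishing of some $g_i$ live. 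The classes $\deg(w_k) = (i_{k-1}-i_k, e)$ are the minimal such classes, and I would construct $w_k$ as the section of $\Ox_Z((i_{k-1}-i_k)[\{x_0=0\}] + e[\{*\}])$ obtained, on the open set where $g_{i_{k-1}},\dots$ are used, by the rational expression forced by the defining equation — concretely $w_k$ restricted appropriately equals $-(g_{i_{k-1}}x_0^{\cdots} + \cdots)/x_1^{\cdots}$, which is regular on $Z$ because of the relation \eqref{eq:generalZform}.

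The core of the argument is then to show these generate and that the ideal of relations is exactly $I$. For generation I would argue by induction on the $\p^1$-degree: given a section $s$ of $\Ox_Z(a,b)$ with $a$ below the threshold, subtract off a suitable polynomial multiple of some $w_k$ to reduce the pole order along $\{x_1 = 0\}$ (or $\{x_0=0\}$), eventually landing in the image of $\iota^*$, which is $\C[x_0,x_1,y_0,\dots,y_n]$ modulo the defining equation. This uses the regularity hypothesis crucially: that $g_{i_0},\dots,g_{i_l}$ form a regular sequence ensures the relevant Koszul-type cohomology vanishing, so that the dimension count of $H^0(Z,\Ox_Z(a,b))$ matches the Hilbert function of the proposed quotient ring in every bidegree. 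For the relations, the displayed generators of $I$ are precisely the defining equation of $Z$ rewritten using the substitutions $g_{i_{k-1}} = x_0^{i_k - i_{k-1}} w_k - x_1^{i_{k+1}-i_k} w_{k+1}$ that define the $w_k$; I would check that any relation among $x_0,x_1,y_j,w_k$ lies in $\langle I \rangle$ by a filtration/leading-term argument, again reducing to the regular sequence property of the $g_{i_j}$ in $\C[y_0,\dots,y_n]$.

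For part (ii), the converse, I would argue contrapositively. Suppose $g_{i_0},\dots,g_{i_l}$ do not form a regular sequence; then some syzygy exists, i.e. there is a nontrivial relation $\sum_j h_j g_{i_j} = 0$ with $h_j \in \C[y_0,\dots,y_n]$ not in the Koszul-trivial part. I would translate this into the statement that the ring $\C[x_0,x_1,y_j,w_k]/I$, with the stated degrees, fails to have the Hilbert function that $\R(Z)$ must have — either it has extra relations (so the map from the polynomial ring is not injective on the nose in the predicted way) or, more to the point, the dimension of a graded piece of $\C[x_0,x_1,y_j,w_k]/I$ differs from $\dim H^0(Z,\Ox_Z(a,b))$, which one computes independently via the cohomology sequence above without assuming regularity. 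The cleanest route is probably to show directly that when the sequence is not regular, the first syzygy produces an element of $I$-degree equal to some $\deg(w_k) + (\text{positive})$ that is \emph{not} in $I$ but \emph{must} map to zero in $\R(Z)$, contradicting the assumed isomorphism; alternatively compare Krull dimensions, noting $\dim \R(Z) = \dim Z + \rank \Pic Z = n + 2$ while a non-regular sequence makes the quotient ring have dimension $> n+2$.

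The main obstacle I anticipate is the generation step in part (i): proving that the $w_k$ together with the pullbacks from $X$ generate $\R(Z)$ in \emph{every} bidegree, including the mixed region where both the $x_0$ and $x_1$ pole-order reductions interact across several consecutive $w_k$. Making the induction bookkeeping precise — tracking which monomials in the $w_k$ are needed and showing the reduction terminates inside $\im \iota^*$ — is where the regular-sequence hypothesis has to be deployed with care, most likely via an explicit resolution or a spectral-sequence computation of the cohomology of $\Ox_X(D - [Z])$ twisted by the Koszul complex on $g_{i_0},\dots,g_{i_l}$. I would expect to isolate this as a separate lemma (the "Hilbert function lemma") computing $\dim_\C \R(Z)_{(a,b)}$ in closed form, after which both the generation statement and the converse in part (ii) follow by comparison.
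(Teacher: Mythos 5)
Your part (i) plan follows the same skeleton as the paper's proof (Künneth computation of $H^1$, the explicit rational expressions defining the $w_k$, and a pole-order-reduction induction for generation — this last step is genuinely delicate and occupies a whole section of the paper, but you have at least identified the right mechanism; also note the class of $Z$ is $(d,e)$, not $(d,de)$). The genuine gap in (i) is the relations step. A ``filtration/leading-term argument'' or a ``Hilbert function lemma'' is not yet an argument: to compute the Hilbert function of $\C[x_0,x_1,y_j,w_k]/I$ in every bidegree you would need to know that the $l+1$ generators of $I$ form a regular sequence in the polynomial ring (equivalently, that the quotient is a complete intersection of dimension $n+2$), and that is precisely what has to be established, not assumed; the grading mixes positive and negative $\p^1$-degrees, so no obvious term order or filtration does the job. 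The paper closes this differently: once $\psi$ is surjective, $Z$ is a Mori dream space, so $\dim\R(Z)=\dim Z+\rho(Z)=n+2$ by Hu--Keel; then one proves the candidate ring $U/I$ is an integral domain of dimension $n+2$ by a geometric argument (its localizations at $X_0$ and $X_1$ are localizations of the coordinate ring of the affine cone over $Z$, the complement of $\{X_0=X_1=0\}\cup\{Y_0=\dots=Y_n=0\}$ is a $(\C^*)^2$-bundle over $Z$, every component has dimension at least $n+2$ since $I$ has $l+1$ generators, hence the spectrum is irreducible, a complete intersection, Cohen--Macaulay and regular in codimension one, hence normal and integral), and a surjection between domains of the same dimension is injective. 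Some argument of this strength is needed to finish (i).

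For part (ii) your argument has a concrete flaw. The dimension fallback ``a non-regular sequence makes the quotient ring have dimension $>n+2$'' is false in general: failure of regularity only gives $\dim\C[y_0,\dots,y_n]/\langle g_{i_0},\dots,g_{i_l}\rangle\geq n+1-l$, so the locus $\{X_0=X_1=0\}$ in the spectrum of the candidate ring has dimension at least $n+1$, which need not exceed $n+2$; the total Krull dimension can remain $n+2$ even when the sequence is not regular. Your ``cleanest route'' (a syzygy element that ``must map to zero in $\R(Z)$'') also presupposes that the isomorphism carries the $w_k$ to the specific sections you constructed, whereas the hypothesis is only an \emph{abstract} $\Z^2$-graded isomorphism. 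The missing input, which is how the paper argues, is structural: by degree considerations any graded isomorphism matches the $\C$-span of $X_0,X_1$ with that of $x_0,x_1$, and $x_0,x_1$ form a regular sequence in the Cox ring of $Z$ (Hu--Keel, since they have no common zero on $Z$); hence quotienting by $\langle X_0,X_1\rangle$ drops the dimension by exactly $2$, giving $\dim\C[Y_j,W_k]/\langle g_{i_0},\dots,g_{i_l}\rangle=n$ and therefore $\dim\C[y_j]/\langle g_{i_0},\dots,g_{i_l}\rangle=n-l$, which is regularity. Without this (or an equivalent exact dimension constraint) your contrapositive does not close.
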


This result substantially strengthens a result of Ottem~\cite{ottem}, who proves (i) only for general $Z$ admitting full index sequence $\{0,1,\dots,d\}$. It also provides a converse statement.

Our proofs use standard cohomological machinery to reduce the problem to an algebraic one, involving syzygies of the set $\{g_{i_0}, \dots, g_{i_l}\}$ of elements of $R$, as well as induction. Our arguments are longer than those of~\cite{ottem}, but are more direct, able to handle also the more general setting of Theorem \ref{thm:mainintro}.  More details, and further results based on the method, will be presented in~\cite{pollock}. Our examples can also be studied using work of Herrera, Laface and Ugaglia~\cite{lafaceetal}, who use yet another method involving localizations of the Cox ring $\R(X)$ of the embedding space $X=\p^1\times\p^n$; our approach gives a natural reason for the appearance of these localizations. 

For $n=3$ and $(d,e)=(2,4)$, the family 
\[{\mathcal Z} = \left\{g_0 x_0^2 + t g_1 x_0x_1 + g_2 x_1^2 =0 \right\}\subset \p^1\times \p^3\times \A^1_t
\]
is a well-studied family of Calabi--Yau threefolds, see in particular the recent~\cite{constantinetal, constantin}. In this case, Theorem~\ref{thm:mainintro} shows that this is an example of a smooth family of Calabi--Yau threefolds in which the Cox ring jumps on a closed subset of the moduli space; see Theorem~\ref{thm_cy}. We also discuss a further, singular example in this family in Example~\ref{ex:det} .

Theorem~\ref{thm:mainintro} applies in particular when $Z$ is regular and $e=1$,
with the projection $Z\to\p^1$ being a $\p^{n-1}$-bundle. If $d>n$, then the linear forms $\{g_0, \dots, g_d\}$ 
are even linearly dependent, and certainly cannot form a regular sequence, so by Theorem~\ref{thm:mainintro}(ii), the Cox ring is not of the form~\eqref{presentationR}, contrary to the statement \cite[Thm.1.1(iv)]{ottem}. We discuss one case in Example~\ref{ex:linear}-Proposition~\ref{prop:linear}.

Finally in Example~\ref{ex:regcasemodel}, we explain from our biregular point of view a birational transformation $Z\dashrightarrow Z'$ that played an important role in Ottem's arguments~\cite[Section 2]{ottem}.

Section 1 explains our cohomological approach to the problem. Section 2 is the main part of our paper, where we build up to the proof of our main results, modulo an algebraic statement that is relegated to Section 4. Section 3 discusses our examples.

\medskip

\noindent {\bf Acknowledgments and authorship } \ We thank Antonio Laface for several comments and for an argument sketched in Section 3, 
and Geoffrey Mboya for discussions about Cox rings. This paper forms part of the doctoral project of the first-named author, under the supervision of the last-named author. After an initial preprint by these two authors, the middle author made suggestions for strengthening the results. A proof of this stronger Theorem~\ref{thm:mainintro} was worked out jointly, leading to the current manuscript with an updated set of authors. 

\section{Our approach}
\subsection{Basics}

Recall our setup from the Introduction: assume that $\iota\colon Z\hookrightarrow X$ is a smooth or mildly singular hypersurface, with an isomorphism $\Pic(X)\cong\Pic(Z)$, a finitely generated free abelian group that we will denote by $\pic$. 
Let $E=[Z]\in \pic$ be the class of~$Z$. 

The Cox rings $\R(X)$ and $\R(Z)$ are both $\pic$-graded algebras
\[\R(X) = \bigoplus_{D\in\Pic} \R(X)_D, \ \ \ \R(Z) = \bigoplus_{D\in\Pic} \R(Z)_D, \]
connected by the graded homomorphism $\iota^*\colon \R(X)\rightarrow \R(Z)$. 
The interesting case for us is when the inclusion $\iota^*\R(X)\hookrightarrow \R(Z)$ is not surjective. 

The hypersurface $Z$ is given by a section $r \in \R(X)_E$. For $D\in \pic$, consider the standard exact sequence
\begin{equation}\label{seq:standard}
0\longrightarrow \Ox_X(D-E)\stackrel{\cdot r}\longrightarrow \Ox_X(D) \stackrel{\iota^*}\longrightarrow \Ox_Z(D)\longrightarrow 0. \end{equation}
The associated long exact sequence yields the short exact sequence of vector spaces
\begin{equation}\label{seq:short}
0\longrightarrow \iota^*\R(X)_D\stackrel{}\longrightarrow \R(Z)_D \stackrel{\delta}\longrightarrow N_D\longrightarrow 0,
\end{equation}
where $\iota^*\R(X)_D\cong \R(X)_D / r \cdot \R(X)_{D-E}$, and $N_D=\ker r_{D*}$ is the kernel of the induced map on first sheaf cohomology
\begin{equation}\label{seq:r*}
r_{D*} : H^1(X, \Ox_X(D-E)) \longrightarrow H^1(X, \Ox_X(D)).
\end{equation}
It will sometimes be natural to put all these maps together, to get the map
\begin{equation}
r_{*} : \bigoplus_{D\in\Pic} H^1(X, \Ox_X(D-E)) \longrightarrow \bigoplus_{D\in\Pic} H^1(X, \Ox_X(D)).
\end{equation}
We have $\iota^*\R(X)= \R(Z)$ if and only if the map $r_*$ is injective. 

\subsection{\v Cech cohomology considerations}\label{subsec:cech}

Fix a totally ordered affine cover $\mathcal{U} = \{U_i\}_{i \in I}$ of~$X$. 
From~\eqref{seq:standard}, we get the double complex

\begin{equation}\label{complex}\begin{tikzcd}
{} & 0\arrow{d} & 0\arrow{d} \\
0\arrow{r} & \displaystyle\prod_{i}\Ox_X(D-E)(U_i) \arrow{r}\arrow{d}{\cdot r} &  \displaystyle\prod_{i<j}\Ox_X(D-E)(U_i \cap U_j)\arrow{r} \arrow{d}{\cdot r}& \ldots \\
0\arrow{r} & \displaystyle\prod_{i}\Ox_X(D)(U_i)\arrow{r}{d_0} \arrow{d}{\iota^*}& \displaystyle\prod_{i<j}\Ox_X(D)(U_i \cap U_j)\arrow{r} \arrow{d}{\iota^*} & \ldots \\
0\arrow{r} &\displaystyle\prod_{i}\Ox_Z(D)(U_i)\arrow{r} \arrow{d}& \displaystyle\prod_{i<j}\Ox_Z(D)(U_i \cap U_j)\arrow{r} \arrow{d} & \ldots \\
{} & 0 & 0 
\end{tikzcd}\end{equation}

\begin{lemma} Consider \v Cech $0$-cochains $(v_i) \in \prod_{i}\Ox_X(D)(U_i)$. Call a $0$-cochain $(v_i)$ compatible (with respect to $Z$) if for each $i < j$ there exists $q_{i,j} \in \Ox_X(D-E)(U_i \cap U_j)$ such that $v_i|_{U_i \cap U_j} - v_j|_{U_i \cap U_j} = q_{i,j}r$. Call compatible $0$-cochains $(v_i), (v'_i)$ equivalent, if for each $i$, there exists $q_i \in \Ox_X(D-E)(U_i)$ such that $v'_i - v_i = q_i r$. 
\begin{enumerate}
    \item[(i)] Elements $v\in \R(Z)_D$ are naturally in bijection with equivalence classes of compatible $0$-cochains
    $$(v_i)\in\prod_{i}\Ox_X(D)(U_i).$$ 
    Under this equivalence, the restriction of a section $g \in \R(X)_D$ corresponds to the \v Cech $0$-cocycle $(g|_{U_i}) \in \prod_{i}\Ox_X(D)(U_i)$. 
    \item[(ii)] Multiplication in the algebra $\R(Z)$ is compatible with multiplication of cochains: if $D,D' \in \pic$ and $v \in \R(Z)_D$, $v' \in \R(Z)_{D'}$ are represented by compatible $0$-cochains $(v_i), (v'_i)$ respectively, then $v v' \in \R(Z)_{D+D'}$ is represented by the compatible $0$-cochain $(v_i v'_i)$.
    \end{enumerate}\label{lemma_cocycle}
\end{lemma}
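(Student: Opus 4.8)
The plan is to unwind the bottom-left square of the double complex~\eqref{complex} at the level of sections, using that the cover $\mathcal{U}$ is affine. Concretely, since each $U_i$ is affine and $r\,\Ox_X(D-E)$ is quasicoherent, the restriction map $\Ox_X(D)(U_i)\to\Ox_Z(D)(U_i)$ is surjective with kernel exactly $r\,\Ox_X(D-E)(U_i)$, and likewise on the double overlaps $U_i\cap U_j$. Moreover a section $v\in\R(Z)_D=H^0(Z,\Ox_Z(D))$ is precisely a collection of local sections over the $U_i\cap Z$ agreeing on overlaps, so the whole statement can be checked open set by open set.

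For part~(i), I would construct the bijection in both directions. Given $v\in\R(Z)_D$, choose for each $i$ a lift $v_i\in\Ox_X(D)(U_i)$ of $v|_{U_i\cap Z}\in\Ox_Z(D)(U_i)$, which exists by the surjectivity above. On $U_i\cap U_j$ the sections $v_i$ and $v_j$ have the same image in $\Ox_Z(D)(U_i\cap U_j)$, hence $v_i-v_j\in r\,\Ox_X(D-E)(U_i\cap U_j)$: this is exactly the compatibility condition with some $q_{i,j}$. Two choices of lifts differ over each $U_i$ by an element of $r\,\Ox_X(D-E)(U_i)$, i.e.\ they are equivalent compatible cochains. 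Conversely, given a compatible cochain $(v_i)$, the images $\bar v_i\in\Ox_Z(D)(U_i)$ agree on overlaps (their differences, being multiples of $r$, vanish in $\Ox_Z(D)$), so they glue to a section of $\Ox_Z(D)$, i.e.\ an element of $\R(Z)_D$, and equivalent cochains glue to the same section. These two assignments are visibly mutually inverse. For the last claim, if $g\in\R(X)_D$ then $g|_{U_i\cap Z}$ lifts tautologically to $g|_{U_i}$, and the cochain $(g|_{U_i})$ satisfies the compatibility condition with all $q_{i,j}=0$; so $\iota^*g$ corresponds to the class of the honest \v Cech $0$-cocycle $(g|_{U_i})$.

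For part~(ii), I would use that the multiplication $\R(Z)_D\otimes\R(Z)_{D'}\to\R(Z)_{D+D'}$ is induced by the sheaf multiplication maps $\Ox_Z(D)\otimes\Ox_Z(D')\to\Ox_Z(D+D')$, which are the restrictions to $Z$ of the corresponding maps on $X$. Hence if $v,v'$ restrict to $\bar v_i,\bar v_i'$ over $U_i\cap Z$, then $vv'$ restricts to $\bar v_i\bar v_i'$, which is the image of $v_iv_i'\in\Ox_X(D+D')(U_i)$. It then remains to check that $(v_iv_i')$ is a compatible cochain, which follows from the identity
\[
v_iv_i'-v_jv_j' = v_i(v_i'-v_j') + (v_i-v_j)v_j' = \big(v_i q_{i,j}' + q_{i,j} v_j'\big)\, r
\]
over $U_i\cap U_j$. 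Thus $(v_iv_i')$ represents $vv'$, as claimed.

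I do not anticipate a genuine obstacle: the lemma is essentially a translation of the long exact cohomology sequence of~\eqref{seq:standard} into the \v Cech language of~\eqref{complex}. The two points requiring a little care are the surjectivity of $\Ox_X(D)(U_i)\to\Ox_Z(D)(U_i)$ together with the precise description of its kernel (which is exactly where affineness of the cover is used), and bookkeeping around the identification $\Pic X\cong\Pic Z$, so that the symbol $\Ox_Z(D)$ and the $\pic$-grading of $\R(Z)$ are unambiguous and the ring structure on $\R(Z)$ is the one restricted from the $\pic$-graded sheaf of algebras $\bigoplus_D\Ox_X(D)$.
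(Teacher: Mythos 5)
Your proposal is correct and is essentially the paper's argument written out in full: the paper's proof is the one-line observation that the columns of the double complex~\eqref{complex} are exact because the global section functor is exact on quasicoherent sheaves over the affine opens $U_i$ and $U_i\cap U_j$, which is precisely the surjectivity-plus-kernel statement you isolate and then unwind. The multiplicativity check in part~(ii) via the identity $v_iv_i'-v_jv_j'=(v_iq_{i,j}'+q_{i,j}v_j')r$ is the standard verification the paper leaves implicit.
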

\begin{proof} These statements follow from considering the first column of~\eqref{complex}, recalling that the global section functor on sheaves is exact over affine schemes. Note that a $0$-cochain $(v_i)$ restricts to a $0$-cocycle on $Z$ if and only if it is compatible.
\end{proof}

We are interested in cases when there are equivalence classes of compatible $0$-cochains $(v_i)$ that do not come from the restriction of a section $g\in \R(X)$, so that $(v_i)$ is not a $0$-cocycle.
For $D\in\Pic$, in order to find a complementary vector space to $\iota^*\R(X)_D$ in $\R(Z)_D$, we need to complete the first three steps below.
\begin{enumerate}
    \item Give an explicit description of the first sheaf cohomologies of $\Ox_X(D-E)$ and $\Ox_X(D)$ and of the map $r_{D*}$ of~\eqref{seq:r*}. 
    \item Describe the kernel $N_D=\ker r_{D*}$ as a subspace of $H^1(X, \Ox_X(D-E))$.
    \item Choose a right inverse $\epsilon\colon N_D\to \R(Z)_D$ of $\delta$ that lifts elements of $N_D$ to sections on~$Z$.
    \item Study the ring structure of the direct sum of the resulting spaces to reconstruct the whole structure of~$\R(Z)$. 
\end{enumerate}
Steps (1) and (2) have to be completed in particular cases of interest by explicit calculation. Finding a map $\epsilon$ in step (3) comes down to untangling the connecting homomorphism $\delta$. Given $s \in N_D$ for some $D \in \pic$, let 
\[(s_{i,j})\in \prod_{i<j}\Ox_X(D-E)(U_i \cap U_j)\] be a representative $1$-cocycle. Then $(r \cdot s_{i,j})$ is cohomologically trivial, thus the image of a $0$-cochain \[(v_i) \in \prod_{i}\Ox_X(D)(U_i),\] which is compatible since $v_i|_{U_i \cap U_j} - v_j|_{U_i \cap U_j} = s_{i,j}r$ for each $i < j$. This compatible $0$-cochain corresponds via Lemma~\ref{lemma_cocycle} to an element $v \in \R(Z)_D$ that satisfies $\delta v = s$. Note that the element $v$ is only well-defined up to an element in $\iota^* \R(X)_D$, since $(v_i)$ is well-defined only up to an element of $\ker d^0 = \R(X)_D$. To construct a map $\epsilon$, we need to make compatible choices so that we indeed end up with a well-defined lifting map
\[
\epsilon \colon  N_D  \to  \R(Z)_D,
\]
a right inverse to $\delta$. To complete step (4), further arguments are needed that will be based on Lemma~\ref{lemma_cocycle}(ii). 

To conclude the general discussion, let us make one further remark. Suppose that $v \in \R(Z)_D$ is represented by a compatible $0$-cochain $(v_i)$ via Lemma~\ref{lemma_cocycle}(i). For fixed $i \in I$, we have $v_i \in \Ox_{X}(U_i)(D)$, and thus $v_i = \frac{h_i}{g_i}$ for some regular functions $h_i,g_i$ on $X$ with $g_i$ not vanishing on $U_i$. Identifying $h_i$ and $g_i$ with their restrictions on $Z$, the section $g_i v_i - h_i$ is zero on $Z \cap U_i$, which is open and dense in $Z$. For each $i$, we thus obtain an equation
\[g_i v - h_i = 0\]
in $\R(Z)$. This explains the relevance of localisations of~$\R(X)$ for the problem, which is the basis of the approach taken in~\cite{lafaceetal}. 

\section{Cox rings of hypersurfaces in a product of projective spaces}
\subsection{Basics}
Fix $n \geqslant 3$ and let $X = \p^1 \times \p^n$. We use homogeneous coordinates $x_0, x_1$ and $y_0, \dots, y_n$ on $\p^1$ and $\p^n$ respectively. If $p_1, p_2$ are the projection maps to the respective factors $\p^1, \p^n$, then $\Pic X\cong \Z^2$ is generated by the pullbacks $p_1^* \Ox_{\p^1}(1)$ and $p_2^* \Ox_{\p^n}(1)$ respectively. The Cox ring of $X$ is \[\coxX=\R(X) = \C[x_0,x_1,y_0,\dots,y_n],\] with the $\Z^2$-grading given by $\deg(x_i) = (1,0)$ and $\deg(y_j) = (0,1)$. The ring \[R=\R(\p^n) = \C[y_0, \dots, y_n]\] is naturally a $\Z^2$-graded subring of~$S$, making~$S$ into a $\Z^2$-graded $R$-module.

Fix 
$n \geq 3$ and $d,e \geq 1$. Let $r \in \coxX_{(d,e)}$ and
\[Z=\left\{ r= 0\right\} \subset X\] 
be the corresponding hypersurface of bidegree $(d,e)$ in $X$. Then $Z$ is defined by the bihomogeneous section 
\begin{equation}
    r = g_0 x_0^d + g_1 x_0^{d-1} x_1 + \ldots + g_d x_1^d
\end{equation}
for some $g_0, g_1, \ldots, g_d \in R_e$. If $Z$ is nonsingular, the Lefschetz hyperplane theorem shows that the inclusion $\iota\colon Z\hookrightarrow X$ induces an isomorphism of Picard groups $\iota^*\colon\Pic X \cong \Pic Z \cong\Z^2$ that will be denoted $\Pic$. We will also consider a singular case, where we will comment on the relation between the Picard groups explicitly.

We require an ordered open cover $\mathcal{W}$ of $X$. For $i = 0,1$, let $U_i \subset \p^1$ be the standard affine open defined by $x_i \neq 0$. For $j = 0,\dots,n$, let $V_i \subset \p^n$ be the standard affine open defined by $y_j \neq 0$. Set $W_{ij} = U_i \times V_j$ for each $i,j$ and let $\mathcal{W} = \{W_{ij}\}_{i,j}$, with the lexicographic ordering. If $ij < kl$, then denote $W_{ij} \cap W_{kl}$ by $W_{ij,kl}$.

\subsection{The degeneracy locus of the second projection}
Consider the projection $p_2\colon X\to\p^n$ restricted to $Z$. This is generically a $d$-to-one cover, but it has a degeneracy locus 
\[ Y = \{ g_0=g_1=\cdots = g_d=0\}\subset\p^n,\]
over which the fibres are isomorphic to~$\p^1$. Let 
\[ I_Y = \langle g_0, g_1, \ldots g_d\rangle \lhd R = \C[y_0, \ldots, y_n]\]
be the corresponding ideal. The quotient $R/I_Y$ has a finite free $R$-module resolution of the form 
\begin{equation} \label{eq:sequence_for_T} \ldots \longrightarrow R^{\gamma}  \stackrel{C}\longrightarrow R^{\beta}\stackrel{B}\longrightarrow R^{d+1}\stackrel{A}\longrightarrow R \longrightarrow R/I_Y\longrightarrow 0,\end{equation}
where we suppressed degrees, with $A=(g_0, g_1,\ldots , g_d)$, and $B,C$ matrices of homogeneous elements of the appropriate sizes and degrees. 

Of particular interest to us is the case where $Z$ is regular with index sequence $\{i_0,\ldots, i_l\}$. It is easy to check that there are always such choices with $Z \subset X$ nonsingular. Then \eqref{eq:sequence_for_T} is the sum of the standard Koszul complex for $\{g_{i_0},\dots,g_{i_l}\}$ and a trivial complex. The locus $Y \subset \p^n$ is a complete intersection of codimension $l+1$, and the inverse image $E = p_2^{-1}(Y) \subset X$ has codimension $l$ in $Z$. In the extreme case, where $l=1$ and $r = g_0 x_0^d + g_d x_1^d$, the variety $E$ is a divisor ruled over $Y$. 

\subsection{The case $d=0$}
The case $d=0$ is somewhat degenerate, but it will be useful to include as it provides a convenient case for our induction. For $d=0$, $Z \cong \p^1 \times Z_1$, with $Z_1= \{g_0=0\} \subset \p^n$. If moreover $n \geq 4$, then clearly $\R(Z) \cong \R(\p^1\times\p^n)/\langle g_0\rangle$, compatibly with the statement of Theorem~\ref{thm:mainintro}. However, if $(n,d) = (3,0)$, then $\Pic X$ and $\Pic Z$ may not be isomorphic and Theorem~\ref{thm:mainintro} may fail to hold. Indeed, for example if $(n,d,e) = (3,0,2)$ and $g_0$ is a general quadric, then \[Z\cong \p^1\times Q \cong \p^1 \times \p^1 \times \p^1\]
with $Q\subset\p^3$ a general quadric surface, so that there is a $\Z^3$-graded isomorphism
    \[
    \R(Z) \cong \C[x_0,x_1,u_0,u_1,v_0,v_1].
    \]
However, the Veronese subring of $\R(Z)$ associated to the subgroup $\iota^* \R(X) \subset \R(Z)$, which is generated by $(1,0,0)$ and $(0,1,1)$, takes the form \eqref{presentationR}, which in this case is
 \[  \bigoplus_{D\in\iota^*\!\Pic X}\R(Z)_D \cong k[x_0,x_1,y_0,y_1,y_2,y_3]/(g_0). \]

\subsection{Describing the maps on first cohomology}

The first \v Cech cohomology groups of $X=\p^1\times\p^n$ can be written using the Künneth formula, or computed explicitly using the cover $\mathcal{W}$. This gives the following result, completing Step (1) of 
our plan sketched in~\ref{subsec:cech}. 
\begin{lemma} \label{lem:H1}\begin{enumerate} \item[(i)] We have natural $\Z^2$-graded vector space isomorphisms 
\begin{equation}\label{eq:isoH1}
    \bigoplus_{(a,b) \in \Z^2} H^1(X,\Ox_X(a,b)) \cong \frac{1}{x_0 x_1}R[x_0^{-1}, x_1^{-1}] \cong S[x_0^{-1}, x_1^{-1}]/L,
\end{equation}
where recall $R=\C[y_0,\ldots, y_n]$, $S=\C[x_0,x_1, y_0,\ldots, y_n]$, and $L$ is the vector subspace of 
$S[x_0^{-1}, x_1^{-1}]$ generated by all monomials in $x_i, y_j$ in which $x_0$ or $x_1$ appear with non-negative exponent.
\item[(ii)] The map
\[
r_{*} : \displaystyle\bigoplus_{D\in\Pic} H^1(X, \Ox_X(D-E)) \longrightarrow \displaystyle\bigoplus_{D\in\Pic} H^1(X, \Ox_X(D))
\]
can be identified with the map 
\[\begin{array}{rcl}
r_{*} : S[x_0^{-1}, x_1^{-1}]/L & \longrightarrow & S[x_0^{-1}, x_1^{-1}]/L\\
f & \mapsto &  r \cdot f \mod L
\end{array}
\]
that multiplies an element $f\in S[x_0^{-1}, x_1^{-1}]/L\cong\frac{1}{x_0 x_1}R[x_0^{-1}, x_1^{-1}]$ by $r$, and ignores those terms in which $x_0$ or $x_1$ occur with non-negative exponent. 
\end{enumerate}
\end{lemma}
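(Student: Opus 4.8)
The plan is to compute the first \v Cech cohomology of $X = \p^1 \times \p^n$ directly from the cover $\mathcal{W}$, and then identify the map $r_*$. For part (i), the cleanest route is via the Künneth formula: $H^1(X, \Ox_X(a,b))$ decomposes as $H^1(\p^1,\Ox(a)) \otimes H^0(\p^n,\Ox(b)) \oplus H^0(\p^1,\Ox(a))\otimes H^1(\p^n,\Ox(b))$. Since $n \geq 3$, we have $H^1(\p^n,\Ox(b)) = 0$ for all $b$, so only the first summand survives. Now $H^1(\p^1,\Ox(a))$ is nonzero exactly when $a \leq -2$, with the well-known description as the span of Laurent monomials $x_0^{c_0} x_1^{c_1}$ with $c_0, c_1 < 0$ and $c_0 + c_1 = a$; tensoring with $H^0(\p^n,\Ox(b)) = R_b$ gives precisely the degree-$(a,b)$ piece of $\frac{1}{x_0 x_1}R[x_0^{-1},x_1^{-1}]$. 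Summing over all $(a,b)$ yields the first isomorphism in \eqref{eq:isoH1}. The second isomorphism is then purely formal: the vector space $\frac{1}{x_0x_1}R[x_0^{-1},x_1^{-1}]$ is exactly the complement in $S[x_0^{-1},x_1^{-1}]$ of the span $L$ of monomials in which $x_0$ or $x_1$ has non-negative exponent, so the quotient map $S[x_0^{-1},x_1^{-1}] \to S[x_0^{-1},x_1^{-1}]/L$ restricts to an isomorphism on that complement. Alternatively, one can avoid Künneth entirely and run the \v Cech computation on $\mathcal{W} = \{U_i \times V_j\}$ directly, using that $\mathcal{U} = \{U_0, U_1\}$ computes cohomology on $\p^1$ and $\{V_j\}$ on $\p^n$, and tracking which intersections contribute; this is more hands-on but matches the explicit cocycle bookkeeping used later in the paper.

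For part (ii), the point is to check that the connecting multiplication-by-$r$ map on $H^1$, under the identification of part (i), is literally multiplication by $r$ followed by projection modulo $L$. This follows because the map $r_{D*}$ of \eqref{seq:r*} is induced by the sheaf map $\cdot r\colon \Ox_X(D-E) \to \Ox_X(D)$, which on \v Cech cochains for the cover $\mathcal{W}$ is simply multiplication of a Laurent-polynomial representative by $r$. Under the isomorphism $H^1(X,\Ox_X(D-E)) \cong S[x_0^{-1},x_1^{-1}]/L$ (in degree $D-E$) and similarly for $D$, multiplying a representative $f$ by $r$ lands in $S[x_0^{-1},x_1^{-1}]$, and one then passes to the quotient by $L$ — equivalently, discards all terms in which $x_0$ or $x_1$ appears with non-negative exponent. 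Assembling over all $D$ gives the stated formula for $r_*$.

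I do not expect a serious obstacle here; the statement is essentially a bookkeeping lemma. The one place that requires care is making the two isomorphisms in \eqref{eq:isoH1} genuinely \emph{natural} and \emph{compatible} with the grading and with multiplication, since these compatibilities are exactly what is needed downstream: the identification must intertwine the sheaf-theoretic $r_{D*}$ with honest Laurent-polynomial multiplication, and it must respect the $\Z^2$-grading so that degree $D$ on the cohomology side matches degree $D$ on the polynomial side (with the shift by $E = (d,e)$ built into the source). So the main work is in pinning down the isomorphism at the cochain level — choosing the standard \v Cech representatives for $H^1(\p^1,\Ox(a))$ and verifying that multiplication by $r$ on cochains corresponds to polynomial multiplication and that passing to cohomology corresponds to killing $L$ — rather than in any deep argument.
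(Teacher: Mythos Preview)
Your proposal is correct and follows essentially the same route as the paper: the K\"unneth formula (using $H^1(\p^n,\Ox(b))=0$ for $n\geq 3$) to obtain the first isomorphism, the formal identification with $S[x_0^{-1},x_1^{-1}]/L$, and then the observation that $r_*$ is given by multiplication of \v Cech cochain representatives. The paper is slightly more explicit in one respect, spelling out the specific $1$-cocycle $(s_{ij,kl})$ associated to $f$ (namely $s_{ij,kl}=f$ when $i\neq k$ and $0$ otherwise), which is worth recording since it is used in the later lifting arguments.
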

\begin{proof} This first isomorphism in~\eqref{eq:isoH1} is clear from the K\" unneth formula. This identifies an element $f \in \frac{1}{x_0 x_1}R[x_0^{-1}, x_1^{-1}]_{(a,b)}$ with class of the $1$-cocycle $(s_{ij,kl})$ representing an element of $H^1(X,\Ox_X(a,b))$, where $s_{ij,kl} = f$ if $i \neq k$ and $s_{ij,kl} = 0$ if $i = k$. The second isomorphism in~\eqref{eq:isoH1} is immediate. The description in (ii) is also clear from the explicit cocycle description. 
\end{proof}
Note that the spaces appearing in the isomorphism~\eqref{eq:isoH1} naturally have the structure of a graded $R$-module, which we will use in our subsequent arguments. 

We move on to Step (2) of our plan. Since $H^1(X,\Ox_X(a,b))$ is only non-zero for $a<0$, we will use the index $-a$ rather than $a$. For fixed $a > 0$, there is an isomorphism of $R$-modules
\begin{equation}\label{eq:isodirectsum} \begin{array}{rcl}R^{a-1}& \stackrel{\sim}\longrightarrow& \displaystyle\bigoplus_{b \in \Z} 
\frac{1}{x_0 x_1}R[x_0^{-1},x_1^{-1}]_{(-a,b)} \\
(f_1, \ldots, f_{a-1}) & \mapsto & f=\displaystyle\sum_{k=1}^{a-1} \frac{f_k}{x_0^{a-k} x_1^k}.
\end{array}\end{equation}
Using Lemma~\ref{lem:H1}(ii), we obtain the following description. 
\begin{proposition}\label{prop:ker} For $D=(-a,b)\in\Pic$, the kernel $N_{(-a,b)}=\ker r_{D*}$ can only be nonzero if $-a \leq d-2$. 
\begin{enumerate}\item[(i)]
For~$-1 \leq -a \leq d-2$, we have a graded isomorphism
\begin{equation*}
    \bigoplus_{b \in \Z} N_{(-a,b)} \cong R^{a+d-1}[e], 
\end{equation*}
where $[e]$ denotes a shift in grading so that, for example, 
$N_{(d-2,b)} \cong R_{b-e} \cdot \frac{1}{x_0 x_1}$
for $b \in \Z$. 
\item[(ii)]
For~$-a<-1$, we have a graded isomorphism
\begin{equation*}
    \bigoplus_{b \in \Z} N_{(-a,b)} \cong \ker A_a[e],
\end{equation*}
where $A_a$ is the $R$-module homomorphism
\[
\begin{array}{rcccl}
  A_a & \colon & R^{a+d-1} & \longrightarrow & R^{a-1} 
  \\  && (f_k)_{k=1}^{a+d-1} & \longmapsto &  (g_0 f_{k} + g_1 f_{k+1} + \ldots + g_d f_{k+d})_{k=1}^{a-1}
\end{array}
\]
defined by the matrix
\begin{equation*}
         \begin{pmatrix}
            g_0 & g_1 & g_2 & \cdots & g_d & 0 & 0 & \cdots \cdots & 0 & 0 & 0 & \cdots & 0 & 0\\
            0 & g_0 & g_1 & \cdots & g_{d-1} & g_d & 0 & \cdots \cdots & 0 & 0 & 0 & \cdots & 0 & 0\\
            \ & \vdots & \ & \ddots & \ & \vdots & \ & \ddots & \vdots & \ & \ & \ddots & \ & \vdots\\
            0 & 0 & 0 & \cdots & 0 & 0 & 0 & \cdots \cdots & 0 & g_0 & g_1 & \cdots & g_d & 0 \\
            0 & 0 & 0 & \cdots & 0 & 0 & 0 & \cdots \cdots & 0 & 0 & g_0 & \cdots & g_{d-1} & g_d
        \end{pmatrix}.
    \end{equation*}
\end{enumerate}
\end{proposition}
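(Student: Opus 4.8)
The plan is to combine the explicit description of $r_*$ from Lemma~\ref{lem:H1}(ii) with the parametrization in~\eqref{eq:isodirectsum}, splitting into cases according to whether the target group $H^1(X,\Ox_X(-a,b))$ vanishes. For the opening claim, note that $N_{(-a,b)}$ is by definition a subspace of $H^1(X,\Ox_X(D-E))$ with $D-E=(-a-d,b-e)$; by Lemma~\ref{lem:H1}(i) this group is nonzero only when $-a-d\leq -2$, i.e.\ $-a\leq d-2$, which is exactly the stated necessary condition.

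For part (i) I would observe that throughout the range $-1\leq -a\leq d-2$ the target $H^1(X,\Ox_X(-a,b))$ vanishes (its twist satisfies $-a\geq -1>-2$), so $r_{D*}=0$ and $N_{(-a,b)}=H^1(X,\Ox_X(-a-d,b-e))$ in its entirety. Applying~\eqref{eq:isodirectsum} with $a+d$ in place of $a$ (legitimate since $a+d\geq 2$ in this range) and recording that the source cohomology sits in internal degree $b-e$ rather than $b$ yields the graded isomorphism $\bigoplus_b N_{(-a,b)}\cong R^{a+d-1}[e]$. The displayed special case $N_{(d-2,b)}\cong R_{b-e}\cdot\frac{1}{x_0 x_1}$ then drops out because $D-E=(-2,b-e)$ and $\frac{1}{x_0 x_1}$ is the only monomial of $x$-degree $-2$ in which both exponents are negative.

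For part (ii) I would take $a\geq 2$, so both source and target are nonzero, and compute $r_*$ directly in the coordinates of~\eqref{eq:isodirectsum}. Writing a general source element as $f=\sum_{k=1}^{a+d-1}\frac{f_k}{x_0^{a+d-k}x_1^k}$ and expanding $r\cdot f=\sum_{m=0}^{d}\sum_{k}g_m f_k\,x_0^{k-m-a}x_1^{m-k}$, a monomial survives reduction modulo $L$ precisely when both $x_0$ and $x_1$ occur with negative exponent, i.e.\ when $1\leq k-m\leq a-1$. Setting $j=k-m$ and collecting, the coefficient of $\frac{1}{x_0^{a-j}x_1^j}$ becomes $\sum_{m=0}^{d}g_m f_{j+m}$ for $j=1,\dots,a-1$, which is exactly the map $A_a$ of the statement. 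Hence $r_{D*}$ is identified with $A_a$ up to the internal grading shift by $e$, and $\bigoplus_b N_{(-a,b)}\cong\ker A_a[e]$.

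The only delicate point is the index bookkeeping in part (ii): one must verify that for $1\leq j\leq a-1$ and $0\leq m\leq d$ the index $j+m$ lies in $\{1,\dots,a+d-1\}$ so that $f_{j+m}$ is defined, and that the inner sum over $m$ is genuinely not truncated before $m=d$. Both facts follow from $j\leq a-1$, but they are the steps where an off-by-one slip would distort the shape of the matrix $A_a$; everything else is a routine application of Lemma~\ref{lem:H1} and~\eqref{eq:isodirectsum}.
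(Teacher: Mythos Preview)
Your argument is correct and is exactly the computation the paper has in mind: the proposition is stated in the paper immediately after~\eqref{eq:isodirectsum} with the single sentence ``Using Lemma~\ref{lem:H1}(ii), we obtain the following description,'' and your write-up supplies precisely those details. The case split according to whether the target $H^1$ vanishes, the direct identification of $r_*$ with multiplication by $r$ modulo $L$, and the index substitution $j=k-m$ producing the matrix $A_a$ are all the intended steps.
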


The first non-trivial example of a map $A_a$ is
\begin{equation*}\begin{array}{rcccl}
    A_2 & \colon & R^{d+1} & \longrightarrow & R \\
     && (f_1, \ldots, f_{d+1}) & \mapsto & g_0 f_{1} + g_1 f_{2} + \ldots + g_d f_{d+1}
\end{array}     
\end{equation*}
which is the same as the map $A$ in the complex~\eqref{eq:sequence_for_T}, and is the first map in the Koszul complex associated with the sequence $\{g_0, \dots, g_d\}$ in $R$. Its kernel, the {\em syzygies} between the defining polynomials $\{g_0, \dots, g_d\}$, give us elements in $\R(Z)_{(-2,b)}$ for different values $b$. 

\subsection{Lifting cohomology elements to sections in the Cox ring} \label{sec:lifting} In this subsection, we describe how to perform Step~(3) from Section~\ref{subsec:cech} in our context. For $D=(-a,b)\in\Pic$, recall the standard exact sequence~\eqref{seq:short}
\[
0\longrightarrow \iota^*\R(X)_{(-a,b)}\stackrel{}\longrightarrow \R(Z)_{(-a,b)} \stackrel{\delta}\longrightarrow N_{(-a,b)}\longrightarrow 0.
\]
We need to construct a lifting map $\epsilon\colon N_{(-a,b)} \rightarrow \R(Z)_{(-a,b)}$ that is a right inverse to $\delta$. By Proposition~\ref{prop:ker}, $N_{(-a,b)}$ can only be non-zero if $-a \leq d-2$; fix such an $a$. Let $f \in N_{(-a,b)}$. Then $u = r \cdot f$ is a degree $(-a,b)$ element in $S[x_0^{-1}, x_1^{-1}]$ such that in each non-zero term, at least one of $x_0, x_1$ appears with non-negative (possibly zero) exponent. Let $u^{(0)}$ be the sum of terms in $u$ where $x_0$ has negative exponent, and let $u^{(1)}$ be the sum of terms in which $x_1$ has negative exponent. Let $u^{(2)}$ be the sum of remaining terms of $u$. Then 
\begin{equation} \label{eq:split}
u  = u^{(0)}+u^{(1)}+u^{(2)}
\end{equation}
with
\[ \begin{array}{c}
    u^{(i)} \in \Ox_X(-a,b)(W_{ij})  \mbox{ for all } j \mbox{ and } i = 0,1, \\
    u^{(2)} \in S_{(-a,b)}.
\end{array}
\]
Then the $0$-cochain $(v_{ij})$ on $X$ given by 
\begin{equation}\label{eq_nonuniquelift}
v_{ij} =
 \left\{
  \begin{array}{ll}
			 u^{(0)} + \frac{1}{2} u^{(2)} & \mbox{if } i = 0, \\
			 -u^{(1)} - \frac{1}{2} u^{(2)} & \mbox{if } i = 1, 
		\end{array}
		\right.
\end{equation}
is compatible, thus by Lemma~\ref{lemma_cocycle} gives a section $v \in \R(Z)_{(-a,b)}$. The considerations at the end of Section~\ref{subsec:cech} translate into 
\begin{proposition} \label{prop:epsilon}
    The map 
\[\begin{array}{rcccl} \epsilon& \colon &  N_{(-a,b)} & \to & \R(Z)_{(-a,b)}\\
&& f & \mapsto & v
\end{array}
\]
defines a right inverse to the connecting homomorphism $\delta\colon \R(Z)_{(-a,b)} \to N_{(-a,b)}$.
\end{proposition}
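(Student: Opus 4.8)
The plan is to verify directly that the construction in~\eqref{eq_nonuniquelift} produces a well-defined section on $Z$ with the right boundary, then to check that $\delta v = f$. First I would confirm that the $0$-cochain $(v_{ij})$ is compatible in the sense of Lemma~\ref{lemma_cocycle}(i). On an overlap $W_{ij,kl}$, the difference $v_{ij} - v_{kl}$ must be a multiple of $r$ in $\Ox_X(-a,b)(W_{ij,kl})$ with the multiplier lying in $\Ox_X(-a-d, b-e)(W_{ij,kl})$ (recalling $E = (d,e)$). Two cases arise. If $i = k$ (same $\p^1$-chart), then $v_{ij} - v_{kl} = 0$, trivially compatible. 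If $i \neq k$, say $i = 0, k = 1$, then $v_{0j} - v_{1l} = u^{(0)} + u^{(1)} + u^{(2)} = u = r\cdot f$ by~\eqref{eq:split}, and since $f \in \frac{1}{x_0x_1}R[x_0^{-1},x_1^{-1}]_{(-a,b)}$ and both $x_0 \neq 0$ and $x_1 \neq 0$ on $W_{0j,1l}$, the element $f$ restricts to a genuine regular section of $\Ox_X(-a-d,b-e)$ there. So $(v_{ij})$ is compatible and Lemma~\ref{lemma_cocycle}(i) produces a well-defined $v \in \R(Z)_{(-a,b)}$.

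Next I would compute $\delta v$. Tracing through the construction of the connecting homomorphism in Section~\ref{subsec:cech}: the compatible cochain $(v_{ij})$ has differences $v_{ij} - v_{kl} = q_{ij,kl}\, r$ with $q_{ij,kl} = 0$ when $i = k$ and $q_{ij,kl} = f$ when $i \neq k$ (by the computation just made). The class $\delta v \in N_{(-a,b)} \subset H^1(X, \Ox_X(-a-d, b-e))$ is represented by the $1$-cocycle $(q_{ij,kl})$. Under the identification of Lemma~\ref{lem:H1}(i), a cocycle that equals a fixed element on overlaps with $i \neq k$ and vanishes when $i = k$ corresponds exactly to that element of $\frac{1}{x_0x_1}R[x_0^{-1},x_1^{-1}]$; this is precisely the normalization chosen in the proof of Lemma~\ref{lem:H1}(i). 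Hence $\delta v = f$, which is the required right-inverse property.

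Two routine points remain to be dispatched. One must check that the splitting $u = u^{(0)} + u^{(1)} + u^{(2)}$ does land in the claimed sheaves: $u^{(0)}$ has $x_0$ with negative exponent in every term, but since $f \in N_{(-a,b)}$, after multiplying by $r$ every term of $u$ has $x_0$ or $x_1$ with non-negative exponent, so a term with $x_0$ negative must have $x_1$ non-negative, hence lies in $\Ox_X(-a,b)(W_{ij})$ for every $j$ once we are in the chart $x_0 \neq 0$; symmetrically for $u^{(1)}$; and $u^{(2)}$, collecting the terms where both $x_0,x_1$ are non-negative, is an honest polynomial, i.e.\ lies in $S_{(-a,b)}$. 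One also checks $\Z^2$-linearity of $f \mapsto v$: the splitting~\eqref{eq:split} and the formula~\eqref{eq_nonuniquelift} are visibly $\C$-linear in $u$, hence in $f$, and $v$ depends on $(v_{ij})$ only through its equivalence class, so $\epsilon$ is a well-defined linear map. The main obstacle, such as it is, is bookkeeping: making sure the $\frac{1}{2}u^{(2)}$ terms cancel correctly in the overlap computation (they do, because $\frac12 u^{(2)} - (-\frac12 u^{(2)}) = u^{(2)}$ contributes to the $i \neq k$ difference, while contributing $0$ to the $i = k$ difference) and that the sign conventions match the ordering on $\mathcal{W}$; there is no real mathematical difficulty once Lemmas~\ref{lemma_cocycle} and~\ref{lem:H1} are in hand.
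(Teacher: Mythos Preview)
Your argument is correct and follows exactly the approach the paper leaves implicit: it spells out the compatibility check on overlaps (trivial when $i=k$, and equal to $r\cdot f$ when $i\neq k$), then identifies the resulting \v Cech $1$-cocycle $(q_{ij,kl})$ with the element $f$ via the normalization in the proof of Lemma~\ref{lem:H1}(i), which is precisely what the paper means by ``the considerations at the end of Section~\ref{subsec:cech} translate into'' this proposition. One tiny slip: where you write $f\in \frac{1}{x_0x_1}R[x_0^{-1},x_1^{-1}]_{(-a,b)}$ the degree should be $(-a-d,\,b-e)$, though you use the correct sheaf $\Ox_X(-a-d,b-e)$ in the next clause, so the argument is unaffected.
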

Note that if $-a<0$ then necessarily $u^{(2)}=0$, and the lifting map $\epsilon$ is unique. If however $u^{(2)}\neq 0$, then 
it is not; we make a ``symmetric'' choice for our map $\epsilon$. 
\begin{remark}
    We note that the choice of $\epsilon$ is compatible with the $R$-module structures on $\bigoplus_{b\in \Z} N_{(-a,b)}$ and $\bigoplus_{b \in \Z} \R(Z)_{(-a,b)}$ for each $a \in \Z$. We can thus view $\epsilon$ as an $R$-module map from $\bigoplus_{b\in \Z} N_{(-a,b)}$ to $\bigoplus_{b \in \Z} \R(Z)_{(-a,b)}$ for each $a$, which we will use to prove the next Proposition.
\end{remark}

\subsection{Finding some sections in the Cox ring}
In this subsection, we complete Step~(3) in our procedure for all degrees $(-a,b)$ with $-1 \leq -a \leq d-2$, under the only assumption that $Z$ is nonsingular. We find $d$ new sections in the Cox ring of~$Z$ in degree~$(-1,e)$, and use these to describe $\R(Z)_{(-a,b)}$ for all degrees $(-a,b)$ with $-a \geq -1$.  

\begin{proposition}\label{prop:z_k}
\begin{enumerate} \item[(i)] There exists sections $z_1, \dots, z_d \in \R(Z)_{(-1,e)}$, linearly independent over $R$, that satisfy the $(d+1)$ equations
\begin{equation}
    x_1 z_1 + g_0 = x_1 z_{2} + g_{1} - x_0 z_{1} = \dots = x_1 z_{d} + g_{d-1} - x_0 z_{d-1} = g_d - x_0 z_d = 0.
    \label{eq:coxz}
\end{equation}
\item[(ii)]The subalgebra $\R(Z)_{z_k}$ of $\R(Z)$ generated by $x_0, x_1, y_0, \dots, y_n, z_1, \dots, z_d$ contains $\R(Z)_{(-a,b)}$ for all $-a \geqslant -1, b \in \Z$.\end{enumerate}
\label{prop:coxz1}
\end{proposition}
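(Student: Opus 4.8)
The plan is to construct the sections $z_1, \dots, z_d$ explicitly as images under the lifting map $\epsilon$ of Proposition~\ref{prop:epsilon}, applied to natural elements of $N_{(-1,e)}$, and then verify both the relations~\eqref{eq:coxz} and the spanning statement (ii) using the \v Cech description of multiplication from Lemma~\ref{lemma_cocycle}(ii).

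First I would pin down $N_{(-1,e)}$. By Proposition~\ref{prop:ker}(i), applied with $a=1$, we have $\bigoplus_b N_{(-1,b)}\cong R^{d-1}[e]$ in the relevant range, but more usefully the case $-a \leq d-2$ at the boundary $-a=-1$ gives a free $R$-module of rank $d$ once one accounts for the grading shift; concretely an element of $N_{(-1,e)}$ is a Laurent expression $f = \sum_{k=1}^{d} f_k \, x_0^{k-1} x_1^{-k}$ (up to the exact normalization forced by the isomorphism~\eqref{eq:isodirectsum} and the degree bookkeeping), with $f_k \in R_0 = \C$, such that $r\cdot f$ has no term with both $x_0$ and $x_1$ negative — which, since $-a = -1$, is automatic. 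So a basis is given by the $d$ ``monomial'' cochain classes $f^{(k)}$ corresponding to $\tfrac{1}{x_0 x_1}$-type generators, and I set $z_k := \epsilon(f^{(k)})$. Then I would compute $r \cdot f^{(k)}$, split it as $u = u^{(0)} + u^{(1)} + u^{(2)}$ as in~\eqref{eq:split} (here $u^{(2)}$ may be nonzero, so the symmetric choice of~\eqref{eq_nonuniquelift} is in play), and read off the $0$-cochain representing $z_k$. The $R$-linear independence of the $z_k$ follows because $\delta(z_k) = f^{(k)}$ and the $f^{(k)}$ are $R$-linearly independent in $N_{(-1,e)}$ (a nontrivial $R$-combination landing in $\iota^*\R(X)_{(-1,\ast)} = 0$ would force the combination to be zero already in $N$).

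Next, the relations~\eqref{eq:coxz}. Each expression like $x_1 z_1 + g_0$ is an element of $\R(Z)_{(0,e)}$; by Lemma~\ref{lemma_cocycle}(ii) it is represented by the $0$-cochain $(x_1 v_{ij} + g_0|_{W_{ij}})$ where $(v_{ij})$ represents $z_1$. The point is that multiplying the lift $(v_{ij})$ of $z_1$ by $x_1$ clears the $x_1^{-1}$ denominators in exactly the right way so that the resulting cochain is a genuine $0$-\emph{cocycle} — i.e. constant across the cover — and equals $-g_0$ on the nose (this is where the precise form $r = g_0 x_0^d + \dots + g_d x_1^d$ and the telescoping structure of~\eqref{eq:coxz} gets used). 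Running through the indices $k = 1, \dots, d$ plus the tail relation $g_d - x_0 z_d = 0$, each identity reduces to a short bookkeeping computation with the splitting $u^{(0)} + u^{(1)} + u^{(2)}$, using that $u = r f^{(k)}$ differs between consecutive $k$ by multiplication by $x_0/x_1$. The main obstacle in this step, and in the whole proposition, is handling the $u^{(2)}$ terms and the factor $\tfrac12$ in~\eqref{eq_nonuniquelift} carefully: the symmetric choice means the relations only hold after combining the $i=0$ and $i=1$ parts of the cochain, so one must check the cocycle condition globally rather than chart-by-chart.

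Finally, for (ii) I would argue by downward induction on $b$ within each fixed $-a \in \{-1, 0, 1, \dots\}$ — but in fact the cleanest route is: for $-a \geq 0$ the map $r_{(-a,b)\ast}$ has zero kernel by Proposition~\ref{prop:ker}, so $\R(Z)_{(-a,b)} = \iota^*\R(X)_{(-a,b)}$ is spanned by monomials in $x_0, x_1, y_j$ alone; and for $-a = -1$, the short exact sequence~\eqref{seq:short} together with the surjectivity of $\delta$ and the fact that $\epsilon$ is $R$-linear (see the Remark after Proposition~\ref{prop:epsilon}) shows $\R(Z)_{(-1,b)} = \iota^*\R(X)_{(-1,b)} + \sum_k R_{b-e}\cdot z_k$, and every term on the right lies in the subalgebra generated by $x_0, x_1, y_0, \dots, y_n, z_1, \dots, z_d$ (note $\iota^*\R(X)_{(-1,b)} = 0$ anyway since $\R(X)$ has no elements of negative $x$-degree, so actually $\R(Z)_{(-1,b)} = \sum_k R_{b-e} z_k$). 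Assembling these two cases gives exactly the claim. The only thing to be careful about here is matching the grading shift $[e]$ in Proposition~\ref{prop:ker}(i) so that $N_{(-1,b)} \cong R_{b-e}^{\oplus d}$ and hence the $z_k$, of degree $(-1,e)$, multiplied by $R_{b-e}$, exhaust the new sections in degree $(-1,b)$.
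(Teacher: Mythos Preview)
Your argument for part (i) is essentially the paper's, modulo two small slips. First, your explicit Laurent form $\sum_k f_k x_0^{k-1}x_1^{-k}$ has $x$-degree $-1$, whereas $N_{(-1,e)}$ sits inside $H^1(X,\Ox_X(-1-d,0))$, so the basis elements are $f_k = x_0^{-(d+1-k)}x_1^{-k}$ as in the paper. Second, since the first degree is $-1<0$, the term $u^{(2)}$ in the splitting~\eqref{eq:split} is automatically zero, so your worry about the $\tfrac12$ is misplaced here; the relations~\eqref{eq:coxz} can indeed be verified chart-by-chart, exactly as the paper does by checking $x_1 z_{k+1,ij} + g_k - x_0 z_{k,ij} = 0$ on each $W_{ij}$.

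There is, however, a genuine gap in your part (ii). You assert that for $-a\geq 0$ the map $r_{(-a,b)*}$ has zero kernel, but this contradicts Proposition~\ref{prop:ker}(i): for every $0 \leq -a \leq d-2$ one has $\bigoplus_b N_{(-a,b)} \cong R^{a+d-1}[e]$, which is nonzero (of rank between $1$ and $d-1$). So for $d\geq 2$ there are new sections in degrees $(0,b),(1,b),\dots,(d-2,b)$ not coming from $\iota^*\R(X)$, and you have not shown these lie in the subalgebra generated by $x_i,y_j,z_k$. The paper handles this range by taking the natural $R$-basis $f_{-a,k}=x_0^{-(a+d-k)}x_1^{-k}$ of $\bigoplus_b N_{(-a,b)}$, setting $v_{-a,k}=\epsilon(f_{-a,k})$, and then comparing $0$-cochains to obtain the explicit identity
\[
v_{-a,k} \;=\; z_k\, x_0^{1-a} \;-\; \tfrac{1}{2}\sum_{0\leq c-k\leq -a} g_c\, x_0^{-a-c+k} x_1^{c-k},
\]
which exhibits each $v_{-a,k}$ in the subalgebra. (This is precisely where the $\tfrac12$ from the symmetric lift~\eqref{eq_nonuniquelift} actually matters, since now $-a\geq 0$ and $u^{(2)}$ need not vanish.) Without an argument along these lines, your proof of (ii) is incomplete for all $d\geq 2$.
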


\begin{proof}
By Proposition~\ref{prop:ker}(i) we have
\begin{equation}\label{kerdecomp}
\bigoplus_{b\in \Z} N_{(-1,b)} = \bigoplus_{k = 1}^d R[e]\cdot \frac{1}{x_0^{d-k+1} x_1^k} \cong R[e]^d.
\end{equation}
For $1 \leq k \leq d$, let $f_k = \frac{1}{x_0^{d-k+1}x_1^k}$ be the $k^{\text{th}}$ basis vector in the decomposition (\ref{kerdecomp}). The short exact sequence~\eqref{seq:short} implies $\delta$ and $\epsilon$ are inverse isomorphisms for $a = -1$. Thus setting $z_k = \epsilon f_k$ gives us $d$ $R$-linearly independent sections $z_1, \dots, z_d \in \R(Z)_{(-1,e)}$. Following the definition of $\epsilon$, each $z_k$ is given by the following $0$-cochain $(z_{k,ij})$ on $X$:

\begin{equation*}
z_{k,ij} =
 \left\{
  \begin{array}{ll}
			 \sum_{c-k \geqslant 0} g_c \frac{x_1^{c-k}}{x_0^{c-k+1}} & \mbox{if } i = 0, \\
			 -\sum_{c-k < 0} g_c \frac{x_0^{k-c-1}}{x_1^{k-c}} & \mbox{if } i = 1. \\
		\end{array}
		\right.
\end{equation*}
Since for any choice of $(i,j)$ we have $x_1 z_{k+1,ij} + g_k - x_0 z_{k,ij} = 0$, we conclude that, for $k = 0,1,\dots,d$,
\begin{equation*}
    x_1 z_{k+1} + g_k - x_0 z_{k} = 0,
\end{equation*}
where the undefined variables $z_0$ and $z_{d+1}$ are set to be zero. This proves Proposition \ref{prop:coxz1}(i).

For part (ii), note that $\iota^*\R(X)$ is generated as an algebra by the sections $x_0,x_1,y_0,\dots,y_n$ since this is true for $\R(X)$. To prove the statement it thus suffices to prove that each section in $\epsilon(N_{(-a,b)})$, for each $-a \geq -1$, is in $\R(Z)_{z_k}$. Part (i) covers the $a=-1$ case. Proposition~\ref{prop:ker} gives that $N_{(-a,b)}= 0$ for $-a > d-2$ and that
\begin{equation*}
\bigoplus_{b\in \Z} N_{(-a,b)} = \bigoplus_{k = 1}^{a+d-1} R[e]\cdot \frac{1}{x_0^{a+d-k} x_1^k} \cong R[e]^{a+d-1}.
\end{equation*}
for $0 \leq -a \leq d-2$. Fix $a$ in this latter range and for each $1 \leq k \leq a+d-1$ set $f_{-a,k} = \frac{1}{x_0^{a+d-k}x_1^k}$. These $f_{-a,k}$ give an $R$-basis for $\bigoplus_{b \in \Z} N_{(-a,b)}$. Since $\delta$ and $\epsilon$ are $R$-module maps, we only need to prove that each $v_{-a,k} = \epsilon f_{-a,k}$ is in $\R(Z)_{z_k}$ to conclude our proof. Computing the $0$-cochains associated to the $v_{-a,k}$ and comparing to the $0$-cochains associated to the generators of $\R(Z)_{z_k}$, we find for example the equations
\[
v_{-a,k} = z_k x_0^{1-a} - \frac{1}{2} \sum_{0 \leq c-k \leq -a} g_c x_0^{-a-c+k} x_1^{c-k}
\]
for each $0 \leq -a \leq d-2$ and $1 \leq k \leq a+d-1$. Then each $v_{-a,k}$ is in $\R(Z)_{z_k}$ and we are done.
\end{proof}

We deduce

\begin{corollary} \label{cor:partialdescription} Let
\[T = \C[X_0,X_1,Y_0,\dots,Y_n,Z_1,\dots,Z_d]\]
be the $\Z^2$-graded $\C$-algebra with the generators having degrees $(1,0)$, $(0,1)$ and $(-1,e)$, respectively. Let $I\lhd T$ be the ideal 
\[ I = \langle X_1 Z_1 + g_0(Y_i), X_1 Z_2 + g_1(Y_i) - X_0 Z_1,\ldots, g_d(Y_i) - X_0 Z_d\rangle\]
with generators corresponding to the $d+1$ equations in (\ref{eq:coxz}).
Then the algebra homomorphism 
\[ \phi : T \rightarrow \R(Z)
\]
defined by $X_i\mapsto x_i$, $Y_i\mapsto y_i$, $Z_i\mapsto z_i$ has kernel $K = \ker \phi$ containing~$I$, inducing isomorphisms $(T/I)_{(a,b)}\cong \R(Z)_{(a,b)}$ whenever $a \geqslant -1$.
\end{corollary}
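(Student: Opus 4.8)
The plan is to dispose of the two easy assertions—$I\subseteq K$, and surjectivity of $(T/I)_{(a,b)}\to\R(Z)_{(a,b)}$ for $a\geq-1$—directly from Proposition~\ref{prop:z_k}, and then to deduce injectivity in those degrees by a dimension count: I will produce a spanning set of $(T/I)_{(a,b)}$ of cardinality at most $\dim_\C\R(Z)_{(a,b)}$, so that surjectivity forces $\phi$ to be an isomorphism there.

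First, $I\subseteq K$ is immediate, since under $\phi$ the $d+1$ generators of $I$ become the left-hand sides of the equations~\eqref{eq:coxz}, which vanish in $\R(Z)$ by Proposition~\ref{prop:z_k}(i). Surjectivity in degrees $a\geq-1$ is exactly Proposition~\ref{prop:z_k}(ii), the image of $\phi$ being the subalgebra generated by $x_0,x_1,y_0,\dots,y_n,z_1,\dots,z_d$. For the count, the short exact sequence~\eqref{seq:short} together with Proposition~\ref{prop:ker} gives, for $a\geq-1$,
\[
\dim_\C\R(Z)_{(a,b)}=\dim_\C\iota^*\R(X)_{(a,b)}+\max(0,\,d-1-a)\cdot\dim_\C R_{b-e}.
\]
So it suffices to prove that for $a\geq-1$ every monomial of $T$ of bidegree $(a,b)$ is congruent modulo $I$ to a $\C$-linear combination of the $Z$-free monomials of bidegree $(a,b)$ (those in $X_0,X_1,Y_0,\dots,Y_n$) together with the monomials $Z_kX_0^{a+1}\nu$, where $1\leq k\leq d-1-a$ and $\nu$ runs over the degree-$(b-e)$ monomials in $Y_0,\dots,Y_n$ (an empty collection when $a\geq d-1$). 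Indeed, the images of the $Z$-free monomials span $\iota^*\R(X)_{(a,b)}$, so such a spanning set has at most $\dim_\C\iota^*\R(X)_{(a,b)}+\max(0,d-1-a)\dim_\C R_{b-e}=\dim_\C\R(Z)_{(a,b)}$ elements, and combined with surjectivity this finishes the argument.

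The reduction uses, besides the defining relations $R_j:=X_1Z_{j+1}+g_j-X_0Z_j$ of $I$ (where $Z_0=Z_{d+1}=0$), a handful of their consequences. Telescoping $\sum_{j=0}^{k-1}X_0^{k-1-j}X_1^jR_j$ and $\sum_{j=k}^{d}X_0^{d-j}X_1^{j-k}R_j$ gives, modulo $I$,
\[
X_1^kZ_k\equiv-\sum_{j=0}^{k-1}g_jX_0^{k-1-j}X_1^j,\qquad
X_0^{d-k+1}Z_k\equiv\sum_{j=k}^{d}g_jX_0^{d-j}X_1^{j-k}\qquad(1\leq k\leq d),
\]
while $\sum_{j=0}^{d}X_0^{d-j}X_1^jR_j$ recovers $r\in I$ and $Z_jR_k-Z_kR_j$ gives the ``syzygy'' relations $X_1(Z_jZ_{k+1}-Z_kZ_{j+1})\equiv g_jZ_k-g_kZ_j$. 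One then argues by induction on the $Z$-degree $N$ of a monomial, written $X_0^{p_0}X_1^{p_1}M$ with $M$ a monomial in $Y_0,\dots,Y_n,Z_1,\dots,Z_d$, using that bidegree $(a,b)$ forces $p_0+p_1=a+N$. For $a\geq d-1$ and $N\geq1$ one has $p_0+p_1\geq d$, so some index $k$ present in $M$ has $X_1^k$ or $X_0^{d-k+1}$ dividing the monomial, and the corresponding telescoped identity lowers $N$; hence $(T/I)_{(a,b)}$ is spanned by $Z$-free monomials. For $-1\leq a\leq d-2$: a monomial with $N\leq1$ is brought to the desired form by pushing $X_1$'s down through $X_1Z_k\equiv X_0Z_{k-1}-g_{k-1}$, ending either in a $Z$-free monomial (when the index reaches $1$) or on some $Z_{k'}X_0^{a+1}\nu$, with the case $k'\geq d-a$ absorbed into the $Z$-free span by the second telescoped identity; and a monomial with $N\geq2$ is treated by using the syzygy relations to spread apart the smallest and largest present $Z$-indices—each step lowers the smallest index $k_{\min}$ by one, after first converting an $X_0$ into an $X_1$ via $X_0Z_k\equiv X_1Z_{k+1}+g_k$ when $p_1=0$—which either lowers $N$ once an index reaches $1$ or $d$, or eventually makes the first telescoped identity apply; the integer $k_{\min}-p_1$ strictly decreases along the way, so the process terminates.

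I expect the last case ($N\geq2$ with $-1\leq a\leq d-2$) to be the main obstacle: there the two ``clean'' telescoped identities need not apply directly, and one must carefully interleave the syzygy relations with the $X_0\leftrightarrow X_1$ conversions, checking that a well-founded measure strictly decreases at each step and that one lands in a combination of the claimed spanning monomials. Each manipulation is elementary, but marshalling the case analysis and the termination bookkeeping is where the real effort goes; this is the part one would naturally isolate.
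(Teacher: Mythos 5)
Your proposal is correct and, at its core, follows the same strategy as the paper's proof: both rest on (a) the observations that $I\subseteq K$ and that $r'=g_0X_0^d+\dots+g_dX_1^d$ lies in $I$, (b) a reduction of every monomial of $T$ of bidegree $(a,b)$ with $a\ge -1$ modulo $I$ to the normal form ``$Z$-free polynomial plus a $\C[Y_j]$-combination of $X_0^{a+1}Z_1,\dots,X_0^{a+1}Z_{d-a-1}$'', and (c) a linear-algebra step identifying the reduced classes with $\R(Z)_{(a,b)}$. You differ only in (c): the paper applies $\phi$ to the normal form and invokes the $R$-linear independence of $q,v_{a,1},\dots,v_{a,d-a-1}$ from the proof of Proposition~\ref{prop:coxz1}, whereas you count dimensions via the exact sequence~\eqref{seq:short} and Proposition~\ref{prop:ker}; these carry the same information. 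Two remarks. First, your sentence that the proposed spanning set ``has at most $\dim\iota^*\R(X)_{(a,b)}+\max(0,d-1-a)\dim R_{b-e}$ elements'' is not literally true: the $Z$-free monomials of bidegree $(a,b)$ number $\dim\C[X_i,Y_j]_{(a,b)}$, which exceeds $\dim\iota^*\R(X)_{(a,b)}$ once $a\ge d$ and $b\ge e$. What you need, and what does hold, is that the \emph{span} of these monomials in $(T/I)_{(a,b)}$ has dimension at most $\dim\bigl(\C[X_i,Y_j]/(r')\bigr)_{(a,b)}=\dim\iota^*\R(X)_{(a,b)}$, precisely because $r'\in I$; since you derive $r'\in I$ from the telescoping sum, this is a one-line repair, but it is the point at which $r'\in I$ is indispensable to your count. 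Second, the monomial reduction you flag as the main obstacle (the case of $Z$-degree $N\ge 2$ with $-1\le a\le d-2$) does go through as sketched: the relations $X_1(Z_jZ_{k+1}-Z_kZ_{j+1})\equiv g_jZ_k-g_kZ_j$ preserve the factor $X_1$ while pushing the extreme indices outwards, so after at most $\min(k_{\min},\,d+1-k_{\max})$ steps the leading term acquires a factor $Z_0$ or $Z_{d+1}$ and vanishes, leaving only terms of lower $Z$-degree. The paper asserts this reduction without detail, so your write-up is, on this point, the more complete of the two.
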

\begin{proof}
By Proposition \ref{prop:coxz1}, the ideal $I$ is contained in $K = \ker \phi$. To prove the corollary, it suffices to show that $(K/I)_{a,b} = 0$ for $a \geqslant -1$.
Firstly, let $r' = g_0(Y_j)X_0^d + \dots + g_d(Y_j)X_1^d$, which maps via $\phi$ onto the defining equation $r$ of $Z$, so that $r' \in K$. Since
\begin{align*}
    r = &X_0^d \left(X_1 Z_1 + g_0(Y_j)\right) + X_0^{d-1} X_1 \left(X_1 Z_2 + g_1(Y_j) - X_0 Z_1\right) + \\ &\dots + X_0 X_1^{d-1} \left(X_1 Z_{d-1} + g_{d-1}(Y_j) - X_0 Z_d\right) + X_1^d \left(g_d(Y_j) - X_0 Z_d\right),
\end{align*}
we have $r' \in I$. Let $f = f(X_i,Y_j,Z_k)\in K$ be a homogeneous degree $(a,b)$ polynomial, for $a \geqslant -1$. We prove that the image $\overline{f}$ of $f$ in $K/I$ is zero by considering cases in $a$. Note that any term in $f$ is a monomial in the $Y_j$ multiplied by a monomial $M = X_0^{\alpha_0} X_1^{\alpha_1} Z_1^{\beta_1}\dots Z_d^{\beta_d}$ such that $\alpha_0 + \alpha_1 - \beta_1 - \cdots - \beta_d = a$.

If $a > d-2$, the generating equations of $I$ allow us to rewrite such an $M$ purely as a polynomial in the $X_i$ and $Y_j$ modulo $I$. Without changing the class $\overline{f}$, we replace $f$ with a polynomial $f(X_i,Y_j)$ independent of the $Z_k$. Since $\R(Z)_{(a,b)} = \C[x_i,y_j]_{(a,b)} / r\cdot \C[x_i,y_j]_{(a-d,b-e)}$ by Proposition \ref{prop:ker}, that $f \in K$ implies that $r$ divides $f(x_i,y_j)$ in $\C[x_i,y_j]$, so that $r'$ divides $f$ and thus $f \in I$ and $\overline{f} = 0$.

If $-1 \leq a \leqslant d-2$, the procedure is similar. The generating equations of $I$ allow us to rewrite $M$ as a $\C[Y_j]$-linear combination of $X_0^{a+1}Z_1,\dots,X_0^{a+1}Z_{d-a-1}$ plus a polynomial independent of the $Z_k$ modulo $I$. We can thus replace $f$, without changing $\overline{f}$, with a polynomial of the form
\begin{equation*}
    f(X_i,Y_j,Z_k) = f_1(Y_j)X_0^{a+1} Z_1 + \dots + f_{d-a-1}(Y_j)X_0^{a+1}Z_{d-a-1} + p(X_i,Y_j).
\end{equation*}
Following the proof of Proposition \ref{prop:coxz1}, we have
\begin{equation*}
    \phi(f) = f_1(y_j) v_{a,1} + \dots + f_{d-a-1}(y_j) v_{a,d-a-1} + q(x_i,y_j)
\end{equation*}
for some $q \in \C[x_i,y_j]_{(a,b)}\cong\iota^*\R(X)_{(a,b)}$. But $q, v_{a,1}, \dots, v_{a,d-a-1}$ are $R$-linearly independent, and so $\phi(f) = 0$ implies $f_1= \dots = f_{d-a-1} = 0$, and in turn $q = p = 0$. Then $\bar f=0$ as required.
\end{proof}

Proposition \ref{prop:coxz1}(ii) says that any Cox ring generators other than $x_i, y_j, z_k$ must be in degree $(a,b)$ with $a \leq -2$. Corollary \ref{cor:partialdescription} tells us that any new relations between generators must also be in degree $(a,b)$ with $a \leq -2$.

\subsection{Multiplying sections}

Proposition~\ref{prop:coxz1} and Corollary~\ref{cor:partialdescription} give a full picture of $\R(Z)$ in degrees $(-a,b)$ with $-a \geq -1$, for an arbitrary set of defining polynomials $\{g_c\}$ as long as $Z$ is non-singular. The computation of the remaining sections was reduced in Proposition~\ref{prop:ker} to the description of the kernels of the maps $A_a$. For general $\{g_c\}$, we are unable to give a full description; even the structure of $\ker A_2$ is hard to describe explicitly in general. In this section, we prepare the ground for further computations by utilising the identifications of each $\ker A_a$ with a submodule of $R^{a+d-1}$ to interpret multiplication by $x_0, x_1$ as well as $z_1,\dots, z_d$ in a succinct way. 

First, consider the multiplication map
\[ \cdot x_i \colon \R(Z)_{(-a,b)} \to \R(Z)_{(-a+1,b)}. 
\]
Composing this map with our lifting map $\epsilon$ on the right, and the projection map $\delta$ on the left, we obtain a map
\[\tilde x_i : \ker A_{a} \rightarrow \ker A_{a-1}.\]
\begin{proposition}
    Suppose that $-a \leqslant -2$. For $i=0,1$, the map $\tilde x_i : \ker A_{a} \rightarrow \ker A_{a-1}$ truncates the $(a+d-1)$-tuple $(f_m)_{m=1}^{a+d-1}$ on the right, respectively on the left, by one term.
    \label{xmult}
\end{proposition}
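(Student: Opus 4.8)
The plan is to trace through the construction of the induced map $\tilde x_i$ explicitly, using the cocycle description of the lifting map $\epsilon$ from Section~\ref{sec:lifting}. Fix $-a \le -2$ and an element $f = (f_m)_{m=1}^{a+d-1} \in \ker A_a$, which via the isomorphism~\eqref{eq:isodirectsum} corresponds to $\phi = \sum_{k=1}^{a+d-1} f_k x_0^{-(a+d-k)} x_1^{-k}$ in $N_{(-a-d+1,b)}$ — wait, more precisely to the relevant element of $\bigoplus_b N_{(-a,b)}$ sitting inside $\frac{1}{x_0x_1}R[x_0^{-1},x_1^{-1}]$. By Proposition~\ref{prop:epsilon}, since $-a<0$ we have $u^{(2)}=0$ and $\epsilon$ is the \emph{unique} right inverse to $\delta$; the section $\epsilon(f) \in \R(Z)_{(-a,b)}$ is represented by the $0$-cochain $(v_{ij})$ with $v_{0j} = u^{(0)}$, $v_{1j} = -u^{(1)}$, where $u = r\cdot\phi$ and $u^{(0)}, u^{(1)}$ are its negative-$x_0$-exponent and negative-$x_1$-exponent parts.

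First I would compute $\delta(x_i \cdot \epsilon(f))$. By Lemma~\ref{lemma_cocycle}(ii), the product $x_i \cdot \epsilon(f)$ is represented by the $0$-cochain $(x_i v_{ij})$; applying $\delta$ amounts to forming the difference $x_i v_{0j} - x_i v_{1j'}$ across the $x_0 \leftrightarrow x_1$ wall and reading off the resulting class in $N_{(-a+1,b)} \subset \frac1{x_0x_1}R[x_0^{-1},x_1^{-1}]$. Since $v_{0j}-v_{1j'} = u^{(0)}+u^{(1)} = u - u^{(2)} = r\cdot\phi$ (as $u^{(2)}=0$ here), multiplying by $x_i$ gives $x_i \cdot r \cdot \phi$, and the class in cohomology is obtained by discarding terms in which $x_0$ or $x_1$ appears with non-negative exponent — exactly the map $r_*$ of Lemma~\ref{lem:H1}(ii) applied to $x_i\phi$. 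So the composite $\delta \circ (\cdot x_i) \circ \epsilon$ sends $\phi \mapsto x_i\phi \bmod L$, i.e. it is simply multiplication by $x_i$ on $\frac1{x_0x_1}R[x_0^{-1},x_1^{-1}]/L$, restricted to the relevant kernels. Then I would just translate "multiply by $x_i$ and truncate the non-negative-exponent terms" through the explicit basis $f_k \leftrightarrow x_0^{-(a+d-k)}x_1^{-k}$ of~\eqref{eq:isodirectsum}: multiplying $\phi$ by $x_0$ turns $x_0^{-(a+d-k)}x_1^{-k}$ into $x_0^{-(a+d-1-k)}x_1^{-k}$, which is a valid basis vector for $N_{(-a+1,b)}$ precisely when $a+d-1-k \ge 1$, i.e.\ $k \le a+d-2$, and the $k=a+d-1$ term acquires exponent $0$ in $x_0$ and dies — this is exactly truncation of $(f_m)$ on the right by one term. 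Multiplication by $x_1$ is the mirror statement, killing the $k=1$ term and truncating on the left. The fact that the result indeed lands in $\ker A_{a-1}$ is automatic because $\delta$ lands in $N_{(-a+1,b)}$, which Proposition~\ref{prop:ker}(ii) identifies with $\ker A_{a-1}[e]$; one should double-check the index bookkeeping matches Proposition~\ref{prop:ker}, i.e.\ that $\ker A_{a-1} \subset R^{(a-1)+d-1} = R^{a+d-2}$ as expected for a truncation of something in $R^{a+d-1}$.

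The main obstacle — really the only point requiring care rather than calculation — is confirming that $u^{(2)}=0$ throughout, so that the identity $v_{0j}-v_{1j'} = r\phi$ holds on the nose and no correction terms from the "symmetric choice" in~\eqref{eq_nonuniquelift} intrude. This is guaranteed by the hypothesis $-a \le -2$ (so that $-a+1 \le -1 < 0$ as well, keeping us in the range where Proposition~\ref{prop:epsilon} gives $u^{(2)}=0$ for both $\epsilon(f)$ and its image). After that, the argument is a bookkeeping check that discarding non-negative-exponent monomials after multiplying by $x_i$ is the same as deleting the outermost coordinate of the tuple; I expect this to be entirely routine, and it can be stated as such without writing out the monomials in full.
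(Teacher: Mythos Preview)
Your proposal is correct, and it takes a slightly different route from the paper's proof. The paper writes out explicit formulas for the $0$-cochain $(w_{ij})$ representing $\epsilon(f)$, does the same for the truncated element ${}^-f$, and then observes directly that $x_0 w_{0j} = {}^-w_{0j}$ on the dense chart $Z\cap W_{0j}$; hence $x_0\cdot\epsilon(f) = \epsilon({}^-f)$ as sections, and applying $\delta$ gives the claim. You instead compute the composite $\delta\circ(\cdot x_i)\circ\epsilon$ in one step, observing that the relevant $1$-cocycle is $x_i\phi$ and that reducing $x_i\phi$ modulo $L$ via Lemma~\ref{lem:H1} is exactly truncation in the basis~\eqref{eq:isodirectsum}. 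Your argument is cleaner in that it avoids the explicit cochain formulas; the paper's argument is more hands-on and in fact proves the slightly stronger identity $x_i\cdot\epsilon(f)=\epsilon(\text{truncation})$ at the level of sections, not merely after applying~$\delta$.

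Two small remarks. First, your appeal to ``the map $r_*$ of Lemma~\ref{lem:H1}(ii) applied to $x_i\phi$'' is misworded: $r_*$ is multiplication by $r$ followed by reduction mod $L$, whereas here you only want the reduction mod $L$ step. Second, your worry about $u^{(2)}$ is unnecessary: from~\eqref{eq_nonuniquelift} one has $v_{0j}-v_{1j'} = u^{(0)}+u^{(1)}+u^{(2)} = u = r\phi$ regardless of whether $u^{(2)}$ vanishes, so the symmetric choice never intrudes into the difference. Neither point affects the validity of your argument.
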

\begin{proof}
Fix $-a \leqslant -2, b \in \Z$ and let $(f_m)_{m=1}^{a+d-1} \in \ker A_a$,
identified with $f = \sum_{m=1}^{a+d-1} \frac{f_m}{x_0^{a+d-m} x_1^m}$.
Using Section~\ref{sec:lifting}, we obtain the lifting $w=\epsilon f$ defined by the degree $(-a,b)$ $0$-cochain $(w_{ij})$ given by
\begin{eqnarray*}
w_{0j} & = & f_1 g_d \frac{x_1^{d-1}}{x_0^{a+d-1}} + (f_1 g_{d-1} + f_2 g_d)\frac{x_1^{d-2}}{x_0^{a+d-2}} + \dots + (f_1 g_1 + \dots + f_d g_d) \frac{1}{x_0^a}, \\
w_{1j} & =  &  -f_{a+d-1} g_0 \frac{x_0^{d-1}}{x_1^{a+d-1}} - (f_{a+d-2} g_{0} + f_{a+d-1} g_1)\frac{x_0^{d-2}}{x_1^{a+d-2}} - \dots - (f_a g_0 + \dots + f_{a+d-1} g_{d-1}) \frac{1}{x_1^a}.
\end{eqnarray*}
If $^-f$ is identified with the element of $\ker A_{a-1}$ obtained from truncating $(f_m)_{m=1}^{a+d-1}$ on the right, namely then $(f_m)_{m=1}^{a+d-2}$, then repeating this procedure for $^-w = \epsilon (^-f)$, we notice that $x_0 w_{0j} = ^-w_{0j}$ for each $j$, thus $x_0 w$ and $^-w$ agree on the dense open sets $Z \cap W_{0j}$ and therefore $x_0 w = ^-w$. If $f^-$ is the truncation on the left with corresponding section $w^- = \epsilon (f^-)$, then similarly $x_1 w_{1j}$ and $w^-_{1j}$ agree on the sets $Z \cap W_{1j}$ thus also $x_1 w = w^-$. 
\end{proof}

We now turn our attention to multiplying by $z_1, \dots, z_d$. 

\begin{proposition}
    Suppose that $-a \leqslant -1$. For each $k = 1,\dots,d$, the map $\tilde z_k : \ker A_{a} \rightarrow \ker A_{a+1}$, corresponding to multiplication by $z_k$, is given as follows. Suppose that $(f_m)_{m=1}^{a+d-1} \in \ker A_a$. Then $\tilde z_k (f_m)_{m=1}^{a+d-1}$ is given by $(f'_m)_{m=1}^{a+d}$, where
    \begin{align*}
        f'_m =& f_m g_k + \dots + f_{m+d-k} g_d, \quad m = 1,\dots,a+k-1, \\
        f'_m =& -f_{m-k} g_0 - \dots - f_{m-1} g_{k-1}, \quad m = k+1,\dots, a+d.
    \end{align*}
    If $-a = -1$, then this is specifies all $f'_m$. If $-a < -1$ then, for $m = k+1, \dots, a+k-1$, the two given definitions of $f'_m$ agree, as $(f_m)_{m=1}^{a+d-1} \in \ker A_a$.
    \label{zmult}
\end{proposition}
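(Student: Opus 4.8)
The map in question is $\tilde z_k=\delta\circ(\,\cdot z_k)\circ\epsilon$, the composition of the lifting map $\epsilon$ of Section~\ref{sec:lifting}, multiplication by the section $z_k\in\R(Z)_{(-1,e)}$, and the connecting homomorphism $\delta$; via the identifications of Proposition~\ref{prop:ker} it visibly carries $\ker A_a$ to $\ker A_{a+1}$. I would compute it directly on \v Cech cochains, in the style of the proofs of Propositions~\ref{prop:coxz1} and~\ref{xmult}. Fix $-a\le -1$, $b\in\Z$, and $f\in\ker A_a$, identified via~\eqref{eq:isodirectsum} with $f=\sum_{m=1}^{a+d-1}\frac{f_m}{x_0^{a+d-m}x_1^m}$, and set $w=\epsilon f$. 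Recall from those proofs that, writing $f_k=x_0^{-(d-k+1)}x_1^{-k}$ and using the splitting notation $u=u^{(0)}+u^{(1)}$ of Section~\ref{sec:lifting} (no $u^{(2)}$-term appears, since $-a<0$), the section $z_k$ is represented by the $0$-cochain with $j$-independent chart components $z_{k,0j}=(r\cdot f_k)^{(0)}=\sum_{c\ge k}g_c\frac{x_1^{c-k}}{x_0^{c-k+1}}$ and $z_{k,1j}=-(r\cdot f_k)^{(1)}$, and $w$ by the $0$-cochain with chart components $w_{0j}=(r\cdot f)^{(0)}$ and $w_{1j}=-(r\cdot f)^{(1)}$. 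By Lemma~\ref{lemma_cocycle}(ii), $z_k\cdot w$ is represented by the product $0$-cochain $(z_{k,ij}w_{ij})$.

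The point is that this product cochain simplifies. From $z_{k,0j}-z_{k,1j}=r\cdot f_k$ and $w_{0j}-w_{1j}=r\cdot f$ one gets
\[
z_{k,0j}w_{0j}-z_{k,1j}w_{1j}=(z_{k,0j}-z_{k,1j})\,w_{1j}+r\,z_{k,0j}\,f=r\bigl(f_k\,w_{1j}+z_{k,0j}\,f\bigr),
\]
so the ``difference $1$-cocycle'' attached to the compatible $0$-cochain $(z_{k,ij}w_{ij})$ is represented by $f_kw_{1j}+z_{k,0j}f$, and hence, by the recipe for $\delta$ recorded in Section~\ref{subsec:cech} together with Lemma~\ref{lem:H1},
\[
\tilde z_k f=\delta(z_kw)=\bigl(z_{k,0j}\,f+f_k\,w_{1j}\bigr)\bmod L\ \in\ \frac{1}{x_0x_1}R[x_0^{-1},x_1^{-1}].
\]
Passing to $L$-classes keeps exactly those monomials in which both $x_0$ and $x_1$ occur with negative exponent; the contributions of $z_{k,0j}f$ will produce the ``top'' part of the answer, those of $f_kw_{1j}$ the ``bottom'' part.

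It remains to read off the coefficient of $\frac{1}{x_0^{(a+d+1)-m}x_1^m}$, which by~\eqref{eq:isodirectsum} is $f'_m$. Substituting the expansions $z_{k,0j}=\sum_{s=0}^{d-k}g_{k+s}x_0^{-s-1}x_1^s$, $w_{1j}=-\sum_{c<m'}f_{m'}g_c\,x_0^{m'-c-a}x_1^{c-m'}$ and $f=\sum_{m'}f_{m'}x_0^{-(a+d-m')}x_1^{-m'}$, and collecting the desired monomial, forces the single index relation $m'=m+c-k$ in each product; the coefficient then works out to $\sum_{c=k}^{C}g_cf_{m+c-k}-\sum_{c=0}^{C}g_cf_{m+c-k}$ with cut-off $C=\min(d,\,a+d-1-m+k)$ when $m>k$, while for $m\le k$ only the first sum survives, with $C=d$. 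For $m\le k$ this equals $f_mg_k+\dots+f_{m+d-k}g_d$; for $m\ge k+1$ the terms with $k\le c\le C$ cancel and the expression collapses to $-f_{m-k}g_0-\dots-f_{m-1}g_{k-1}$. This already gives the ``top'' formula on $1\le m\le k$ and the ``bottom'' formula on $k+1\le m\le a+d$; on the overlap $k+1\le m\le a+k-1$ the difference of the two closed forms is $\sum_{c=0}^{d}g_cf_{m-k+c}$, the $(m-k)$-th coordinate of $A_af$, which is $0$ because $f\in\ker A_a$ — this is exactly the dichotomy in the statement, and it is vacuous when $-a=-1$ (the two index ranges then partition $\{1,\dots,a+d\}$, and $\ker A_1=R^{d}$).

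The only real work is the computation of this last paragraph: tracking the several index shifts correctly — the step $a\rightsquigarrow a+1$, the two convolutions (against $(g_k,\dots,g_d)$ coming from $z_{k,0j}$, against $(g_0,\dots,g_{k-1})$ coming from $w_{1j}$), and the truncation of each summation range at its endpoint — and checking that the truncations are compatible, so that the sum collapses to the stated closed forms. One must also fix one sign convention for the connecting map and for the isomorphism of Lemma~\ref{lem:H1} (as set up in Section~\ref{subsec:cech}), so that the signs come out as written; with the conventions of this paper they do, as one can verify on a small example such as $(d,a,k)=(2,1,1)$.
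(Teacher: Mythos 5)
Your proof is correct, but it takes a genuinely different route from the paper's. The paper never touches \v Cech cochains here: it first checks that the stated formulas define $R$-module maps into $\ker A_{a+1}$, then identifies them with multiplication by $z_k$ by bootstrapping off Proposition~\ref{xmult} and the relations \eqref{eq:coxz} --- for $k=d$ the relation $g_d-x_0z_d=0$ says $\tilde x_0\tilde z_d$ is multiplication by $g_d$, which pins down $f'_1,\dots,f'_{a+d-1}$ after truncation, and the last entry $f'_{a+d}$ is then forced by membership in $\ker A_{a+1}$; a downward induction on $k$ via $\tilde x_0\tilde z_{k}=g_{k}+\tilde x_1\tilde z_{k+1}$ handles the rest. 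You instead compute $\delta(z_k\cdot\epsilon f)$ directly from the product $0$-cochain, which is self-contained (it reuses only the explicit cochains for $z_k$ and $\epsilon f$, not Proposition~\ref{xmult}) and has the advantage of \emph{deriving} the formula rather than verifying a guessed one; the cost is the index bookkeeping in your last two paragraphs, which I checked and which does come out right, including the collapse on the overlap range to the $(m-k)$-th coordinate of $A_af$. One point worth flagging explicitly if you write this up: your closed expression $w_{1j}=-\sum_{c<m'}f_{m'}g_c\,x_0^{m'-c-a}x_1^{c-m'}$ is not literally $-u^{(1)}$ term-by-term, since for $a\geq 2$ it also picks up the monomials of $u=rf$ in which both exponents are negative; it is nonetheless a correct identity because for each such monomial the condition $c<m'$ is automatic, so its \emph{entire} coefficient --- a coordinate of $A_af$, hence zero --- is included. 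The same kernel condition is what makes the splitting $u=u^{(0)}+u^{(1)}$ well defined in the first place, so this is consistent, but as written it looks like a definition rather than a small lemma.
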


\begin{proof}
    Fix $-a \leqslant -1$. One checks that the maps as defined are indeed $R$-module homomorphisms with image in $\ker A_{a+1}$. Suppose that $(f_m)_{m=1}^{a+d-1} \in \ker A_a$ and let $\tilde z_d (f_m)_{m=1}^{a+d-1} = (f'_m)_{m=1}^{a+d}$. Then using Proposition~\ref{xmult} for the map $\tilde x_0$, we obtain
    \begin{equation*}
        \tilde{x}_0 \tilde{z}_d (f_m)_{m=1}^{a+d-1} = (f'_m)_{m=1}^{a+d-1}.
    \end{equation*}
    But the first equation in (\ref{eq:coxz}) tells us that the map $\tilde x_0 \tilde z_d$ is the same as multiplication by $g_d$, giving $f'_m$ as defined in the Proposition for the range $m = 1, \dots, a+d-1$. Since $(f'_m)_{m=1}^{a+d} \in \ker A_{a+1}$, the element $f'_{a+d}$ is uniquely determined by $f'_{a},\dots,f'_{a+d-1}$ and must also be given as in the Proposition. With the result proved for $k=d$, we proceed with a downwards induction in $l$. Suppose the formula is true for $k = d-k'$ and let $\tilde z_{d-k'-1} (f_m)_{m=1}^{a+d-1} = (f'_m)_{m=1}^{a+d}$. By the corresponding equation in \eqref{eq:coxz} we have
    \begin{equation*}
        (f'_m)_{m=1}^{a+d-1} = \tilde{x}_0 \tilde{z}_{d-k'-1} (f_m)_{m=1}^{a+d-1} = (f_m g_{d-k'-1})_{m=1}^{a+d-1} +\tilde{x}_1 \tilde{z}_{d-k'}(f_m)_{m=1}^{a+d-1}.
    \end{equation*}
    Using Proposition~\ref{xmult} for $\tilde x_1$ and the induction hypothesis we obtain the claimed formula for $f'_m$ for $m = 1,\dots,a+d-1$. The corresponding formula for $f'_{a+d}$ again follows as this is uniquely determined by $f'_{a},\dots,f'_{a+d-1}$.
\end{proof}

\subsection{The main theorem}

In this subsection, we prove Theorem~\ref{thm:mainintro}. 


\begin{theorem}\label{thm:general} Fix $n \geq 3$ and $d,e \geq 1$. Let $Z \subset X$ is non-singular, defined by a degree $(d,e)$ equation $r \in \R(X)$. Suppose that $Z$ is regular with index sequence $\{i_0,\dots,i_l\}$, where $1 \leq l \leq d$. Then there are sections $w_k \in \R(Z)_{(i_{k-1} - i_k,e)}$ for each $1 \leq k \leq l$ satisfying the equations
    \begin{equation}\label{eq:coxgen}
    x_1^{i_1 - i_0} w_1 + g_{i_0} = x_1^{i_2 - i_1} w_2 + g_{i_1} - x_0^{i_1 - i_0} w_1 = \dots = g_{i_l} - x_0^{i_l - i_{l-1}} w_l = 0.
   \end{equation}
    Furthermore, these sections, along with the sections $x_0,x_1,y_0, \dots,y_n$ restricted from $X$, generate the Cox ring $\R(Z)$. The ideal of relations among these generators is generated by the equations \eqref{eq:coxgen}. More precisely, let $U = \C[X_0, X_1, Y_0, \dots, Y_n, W_1,\dots,W_l]$ be the free $\Z^2$-graded polynomial ring with $\deg(X_i) = (1,0), \deg(Y_j) = (0,1)$ and $\deg(W_k) = (i_{k-1} - i_k,e)$ for the relevant $i,j,k$, and $J\lhd U$ the homogeneous ideal
    \begin{equation*}
       J = \langle X_1^{i_1 - i_0} W_1 + g_{i_0}(Y_j), X_1^{i_2 - i_1} W_2 + g_{i_1}(Y_j) - X_0^{i_1 - i_0} W_1,  \dots , g_{i_l}(Y_j) - X_0^{i_l - i_{l-1}} W_l\rangle. 
    \end{equation*}
    Then there is a surjective homomorphism of $\Z^2$-graded algebras
    $\psi : U \rightarrow \R(Z)$, inducing an isomorphism $\R(Z) \cong U/J$. In particular, the hypersurface~$Z$ is a Mori Dream Space.
\end{theorem}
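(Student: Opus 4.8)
The plan is to build the $l$ new generators $w_k$ out of the sections $z_1,\dots,z_d$ of Proposition~\ref{prop:coxz1}, read off the relations \eqref{eq:coxgen}, and then bootstrap the description of $\R(Z)$ already available in degrees $(a,b)$ with $a\ge -1$ to all remaining degrees, the regularity of $\{g_{i_0},\dots,g_{i_l}\}$ entering only through the modules $\ker A_a$. Concretely: since $g_i=0$ for $i\notin\{i_0,\dots,i_l\}$, the relations \eqref{eq:coxz} read $x_1 z_{m+1}=x_0 z_m$ whenever $i_{k-1}<m<i_k$, so by induction $z_m=x_0^{m-i_{k-1}-1}x_1^{i_k-m}w_k$ for a well-defined section $w_k\in\R(Z)_{(i_{k-1}-i_k,e)}$; on the level of \v Cech cochains (from the proof of Proposition~\ref{prop:coxz1}) the cochain for $z_{i_{k-1}+1}$ is divisible by $x_1^{i_k-i_{k-1}-1}$ on each chart, and one sets $w_k$ equal to the quotient, which is legitimate since $\R(Z)$ is a domain and these cochains are independent of the $\p^n$-chart. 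Feeding $z_{i_{k-1}+1}=x_1^{i_k-i_{k-1}-1}w_k$ and $z_{i_k}=x_0^{i_k-i_{k-1}-1}w_k$ into the $d+1$ relations \eqref{eq:coxz} at the indices $m=i_0,\dots,i_l$ collapses them to the $l+1$ relations \eqref{eq:coxgen}. This produces the $\Z^2$-graded homomorphism $\psi\colon U\to\R(Z)$ with $J\subseteq\ker\psi$, together with a ring map $\theta\colon T\to U$, $Z_m\mapsto X_0^{m-i_{k-1}-1}X_1^{i_k-m}W_k$, satisfying $\theta(I)\subseteq J$ and $\psi\circ\theta=\phi$.

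For surjectivity of $\psi$: because each $z_m$ is a monomial in $x_0,x_1$ times some $w_k$, Proposition~\ref{prop:coxz1}(ii) already gives that $x_i,y_j,w_k$ generate $\R(Z)_{(a,b)}$ for $a\ge-1$. For $a\le-2$ we have $\R(X)_{(a,b)}=0$, so by Proposition~\ref{prop:ker} the sequence \eqref{seq:short} gives $\R(Z)_{(a,b)}\cong N_{(a,b)}\cong\ker A_{-a}[e]$, and it suffices to hit every element of $\ker A_{-a}$. This is where regularity is used: the algebraic input of Section~4 describes $\bigoplus_a\ker A_a$ as being generated, under the multiplication operators $\tilde z_1,\dots,\tilde z_d$ of Proposition~\ref{zmult} (hence, via $z_m=x_0^{\bullet}x_1^{\bullet}w_k$, under multiplication by $w_1,\dots,w_l$), by the Koszul syzygies of $\{g_{i_0},\dots,g_{i_l}\}$ sitting in $\ker A_2$, and each such syzygy is a product of two of the $w_k$ times a monomial in $x_0,x_1$. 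Inducting on $a$, every section of $\R(Z)_{(a,b)}$ with $a\le-2$ is thereby a polynomial in $x_i,y_j,w_k$, so $\psi$ is onto.

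For injectivity, i.e.\ $\ker\psi=J$: in degrees $a\ge-1$, a rewriting argument --- peeling off one $W_k$ at a time using $X_1^{i_k-i_{k-1}-1}W_k=\theta(Z_{i_{k-1}+1})$ and $X_0^{i_k-i_{k-1}-1}W_k=\theta(Z_{i_k})$, which is always possible when the $x$-degree is at least $-1$ --- shows the induced map $\bar\theta\colon T/I\to U/J$ is surjective there; since $\bar\psi\circ\bar\theta=\bar\phi$ is an isomorphism onto $\R(Z)_{(a,b)}$ for $a\ge-1$ by Corollary~\ref{cor:partialdescription}, this forces $\ker\bar\psi=0$ in those degrees. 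For $a\le-2$ I would compare bigraded dimensions: the Section~4 analysis shows that \eqref{eq:coxgen} is a regular sequence in $U$ (equivalently $U/J$ is a complete intersection of the expected dimension $n+2$), with Hilbert series $\frac{(1-t_2^e)^{l+1}}{(1-t_1)^2(1-t_2)^{n+1}\prod_{k=1}^{l}(1-t_1^{i_{k-1}-i_k}t_2^e)}$; on the other side $\dim_\C\R(Z)_{(a,b)}=\dim_\C N_{(a,b)}=\dim_\C(\ker A_{-a})_{b-e}$ is computed by the same Section~4 description of $\ker A_a$, and the two Hilbert series agree, so the surjection $\bar\psi\colon U/J\twoheadrightarrow\R(Z)$ is an isomorphism. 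Finite generation of $\R(Z)$, together with $Z$ being smooth projective with free Picard group, gives the Mori dream space conclusion.

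The main obstacle is exactly the Section~4 input: a description of the kernels $\ker A_a$ attached to a regular sequence $\{g_{i_0},\dots,g_{i_l}\}$ --- as a module under the operators $\tilde z_k$, with its Hilbert function, and precise enough to see that \eqref{eq:coxgen} is a regular sequence in $U$. This one algebraic fact drives both the generation induction and the dimension count; everything else is bookkeeping with the cochain calculus of Section~\ref{subsec:cech} and the multiplication formulas of Propositions~\ref{xmult} and~\ref{zmult}.
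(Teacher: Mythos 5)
Your construction of the $w_k$ (dividing the cochain of $z_{i_{k-1}+1}$ by $x_1^{i_k-i_{k-1}-1}$), the derivation of the relations \eqref{eq:coxgen}, the reduction of surjectivity to the algebraic description of $\bigoplus_a\ker A_a$ from Section~4, and the $a\ge-1$ part of the kernel computation all match the paper's proof in substance. (One imprecision: the generators of $K=\bigoplus_a\ker A_a$ under the operators are the elements $\psi(W_k)\in\ker A_{i_k-i_{k-1}}$ themselves, not Koszul syzygies sitting in $\ker A_2$; this is exactly what Theorem~\ref{regularsequencekernels} supplies, so the surjectivity step is fine once phrased correctly.)

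The genuine gap is in your injectivity argument for degrees $a\le-2$. You propose to compare Hilbert series, asserting that ``the Section~4 analysis shows that \eqref{eq:coxgen} is a regular sequence in $U$'' and that the Hilbert function of $\ker A_{-a}$ is ``computed by the same Section~4 description.'' Neither assertion is available: Theorem~\ref{regularsequencekernels} only gives a surjection $\psi\colon U_{\le-1}\to K$ with $J_{\le-1}\subseteq\ker\psi$, which yields $\dim(U/J)_{(a,b)}\ge\dim\R(Z)_{(a,b)}$ --- the same inequality you already have from surjectivity of $U/J\twoheadrightarrow\R(Z)$, not the reverse. Knowing that the two Hilbert functions agree is essentially equivalent to $\ker\psi=J_{\le-1}$, which is the statement being proved; the paper explicitly remarks after Theorem~\ref{regularsequencekernels} that it \emph{omits} the purely algebraic proof of this equality for general regular sequences (proving it only in the universal case of Theorem~\ref{thm:generalgebra}). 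Likewise, the complete-intersection property of $J$ is not an input but a consequence of the dimension count. The paper closes this gap geometrically: localizing $\R=U/J$ at $X_0$ and at $X_1$ recovers localizations of $\C[X_i,Y_j]/(r)$; away from $T_1=V(X_0,X_1)$ and $T_2=V(Y_0,\dots,Y_n)$ the scheme $\Spec\R$ is a $(\C^*)^2$-bundle over $Z$, hence smooth and irreducible of dimension $n+2$; Krull's height theorem (since $J$ has $l+1$ generators) forces every component to have dimension $\ge n+2$, while $\dim(T_1\cup T_2)\le n$ uses precisely the regularity of $g_{i_0},\dots,g_{i_l}$ via \eqref{eq:Rirrelevant}; hence $\Spec\R$ is irreducible of dimension $n+2$, a complete intersection, Cohen--Macaulay, regular in codimension one, and therefore normal and integral by Serre's criterion. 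A surjection of integral domains of equal dimension ($\dim\R(Z)=n+2$ by Hu--Keel once $\psi$ is known to be surjective) is then an isomorphism. You need some argument of this kind --- or an independent computation of $\ker A_a$ for a regular sequence, e.g.\ via an Eagon--Northcott-type resolution --- to complete the injectivity step; as written, the Hilbert-series comparison is circular.
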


\begin{proof}
    The sections $w_k$ are immediately found from the defining equation of $Z$. Indeed, for each $1 \leq k \leq l$,
    \begin{equation}\label{eq:wkdef}        
w_k=  \frac{g_{i_k} x_0^{d-i_k} + \dots + g_{i_l}x_1^{d-i_k}}{x_0^{d-i_{k-1}}} = -\frac{g_{i_0} x_0^{i_{k-1}} + \dots + g_{i_{k-1}}x_1^{i_{k-1}}}{x_1^{i_{k}}} \in \R(Z)_{(i_{k-1}-i_k,e)}
    \end{equation}
    is defined globally on $Z$, and these sections satisfy the equations (\ref{eq:coxgen}) in Theorem~\ref{thm:general}.
    In terms of our identification in Proposition~\ref{prop:ker}, the section $w_k$ for $1 \leq k \leq l$ is associated to the element $(f_m)_{m=1}^{i_k - i_{k-1} + d - 1} \in \ker A_{i_k - i_{k-1}}$ with $f_m = 1$ if $m = i_k$ and all other $f_m$ equal zero. We thus have a well-defined map $\psi\colon U\to \R(Z)$ mapping $X_i, Y_j, W_k$ respectively to the sections $x_i, y_z, w_k\in\R(Z)$, respectively. 

    The sections $z_1,\dots, z_d\in \R(Z)$ from Proposition~\ref{prop:coxz1}(i) are in the image of $\psi$; namely, for each $1 \leq k \leq l$ and each $1 \leq m \leq i_k - i_{k-1}$, it is easy to check that 
    \begin{equation} \label{eq:zinxw}
    z_{i_{k-1} + m} = \psi(Z_{i_{k-1}+m}) \ \mbox{ with } \ Z_{i_{k-1}+m} =X_0^{m-1}X_1^{i_k - i_{k-1} - m} W_k.
    \end{equation}
 So $\im\psi$ contains $\R(Z)_{(-a,b)}$ for all $-a \leq -1$ by Proposition~\ref{prop:coxz1}(ii). Furthermore, the equations \eqref{eq:coxz} become exactly the equations \eqref{eq:coxgen} and any degree $(a,b)$ monomial of $U$ with $a \geq -1$ can be rewritten as a monomial in  $X_0, X_1,Y_0,\dots,Y_n,$ $Z_1,\dots,Z_d$. Thus, by Corollary~\ref{cor:partialdescription}, we only need to consider $\R(Z)$ in degrees $(-a,b)$ with $-a \leq -2$. Surjectivity of $\psi$ in degrees $(-a,b)$ with $-a \leq -1$ is given by Theorem~\ref{regularsequencekernels}. It remains to show that the induced surjective map $\R = U/J \rightarrow \R(Z)$ is in fact an isomorphism.
 
Since $\psi$ is surjective, $Z$ is a Mori Dream Space, and thus $\dim \R(Z) = \dim Z + \rho (Z) = n+2$ by \cite[Proposition 2.9]{MDS}. To conclude the proof of the theorem, it therefore suffices to show that the algebra $\R = U/J$ is integral of dimension $n+2$. 

Localising $\R$ at $X_0$, we have the equations
\[
W_l = \frac{g_{i_l}(Y_j)}{X_0^{i_l - i_{l-1}}}, \dots, W_1 = \frac{X_1^{i_2 -i_1}W_2 + g_{i_1}(Y_j)}{X_0^{i_1 - i_0}}
\]
in $\R_{X_0}$, implying the surjectivity of 
\[
\C[X_0,X_1,Y_0,\dots,Y_n]_{X_0} \rightarrow \R_{X_0}.
\]
Substituting the above equations into $X_0^{i_1 - i_0} W_1 + g_{i_0}(Y_j) = 0$ in $\R_{X_0}$ we obtain $\frac{r(X_i,Y_j)}{X_0^d} = 0$ and thus
\begin{equation}\label{eq:Rlocalised0}
    \R_{X_0} = \left(\C[X_0,X_1,Y_0,\dots,Y_n]/r(X_i,Y_j)  \right)_{X_0}.
\end{equation}
Similarly one obtains
\begin{equation}\label{eq:Rlocalised1}
    \R_{X_1} = \left(\C[X_0,X_1,Y_0,\dots,Y_n]/r(X_i,Y_j)  \right)_{X_1}.
\end{equation}
On the other hand, we see
\begin{equation}\label{eq:Rirrelevant}
    \R / \langle X_0,X_1 \rangle = \C[Y_0,\dots,Y_n,W_1,\dots,W_l]/\langle g_{i_0}(Y_j), \dots, g_{i_l}(Y_j)\rangle.
\end{equation}
Let $T_1 = \Spec (\R / \langle X_0, X_1 \rangle)$ and $T_2 = \Spec (\R / \langle Y_0, \dots,Y_n \rangle)$. Since $r(X_i,Y_j)$ is irreducible, the rings $\R_{X_0}, \R_{X_1}$ are integral domains of dimension $n+2$ by \eqref{eq:Rlocalised0}, \eqref{eq:Rlocalised1} respectively. Additionally, since $\{g_{i_0},\dots,g_{i_l}\}$ form a regular sequence, we have $\dim T_1 = n$ by \eqref{eq:Rirrelevant}. One sees that $\dim (T_2 \cap \Spec \R_{X_0}) = 2$ via
\[
T_2 \cap \Spec \R_{X_0} = \Spec( \C[X_0,X_1,Y_0,\dots,Y_n]_{X_0}/\langle r(X_i,Y_j),Y_0,\dots,Y_j\rangle ) = \Spec \C[X_0,X_1]_{X_0}.
\]
In the same way, $\dim (T_2 \cap \Spec \R_{X_1}) = 2$. Since
\[
T_2 = (T_1 \cap T_2) \cup (\Spec \R_{X_0} \cap T_2) \cup (\Spec \R_{X_1} \cap T_2),
\]
we conclude that $\dim T_2 \leq \max \{\dim T_1,2 \} = n$. We may now consider the morphism
\[
\Spec \R \backslash (T_1 \cup T_2) \rightarrow Z,
\]
which is a $(\C^*)^2$-bundle. This implies that $\Spec \R \backslash (T_1 \cup T_2)$ is smooth and irreducible of dimension $n+2$. But $J \lhd U$ is generated by $l+1$ elements, and thus each irreducible component of $\Spec \R$ has dimension at least $n+2$. 
Thus $T_1 \cup T_2$ is not an irreducible component of $\Spec \R$, as $\dim T_1 \cup T_2 \leq n$.
Hence $T_1 \cup T_2$ is in the closure of $\Spec \R \backslash (T_1 \cup T_2)$, and 
we have that $\Spec \R$ is irreducible of dimension $n+2$. 
It follows that $J$ is a complete intersection ideal and hence $\Spec \R$ is Cohen-Macaulay.
On the other hand, $\Spec \R$ is smooth in codimension one, indeed it is smooth away from $T_1 \cup T_2$.
Hence $\Spec \R$ is normal by Serre criterion.
In particular, $\Spec \R$ is integral, and we are done.
\end{proof}

\begin{proposition}\label{prop:converse}
    Fix $n \geq 3$ and $d,e \geq 1$. Let $Z \subset X$ is non-singular, defined by a degree $(d,e)$ equation $r \in \R(X)$, admitting index sequence $\{i_0,\dots,i_l\}$. Let $U = \C[X_0, X_1, Y_0, \dots, Y_n, W_1,\dots,W_l]$ be the free $\Z^2$-graded polynomial ring with $\deg(X_i) = (1,0), \deg(Y_j) = (0,1)$ and $\deg(W_k) = (i_{k-1} - i_k,e)$ for the relevant $i,j,k$, and $J\lhd U$ the homogeneous ideal
    \begin{equation*}
       J = \langle X_1^{i_1 - i_0} W_1 + g_{i_0}(Y_j), X_1^{i_2 - i_1} W_2 + g_{i_1}(Y_j) - X_0^{i_1 - i_0} W_1,  \dots , g_{i_l}(Y_j) - X_0^{i_l - i_{l-1}} W_l\rangle.
    \end{equation*}
    If there is an isomorphism $\R(Z) \cong \R$ of $\Z^2$-graded algebras, where $\R = U/J$, then the sequence $g_{i_0},\dots,g_{i_l}$ is regular in $\C[y_0,\dots,y_n]$. In other words, $Z$ is regular with index sequence $\{i_0,\dots,i_l\}$.
\end{proposition}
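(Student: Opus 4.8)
The plan is to deduce from the abstract isomorphism $\R(Z)\cong \R:=U/J$ that the ideal $I':=\langle g_{i_0},\dots,g_{i_l}\rangle$ has height $l+1$ in $R=\C[y_0,\dots,y_n]$; since $I'$ is then generated by $l+1$ homogeneous elements of the Cohen--Macaulay ring $R$ that cut out an ideal of full height, these elements automatically form a regular sequence, and, $Z$ being nonsingular, this is precisely the statement that $Z$ is regular with index sequence $\{i_0,\dots,i_l\}$ (Definition~\ref{def:Ztypes}(ii)). To get started I would record what the isomorphism tells us about $\R$. As $Z$ is a smooth ample hypersurface in $\p^1\times\p^n$ with $n+1\geq 4$, the Lefschetz theorem gives that $Z$ is irreducible with $\Pic Z\cong\Pic X\cong\Z^2$; hence $\R(Z)$ is a domain, and since $\mathrm{Cl}(Z)$ is free, $\R(Z)$ is in fact a UFD (see \cite[\S1.5]{CoxBook}). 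The isomorphism $\R(Z)\cong\R$ makes $\R(Z)$ finitely generated, so $Z$ is a Mori dream space and $\dim\R(Z)=\dim Z+\rho(Z)=n+2$ by \cite[Proposition 2.9]{MDS}. Thus $\R$ is a UFD of dimension $n+2$.

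Next I would show that $J$ is a complete intersection ideal. The polynomial ring $U$ has $n+l+3$ variables and $J$ is generated by the displayed $l+1$ elements, so $\mathrm{ht}_U J\leq l+1$, while $\mathrm{ht}_U J=\dim U-\dim\R=(n+l+3)-(n+2)=l+1$. Since these $l+1$ generators are homogeneous and cut out an ideal of height $l+1$ in the Cohen--Macaulay ring $U$, they form a regular sequence (pass to the localisation at the irrelevant maximal ideal and apply the standard Cohen--Macaulay criterion); hence $\R$ is Cohen--Macaulay of dimension $n+2$.

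The key remaining point, and the step I expect to be the main obstacle, is to show that $X_0,X_1$ form a regular sequence on $\R$. Under the isomorphism they correspond to $x_0,x_1\in\R(Z)_{(1,0)}=H^0(Z,p_1^*\Ox_{\p^1}(1))$. Since $p_1\colon Z\to\p^1$ has connected fibres we have $p_{1*}\Ox_Z=\Ox_{\p^1}$, so $\R(Z)_{(a,0)}=H^0(\p^1,\Ox_{\p^1}(a))$, which vanishes for $a<0$, equals $\C$ for $a=0$, and equals $\langle x_0,x_1\rangle$ for $a=1$. Consequently a common divisor of $x_0,x_1$ in the graded domain $\R$ is homogeneous of degree $(a,0)$ with $0\leq a\leq 1$, hence either a unit or a scalar multiple of $x_0$ or of $x_1$; and neither of $x_0,x_1$ divides the other. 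So $\gcd(x_0,x_1)=1$ in the UFD $\R$, which, as $\R$ is a Cohen--Macaulay domain, forces $X_0,X_1$ to be a regular sequence. This is the delicate point: without it one only obtains the trivial bound $\dim R/I'\geq n-l$, and the argument collapses.

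Finally, since $\R$ is Cohen--Macaulay of dimension $n+2$ and $X_0,X_1$ is a regular sequence, $\R/\langle X_0,X_1\rangle$ is Cohen--Macaulay of dimension $n$. Setting $X_0=X_1=0$ in the generators of $J$ annihilates every $X$- and $W$-term, so (cf.\ \eqref{eq:Rirrelevant})
\[
\R/\langle X_0,X_1\rangle\;\cong\;\C[Y_0,\dots,Y_n,W_1,\dots,W_l]/\langle g_{i_0},\dots,g_{i_l}\rangle\;\cong\;(R/I')[W_1,\dots,W_l],
\]
which has dimension $\dim(R/I')+l$. Comparing dimensions gives $\dim R/I'=n-l$, i.e.\ $\mathrm{ht}_R I'=(n+1)-(n-l)=l+1$, and therefore $g_{i_0},\dots,g_{i_l}$ form a regular sequence in $R$, which is what we wanted.
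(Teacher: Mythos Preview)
Your argument is correct and follows the same skeleton as the paper's proof: establish $\dim\R=n+2$ via the Mori dream space property, show that $X_0,X_1$ form a regular sequence on $\R$, compute $\R/\langle X_0,X_1\rangle\cong (R/I')[W_1,\dots,W_l]$ as in \eqref{eq:Rirrelevant}, and compare dimensions to conclude $\mathrm{ht}_R I'=l+1$.

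The one substantive difference is in how you justify that $X_0,X_1$ is a regular sequence. The paper simply observes that the graded isomorphism carries the span of $X_0,X_1$ to the span of $x_0,x_1$ and then invokes \cite[Proposition~2.7]{MDS} directly. You instead first prove that $\R$ is Cohen--Macaulay (by computing $\mathrm{ht}\,J=l+1$, so that $J$ is a complete intersection) and then argue via the UFD property of $\R(Z)$ that $\gcd(x_0,x_1)=1$, whence regularity in a Cohen--Macaulay domain. Your route is more self-contained and avoids the external citation, at the cost of being longer; the paper's is a one-line appeal to Hu--Keel.

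One small point to tighten: when you assert that a homogeneous common divisor of $x_0,x_1$ has degree $(a,0)$, your computation of $\R(Z)_{(a,0)}=H^0(\p^1,\Ox_{\p^1}(a))$ only constrains the first coordinate. You also need $\R(Z)_{(c,b)}=0$ for all $b<0$, so that both $b\geq 0$ and $-b\geq 0$ force $b=0$. This holds because the fibres of $p_1$ are degree-$e$ hypersurfaces in $\p^n$, hence positive-dimensional projective varieties on which $\Ox(b)$ has no sections for $b<0$; thus $p_{1*}\Ox_Z(0,b)=0$ and $H^0(Z,\Ox_Z(c,b))=0$. With this line added, your argument is complete.
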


\begin{proof}
    By degree considerations, the isomorphism $\R \cong \R(Z)$ sends the $\C$-span of $X_0, X_1$ to the $\C$-span of $x_0,x_1$. By \cite[Proposition 2.7]{MDS}, we conclude $X_0, X_1$ is a regular sequence in $\R$, and thus
    \[
    \dim \R / \langle X_0, X_1 \rangle = \dim \R -2 = \dim \R(Z) -2 = n,
    \]
    where again the final equality uses $Z$ being a Mori Dream Space. Using \eqref{eq:Rirrelevant} as found in the proof of the previous Theorem, we conclude that $\dim \C[Y_0,\dots,Y_n]/\langle g_{i_0}(Y_j), \dots, g_{i_l}(Y_j)\rangle = n-l$, implying that $g_{i_0},\dots,g_{i_l}$ is regular in $\C[y_0,\dots,y_n]$, as claimed.
\end{proof}

\section{Examples}

We look at some examples of our results of geometric interest. 

\begin{example} \label{ex:def1} Let $n=3$, $d=2$ and choose three general polynomials $g_0,g_1,g_2\in \C[y_0, y_1, y_2,y_3]$ of degree $e=4$, forming a regular sequence. Consider the family of varieties $q\colon {\mathcal Z} \to \A^1_t$ defined by
\[{\mathcal Z} = \left\{g_0 x_0^2 + t g_1 x_0x_1 + g_2 x_1^2 =0 \right\}\subset \p^1\times \p^3\times \A^1_t.
\]
For every $t\in\A^1$, the hypersurface fibre $Z_t\subset \p^1\times \p^3$ of the family $q\colon {\mathcal X} \to \A^1_t$ is a smooth Calabi--Yau threefold. Let $Z_t\stackrel{f_t}\longrightarrow {\bar Z}_t \stackrel{g_t}\longrightarrow \p^3$ be the Stein factorization of the second projection $p_2$. The map $g_t\colon {\bar Z}_t \to \p^3$ is a double cover in all cases, ramified over the divisor
\[ D_t = \{t^2g_1^2-4 g_0g_2=0\}\subset\p^3
\]
that is singular along the degeneracy locus 
\[Y_t=\{g_0=tg_1=g_2=0\}\subset\p^3.\]
For $t\neq 0$, $Y_t\subset\p^3$ is a set of $64$ points, and the map $f_t\colon Z_t\to {\bar Z}_t$ is a small contraction, contracting a set of $64$ rational curves to nodes. For $t=0$ on the other hand, the map $Z_0\to {\bar Z}_0$ is a divisorial contraction, contracting a divisor $E\subset Z_0$ to a genus-$33$ curve $C\subset {\bar Z}_0$ isomorphic 
to~$Y_0\subset \p^3$. As already observed by~\cite{constantinetal}, see in particular~\cite[3.3.2-3.3.3]{constantin}, for numbers of global sections, the Cox ring detects this change in birational behaviour. 
\begin{theorem}\label{thm_cy} For $t\neq 0$, the $\Z^2$-graded Cox ring $\R(Z_t)$ of the Calabi--Yau hypersurface $Z_t\subset \p^1\times \p^3$ can be presented as
\[ \R(Z_t)\cong k[x_0,x_1,y_0,y_1,y_2,y_3,z_1,z_2]/\langle x_1 z_1 + g_0, x_1 z_2 + g_1 - x_0 z_1, g_2 - x_0 z_2\rangle,\]
with variables of bidegrees $(1,0), (0,1)$ and $(-1,4)$ respectively. 
For $t=0$, we have 
\[\R(Z_0)\cong \C[x_0,x_1,y_0,y_1,y_2,y_3,w]/\langle x_1^2 w + g_0, g_2 - x_0^2 w\rangle,\]
with variables of bidegrees $(1,0), (0,1)$ and $(-2,4)$ respectively. 
In particular, every member of the family is a Mori dream space, with a complete intersection Cox ring, but the effective cone and Cox ring jump discontinuously in the family.  
\end{theorem}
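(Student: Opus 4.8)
The plan is to obtain both presentations as direct applications of Theorem~\ref{thm:general}, once for $t\neq 0$ and once for $t=0$, and then to read off the effective cone from the generator bidegrees in each case. Two of the assertions come for free: we are given that every fibre $Z_t\subset\p^1\times\p^3$ is a smooth Calabi--Yau threefold, so $\Pic Z_t\cong\Z^2$, and whenever Theorem~\ref{thm:general} applies, $Z_t$ is automatically a Mori dream space whose Cox ring is a complete intersection --- its ideal of relations has $l+1$ generators while $\dim\R(Z_t)=n+2=5$, as shown in the proof of Theorem~\ref{thm:general}. So the substance is the two presentations and the discontinuity.

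For $t\neq 0$: the section cutting out $Z_t$ is $r_t=g_0x_0^2+(tg_1)x_0x_1+g_2x_1^2$, all of whose coefficients are nonzero, and $g_0,tg_1,g_2$ is again a regular sequence in $\C[y_0,\dots,y_3]$ because $g_0,g_1,g_2$ is one and $t$ is a unit. Hence $Z_t$ is regular with the full index sequence $\{0,1,2\}$ ($l=2$), and Theorem~\ref{thm:general} furnishes two new generators of bidegree $(-1,4)$ --- identifiable with the sections $z_1,z_2$ of Proposition~\ref{prop:coxz1} --- subject to the relations $x_1z_1+g_0$, $x_1z_2+tg_1-x_0z_1$, $g_2-x_0z_2$. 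As $tg_1$ is just another general quartic, this is the asserted presentation. The algebra generators therefore lie in bidegrees $(1,0),(0,1),(-1,4)$, so the effective cone is $\mathrm{Cone}((1,0),(-1,4))\subset\Pic(Z_t)_{\mathbb{R}}\cong\mathbb{R}^2$, the same for every $t\neq 0$.

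For $t=0$: now $r_0=g_0x_0^2+g_2x_1^2$, so the nonzero coefficients occur at indices $\{0,2\}$; thus $Z_0$ admits the index sequence $\{i_0,i_1\}=\{0,2\}$ of length $l=1$, and since two general quartics in four variables form a regular sequence, $Z_0$ is regular with this index sequence. Theorem~\ref{thm:general} then yields a single new generator $w$ of bidegree $(i_0-i_1,e)=(-2,4)$ with relations $x_1^2w+g_0$ and $g_2-x_0^2w$ --- the claimed presentation; via \eqref{eq:zinxw} one checks $z_1=x_1w$, $z_2=x_0w$, so $z_1,z_2$ are not needed as generators. The generators now lie in bidegrees $(1,0),(0,1),(-2,4)$, so the effective cone is $\mathrm{Cone}((1,0),(-2,4))=\mathrm{Cone}((1,0),(-1,2))$.

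Finally, the jump: since $-y/2\le -y/4$ for $y\ge 0$, we have $\mathrm{Cone}((1,0),(-1,4))\subsetneq\mathrm{Cone}((1,0),(-1,2))$ (for instance $(-1,3)$ lies in the second cone only). So the effective cone --- an invariant of the $\Z^2$-graded algebra $\R(Z_t)$ --- is constant, equal to $\mathrm{Cone}((1,0),(-1,4))$, on the dense open $\A^1\setminus\{0\}$, and strictly enlarges at the closed point $t=0$; this is the discontinuous jump, and geometrically the enlargement reflects the appearance of the divisor $E$ contracted by $Z_0\to\bar Z_0$. Since $\R(Z_0)$ and $\R(Z_t)$ ($t\neq0$) have different effective cones and no element of $\mathrm{GL}_2(\Z)$ identifies these cones (checking the primitive ray generators, either matching forces a non-integral matrix), the two algebras are not isomorphic as $\Z^2$-graded algebras; concretely $\R(Z_t)_{(-2,4)}=0$ while $\dim_{\C}\R(Z_0)_{(-2,4)}=1$. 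I expect no genuinely hard step --- the proof is Theorem~\ref{thm:general} plus elementary cone arithmetic --- and the only points requiring a little care are the regular-sequence hypothesis in the degenerate pattern at $t=0$, the harmless unit $t$ at $t\neq 0$, and giving ``jumps discontinuously'' a precise meaning via the effective cone as an honest invariant.
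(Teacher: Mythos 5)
Your proposal is correct and follows essentially the same route as the paper: the paper's proof is exactly the observation that $Z_t$ for $t\neq 0$ is regular with full index sequence $\{0,1,2\}$ while $Z_0$ is regular with index sequence $\{0,2\}$, after which both presentations follow from Theorem~\ref{thm:general}. Your additional verification of the regular-sequence hypotheses, the identification $z_1=x_1w$, $z_2=x_0w$ via \eqref{eq:zinxw}, and the explicit effective-cone computation making the ``discontinuous jump'' precise (e.g.\ $\R(Z_t)_{(-2,4)}=0$ versus $\dim_\C\R(Z_0)_{(-2,4)}=1$) are all sound elaborations of details the paper leaves implicit.
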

\begin{proof} By the assumptions, for $t\neq 0$, the hypesurface $Z_t$ is regular with full index sequence $\{0,1,2\}$. On the other hand, the hypersurface $Z_0$ is regular, admitting index sequence $\{0,2\}$. Hence the statements follow from Theorem~\ref{thm:mainintro}. 
\end{proof}
Antonio Laface has informed us that the Cox rings in these examples can also be computed using the method of~\cite{lafaceetal}. 
\end{example}

We introduce one further, singular, member of this deformation family of varieties with interesting behaviour.

\begin{example} \label{ex:det} Choose general linear, respectively cubic polynomials $a_0,a_1,a_2\in R_1$ and 
$b_0,b_1,b_2\in R_3$. Consider the determinantal hypersurface
\[
Z = \left\{\begin{vmatrix}a_0 & a_1& a_2 \\ b_0 & b_1& b_2 \\ x_0^2 & x_0x_1 & x_1^2\end{vmatrix}=0\right\}
=\left\{g_0x_0^2  +g_1x_0x_1 +g_2 x_1^2 = 0\right\} \subset\p^1\times \p^3,\]
with $g_0=a_1b_2-a_2b_1, g_1=a_2b_0-a_0b_2, g_2=a_0b_1-a_1b_0$. 
Then the degeneracy locus is the curve
\[
Y = \left\{{\mathrm{rk}}\begin{pmatrix}a_0 & a_1& a_2 \\ b_0 & b_1& b_2 \end{pmatrix}\leq 1\right\}
 \subset \p^3,\]
a smooth space curve of genus $21$ and degree $13$. Its ideal $I_Y\lhd R$ has a resolution~\eqref{eq:sequence_for_T} in Hilbert--Burch form
 \begin{equation}0 \longrightarrow R^{2}\stackrel{B}\longrightarrow R^{3}\stackrel{A}\longrightarrow R \longrightarrow R/I_Y\longrightarrow 0.\end{equation}
Here, $B=\begin{pmatrix}a_0 & a_1& a_2 \\ b_0 & b_1& b_2 \end{pmatrix}^t$,  
and $A=\bigwedge^2 B = (g_0, g_1, g_2)^t$. As a determinantal hypersurface, $Z$ is singular along the locus given by the $2\times 2$ minors of its defining matrix, which gives the locus 
\[
\mathop\mathrm{Sing} Z=\left\{g_0=g_1=g_2=a_1^2-a_0a_2=x_0a_1-x_1a_0=x_0a_2-x_1a_1=0\right\}\subset \p^1\times \p^3,
\]
a set of~$26$ isolated ordinary double points all lying on the ruled surface $E\subset Z$. Blowing up this ruled surface, a Weil divisor through each of the ODP's, gives a small resolution $\widetilde Z\rightarrow Z$, a smooth Calabi--Yau model. 

Since $Z\subset X=\p^1\times\p^3$ has isolated nodal singularities, the restriction map $\pic(X)\to\pic(Z)$ is still an isomorphism. 
However, $Z$ is not $\Q$-Cartier, so the ring $\R(Z)$ as defined above only contains sections of Cartier divisors.

By Proposition~\ref{prop:ker}(ii), the columns of $B$ give us elements $f_1\in N_{(-2,5)}$ and $f_2\in N_{(-2,7)}$ that together generate the $R$-module $\bigoplus_{b \in \Z} N_{(-2,b)}$. Let $u=\epsilon f_1 \in \R(Z)_{(-2,5)}$ and $w =\epsilon f_2\in \R(Z)_{(-2,7)}$ be the lifts of these module generators to elements of the Cox ring as in Proposition~\ref{prop:epsilon}. By Proposition~\ref{xmult}, these satisfy equations
\begin{align*}
    x_0 u = a_1 z_1 + a_2 z_2, \quad x_1 u = a_2 z_1 + a_3 z_2, \\
    x_0 w = b_1 z_1 + b_2 z_2, \quad x_1 w = b_2 z_1 + b_3 z_2.
\end{align*}
Furthermore, by comparing $\delta(u), \delta(w),\delta(z_1^2),\delta(z_1 z_2),\delta(z_2^2)$ we have
\begin{equation*}
    z_1^2 + b_2 u - a_2 w = z_1 z_2 - b_1 u + a_1 w = z_2^2 + b_0 u - a_0 w = 0.
\end{equation*}
Consider the free $\Z^2$-graded $k$-algebra \[
    T = \C[X_0,X_1,Y_0, \dots,Y_n,Z_1,Z_2,U,W]
    \]
    with generators in degrees $(1,0), (0,1), (-1,4), (-2,5)$ and $(-2,7)$ respectively. Let $I \lhd T$ be the ideal 
    \begin{gather*}
       I= \langle X_1 Z_1 + g_0(Y_i), \quad X_1 Z_2 + g_1(Y_i) - X_0 Z_1, \quad g_2(Y_i) - X_0 Z_2,\\
        X_0 U - a_1(Y_i) Z_1 - a_2(Y_i) Z_2, \quad X_1 U - a_2(Y_i) Z_1 - a_3(Y_i) Z_2, \\
    X_0 W - b_1(Y_i) Z_1 + b_2(Y_i) Z_2, \quad X_1 W - b_2(Y_i) Z_1 - b_3(Y_i) Z_2,\\
    Z_1^2 + b_2(Y_i) U - a_2(Y_i) W, \quad Z_1 Z_2 - b_1(Y_i) U + a_1(Y_i) W, \quad Z_2^2 + b_0(Y_i) U - a_0(Y_i) W\rangle.
    \end{gather*}
    Then our observations so far prove the existence of an algebra homomorphism 
    \[
    \phi: T/I \rightarrow \R(Z),
    \]
 to the (Cartier) Cox ring  $\R(Z)$ of $Z$ that induces isomorphisms $(T/I)_{(a,b)} \cong \R(Z)_{(a,b)}$ whenever $a \geq -2$.

We sketch an argument provided to us by Antonio Laface that proves that for a general determinantal hypersurface~$Z$, the map $\phi$ 
is an isomorphism. By~\cite[Thm.1]{lafaceetal}, the Cox ring $\R(Z)$ is the intersection of certain localizations of quotients
of~$\R(\p^1\times\p^3)$. This intersection can be computed using the ideas of~\cite[Cor.2.4]{lafaceetal}, which gives the result that~$\phi$ is surjective under certain dimension and saturation conditions. The latter can be checked for general $a_i,b_j$ using computer algebra.
\end{example} 

\begin{example}\label{ex:linear}
    Consider the hypersurface 
    \[Z=\{
    y_0 x_0^4 + (-\lambda_0 y_0 -\lambda_1 y_1 -\lambda_2 y_2 -\lambda_3 y_3)x_0^3 x_1 + y_1 x_0^2 x_1^2 + y_2 x_0 x_1^3 + y_3 x_1^4 = 0\}\subset \p^1 \times \p^3
    \]
    of bidegree $(d,e) = (4,1)$ with non-zero scalars $\lambda_j\in\C$, noting that a general degree $(4,1)$ hypersurface in $\p^1 \times \p^3$ has this form after a coordinate transformation.
    As the five linear forms are linearly dependent, by Theorem~\ref{thm:mainintro}(ii), the Cox ring is not of the form~\eqref{presentationR}, contrary to the statement \cite[Thm.1.1(iv)]{ottem}. We investigate the Cox ring of this particular family in this example. 


The kernel of $A_2$ as defined in Proposition~\ref{prop:ker} contains $(\lambda_0,1,\lambda_1,\lambda_2,\lambda_3)$, defining a section $w \in \R(Z)_{(-2,1)}$,
which satisfies the equations
\begin{equation}\label{eq:linear}
       x_0 w = \lambda_0 z_1 + z_2 +\lambda_1 z_3 +\lambda_2 z_4, \quad x_1 w = z_1 + \lambda_1 z_2+ \lambda_2 z_3 +\lambda_3 z_4  .
\end{equation}
    These equations
    imply that for general choices of $\lambda_j$, 
    the subalgebra $\R'$ of $\R(Z)$ generated by $x_0, x_1,w,z_3,z_4$ contains $z_1, z_2$.
Note also that the relations
\[
x_1 z_1 + y_0=0, \quad x_1 z_3 + y_1 - x_0 z_2=0, \quad x_1 z_4 + y_2 - x_0 z_3=0,\quad y_3 - x_0 z_4=0
\]
hold in $\R(Z)$.  
Hence the subalgebra $\R'$ contains $y_0, y_1,y_2,y_3$ as well.
In fact, we can show that $\R(Z)=\R'$, which is a polynomial ring as follows.

\begin{proposition}\label{prop:linear}
         For general constants $\lambda_j\in\C$, the $\Z^2$-graded Cox ring $\R(Z)$ of the hypersurface $Z \subset \p^1 \times \p^3$ is isomorphic to the polynomial ring $\C[x_0,x_1,w,z_3,z_4]$.    
\end{proposition}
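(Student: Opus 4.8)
The plan is to show that $\R(Z)=\R'$ and that $\R'$ is freely generated by $x_0,x_1,w,z_3,z_4$. For the first part: the discussion preceding the proposition already records that, for general $\lambda_j$, the subalgebra $\R'$ contains $z_1,z_2$ (solve the linear system~\eqref{eq:linear}, which is possible since $\lambda_0\lambda_1-1\neq 0$) and hence $y_0,\dots,y_3$. So $\R'$ contains $x_0,x_1,y_0,\dots,y_3,z_1,\dots,z_4,w$, and by Proposition~\ref{prop:coxz1}(ii) it contains $\R(Z)_{(a,b)}$ for every $a\geq -1$; the remaining difficulty is the degrees $a\leq -2$.

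I would resolve this using the geometry of the first projection. Writing $r=\sum_{j=0}^{3}L_j(x_0,x_1)\,y_j$ with $L_j\in\C[x_0,x_1]_d$ the coefficient of $y_j$, note that the degeneracy locus $Y=\{g_0=\dots=g_4=0\}\subset\p^3$ is empty (as $\{g_0,g_2,g_3,g_4\}=\{y_0,\dots,y_3\}$), and a direct computation shows that the $L_j$ have no common zero on $\p^1$ as soon as $\lambda_0\lambda_1\neq 1$, hence for general $\lambda_j$. Therefore $\Ox_{\p^1}^{\oplus 4}\xrightarrow{(L_0,\dots,L_3)}\Ox_{\p^1}(d)$ is surjective, its kernel $\mathcal F$ is a rank-$3$ vector bundle, $Z=\p_{\p^1}(\mathcal F)$ is the associated $\p^2$-bundle (with $p_1$ the bundle map and $\Ox_Z(0,1)$ the restriction of $\Ox_{\p^3}(1)$), and in particular $Z$ is smooth with $\Pic Z\cong\Z^2$. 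The bundle $\mathcal F$ has degree $-d=-4$, and since the four forms $L_0,\dots,L_3$ are linearly independent in $\C[x_0,x_1]_d$ (immediate from their explicit shape) we have $H^0(\p^1,\mathcal F)=0$, so every summand of the splitting of $\mathcal F$ has negative degree and the splitting type must be $\mathcal F\cong\Ox_{\p^1}(-2)\oplus\Ox_{\p^1}(-1)^{\oplus 2}$. Consequently $\R(Z)$ is the Cox ring of $\p_{\p^1}(\Ox(-2)\oplus\Ox(-1)^{\oplus 2})$, namely the polynomial ring $\C[x_0,x_1,\eta_1,\eta_2,\eta_3]$ with $x_0,x_1$ pulled back from $\p^1$ and $\deg\eta_1=(-2,1)$, $\deg\eta_2=\deg\eta_3=(-1,1)$. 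In particular $\R(Z)$ is finitely generated, so $Z$ is a Mori dream space, and the degrees $a\leq -2$ are now understood.

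It remains to recognise $\{x_0,x_1,w,z_3,z_4\}$ as a generating set of this polynomial ring. The graded piece $\R(Z)_{(-2,1)}$ is one-dimensional, spanned by $\eta_1$, and $w\in\R(Z)_{(-2,1)}$ is nonzero (its image $\delta(w)$ is the nonzero syzygy $(\lambda_0,1,\lambda_1,\lambda_2,\lambda_3)$), so $w$ is a nonzero scalar multiple of $\eta_1$. The piece $\R(Z)_{(-1,1)}$ is four-dimensional, with basis $\eta_2,\eta_3,x_0w,x_1w$; by Proposition~\ref{prop:coxz1}(i) it also has basis $z_1,z_2,z_3,z_4$; and since $z_1,z_2$ lie in the $\C$-span of $x_0w,x_1w,z_3,z_4$ (established above), the four elements $x_0w,x_1w,z_3,z_4$ span $\R(Z)_{(-1,1)}$, so $\eta_2,\eta_3$ lie in the subalgebra generated by $x_0,x_1,w,z_3,z_4$. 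Hence that subalgebra contains $x_0,x_1,\eta_1,\eta_2,\eta_3$ and equals $\R(Z)$; that is, $\R(Z)=\R'$. Finally, $\R(Z)$ is an integral domain of Krull dimension $5$ generated by the five elements $x_0,x_1,w,z_3,z_4$, so these are algebraically independent and the natural map $\C[X_0,X_1,W,Z_3,Z_4]\to\R(Z)$ is an isomorphism, proving $\R(Z)\cong\C[x_0,x_1,w,z_3,z_4]$ with the stated bidegrees.

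The main obstacle is the middle step: verifying that $Z$ is genuinely a $\p^2$-bundle over $\p^1$ for general $\lambda_j$, and pinning down the splitting type $\Ox(-2)\oplus\Ox(-1)^{\oplus 2}$ of $\mathcal F$. If one prefers to remain within the cohomological framework of Section~\ref{sec:lifting}, this obstacle reappears as the claim that each syzygy module $\ker A_a$ (for $a\geq 2$) is generated over $R$ by elements whose $\epsilon$-lifts lie in $\R'$: for $a=2$ one uses the splitting $\ker A_2\cong R\cdot(\lambda_0,1,\lambda_1,\lambda_2,\lambda_3)\oplus\ker\!\bigl(R^{\oplus(n+1)}\xrightarrow{(y_0,\dots,y_n)}R\bigr)$ (valid because $g_1\in(y_0,\dots,y_n)$), the first summand lifting to $Rw$ and the second — the first syzygy module of the regular sequence $y_0,\dots,y_n$ — lifting to $R$-combinations of the products $z_kz_m$; for $a\geq 3$ one descends to smaller $a$ by multiplication with the $z_k$, using Proposition~\ref{zmult}.
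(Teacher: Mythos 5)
Your argument is correct and follows essentially the same route as the paper: both identify $Z$ as the $\p^2$-bundle of a split rank-$3$ bundle over $\p^1$ with splitting type forced by a cohomology vanishing (you work with the dual kernel bundle $\mathcal{F}\cong\Ox_{\p^1}(-1)^{\oplus 2}\oplus\Ox_{\p^1}(-2)$, the paper with the quotient bundle $\mathcal{E}\cong\Ox_{\p^1}(1)^{\oplus 2}\oplus\Ox_{\p^1}(2)$), deduce that $Z$ is a smooth projective toric variety whose Cox ring is the polynomial ring on its five torus-invariant divisors, and then match $w,z_3,z_4$ with the fibre coordinates via the one-dimensionality of $\R(Z)_{(-2,1)}$ and the fact that $x_0w,x_1w,z_3,z_4$ span $\R(Z)_{(-1,1)}$. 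The only cosmetic difference is that the paper first exhibits $Z$ as the blow-up of $\p H^0(\Ox_Z(-1,1))\cong\p^3$ along a torus-invariant line before invoking the toric Cox ring, whereas you invoke it directly for the projectivized split bundle.
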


\begin{proof}
Let $r \in H^0(\Ox_{\p^1 \times \p^3}(4,1)) =H^0(\Ox_{\p^1}(4)) \otimes H^0( \Ox_{\p^3}(1)) $ be the section which defines $Z$.
Then $r$ induces an exact sequence
\begin{align*}
0 \to \Ox_{\p^1}(-4) \xrightarrow{r} H^0( \Ox_{\p^3}(1)) \otimes \Ox_{\p^1} \to \mathcal{E} \to 0
\end{align*}
on $\p^1$
and we have $Z =\p_{\p^1}(\mathcal{E})\subset \p^1 \times \p^3$.
We note that $\mathcal{E}$ is locally free of rank $3$
since $r$ is general.
Let $\mathcal{E} \simeq \Ox_{\p^1} (a) \oplus \Ox_{\p^1} (b) \oplus \Ox_{\p^1} (c)$ with $a+b+c=4$.
Since $\mathcal{E}$ is a quotient of a trivial bundle, we have $a,b,c \geq 0$.
If $a =0$, then $r$ must be contained in $H^0(\Ox_{\p^1}(4)) \otimes V'$ for
$V' =\ker H^0(\Ox_{\p^3}(1)) \to H^0(\Ox_{\p^1}(a))=\C$,
which contradicts the generality of $r$.
Hence $a,b,c \geq 1$ and $\mathcal{E} \simeq  \Ox_{\p^1} (1) \oplus \Ox_{\p^1} (1) \oplus \Ox_{\p^1} (2)$.

Since the embedding $Z =\p_{\p^1}(\mathcal{E})\subset \p^1 \times \p^3$ over $\p^1$ is induced by 
$H^0( \Ox_{\p^3}(1)) \otimes \Ox_{\p^1} \twoheadrightarrow \mathcal{E} $,
the tautological line bundle $\Ox_{\mathcal{E}}(1)$ on $ \p_{\p^1}(\mathcal{E})$ is identified with $\Ox_Z(0,1)$,
where $\Ox_Z(a,b) \coloneqq \Ox_{\p^1\times \p^3}(a,b)|_Z $.
Since $\mathcal{E}(-1) \simeq 
\Ox_{\p^1} \oplus \Ox_{\p^1} \oplus \Ox_{\p^1} (1)$, 
the morphism 
\[\mu : Z=\p_{\p^1}(\mathcal{E}(-1)) \to P = \p H^0(\Ox_Z(-1,1)) \cong\p^3\]
induced by $|\Ox_Z(-1,1)|$ is the blow-up along a line $l$.
We note that $P$ is different from the second factor of $\p^1 \times \p^3$.

Let 
\begin{align*}
D_i =(x_i=0), \quad D'_k=(z_k =0), \quad E=(w=0) 
\end{align*}
be divisors on $Z$.
Since $w$ is a non-zero section of $H^0( \Ox_Z(-2,1)) = H^0(\mathcal{E}(-2)) \simeq \C$,
$E$ is the exceptional divisor of the blow-up $\mu$.

Since $z_1,z_2,z_3,z_4$ are linearly independent over $R$ by Proposition \ref{prop:z_k},
so are over $\C$.
Hence $z_1,z_2,z_3,z_4$ form a basis of $ H^0(\Ox_Z(-1,1))  =H^0(\mathcal{E}(-1)) \simeq \C^4$.
By (\ref{eq:linear}), $x_0w,x_1w,z_3,z_4$ also form a basis of $ H^0(\Ox_Z(-1,1)) =H^0(P, \Ox(1))$. We regard $P=\p^3$ as a toric variety by this basis.
Let $H_i=(x_iw=0) \subset P$ be the torus invariant planes for $i=0,1$.
Since $\mu^*H_i=D_i+E$, the line $l =\mu(E)$ is $H_0 \cap H_1$, which is torus invariant.
Hence $\mu$ is a toric morphism and $Z$ is a toric variety as well.
Then torus invariant prime divisors of $Z$ are $D_0,D_1,E, D'_3,D'_4$.
Since the Cox ring of a smooth projective toric variety is a polynomial ring whose variables correspond to torus invariants prime divisors,
we conclude that $\R(Z) \simeq \C[x_0,x_1,w,z_3,z_4]$.
\end{proof}

    
\end{example}

As a last example, we discuss from our point of view a phenomenon in the birational geometry of the hypersurface $Z$ that formed a key part of the argument of~\cite{ottem}.

\begin{example}\label{ex:regcasemodel}
    Let $n \geq 3$ and $d,e \geq 1$. Choose general polynomials $g_0,\dots,g_d \in \C[y_0,\dots,y_n]$ of degree $e$, forming a regular sequence. Then
    \[
    \R(Z) \cong \C[x_0,x_1,y_0,\dots,y_n,z_1,\dots,z_d]/\langle x_1 z_1 + g_0, x_1 z_2 + g_1 - x_0 z_1,\ldots, g_d - x_0 z_d\rangle.
    \]
    Let $S_{Z^+}$ be the subalgebra of $\R(Z)$ generated by $y_0,\dots,y_n,z_1,\dots,z_d$. By Proposition~\ref{prop:generalWideal}, proved in the final section, we have an isomorphism
    \[
    S_{Z^+} \cong \C[y_0,\dots,y_n,z_1\dots,z_d]/J',
    \]
    where $J'$ is the ideal generated by the equations
    \begin{equation} \label{eq:secondmodel}
     g_{k} (z_{k'+1} z_{k''} - z_{k'} z_{k''+1}) - g_{k'} (z_{k+1} z_{k''} - z_k z_{k''+1}) + g_{k''} (z_{k+1} z_{k'} - z_k z_{k'+1}),
    \end{equation}
    for $0 \leq k < k' < k'' \leq d$. The ring $S_{Z^+}$, defined by the equations \eqref{eq:secondmodel}, is the homogeneous coordinate ring of a birational model $Z^+ \subset \p^d_{z_k} \times \p^n_{y_j}$ of $Z$; compare~\cite[Section 2]{ottem}.
\end{example}

\section{A problem in algebra}


Fix $d \geq 0$ and fix an admissible index sequence $\{i_0,\ldots, i_l\}$ for some $0 \leq l \leq d$; note that $d = 0$ is equivalent to $l = 0$ by our conventions. Let $S = \C[Y_{i_0}, \dots, Y_{i_l}]$ be the free commutative $\C$-algebra on $l+1$ generators. For each $a \geqslant 1$, define a map of free $S$-modules \[A_a : S^{a+d-1} \rightarrow S^{a-1}\] by the $(a-1) \times (a+d-1)$ matrix with $(s,t)$ entry equal to $Y_{i_k}$ if $t-s = i_k$ and $0$ otherwise. Denote $K_a=\ker A_a$ and let $K = \bigoplus_{a\geq 1} K_a$, which we regard as a graded $S$-module. Note that $A_1=0$ and so $K_1\cong S^d$, with the convention that $S^0 = 0$.

\begin{example}
    If $d = 5, l = 2$ and the degree sequence is $\{0,3,5\}$, then $A_3$ is given by
\begin{equation*}
    \begin{pmatrix}
   Y_{0} & 0 & 0 & Y_{3} & 0 & Y_{5} & 0 \\
   0 & Y_{0} & 0 & 0 & Y_{3} & 0 & Y_{5}
    \end{pmatrix}.
    \end{equation*}
\end{example}

We will mostly employ formal algebraic ideas in this section, but sometimes the use of the following auxiliary hypersurface will be helpful. Consider the non-singular hypersurface
    \[
    Z' = \{Y_{i_0} x_0^{d} + Y_{i_1} x_0^{d-i_1} x_1^{i_1} + \dots + Y_{i_l} x_1^d\} \subset \p_{x_i}^1 \times \p_{Y_{i_k}}^l.
    \]
We can repeat the arguments of Section 2 for this hypersurface. Its Cox ring contains sections $w_1,\dots,w_l\in\R(Z')$ as given in~\eqref{eq:wkdef}, and as in equations~\eqref{eq:zinxw}, contains the sections $z_1,\dots,z_d\in\R(Z')$ given in Proposition~\ref{prop:coxz1}. We also have an identification 
\begin{equation}\label{eq:KandZ'} K\cong \bigoplus_{-a \leq -1} \bigoplus_{b\in\Z} \R(Z')_{(-a,b)}\end{equation}
as graded $S$-modules via Proposition~\ref{prop:ker}.

\begin{definition}
    Fix $a \geq 1$.
\begin{itemize}
    \item[(i)] Define the operators $\tilde x_0, \tilde x_1\colon K_{a+1} \rightarrow K_a$ respectively as truncation by one entry on the right, respectively on the left.
    \item[(ii)] For each $1 \leq k \leq d$ we define $\tilde z_k: K_a \rightarrow K_{a+1}$ as follows. Suppose that $(f_m)_{m=1}^{a+d-1} \in \ker A_a$. Then $ \tilde z_k (f_m)_{m=1}^{a+d-1}$ is given by $(f'_m)_{m=1}^{a+d}$, where
    \begin{align*}
        f'_m =& f_m Y_k + \dots + f_{m+d-l} Y_d,& \quad m = 1,\dots,a+k-1, \\
        f'_m =& -f_{m-k} Y_0 - \dots - f_{m-1} Y_{k-1},& \quad m = k+1,\dots, a+d,
    \end{align*}
    where $Y$ variables of undefined index are set to be zero.
    \item[(iii)] For each $1 \leq k \leq l$, we define $\tilde w_k : K_{a} \rightarrow K_{a+i_k - i_{k-1}}$ by stipulating that
    \[
    \tilde x_0^{m-1} \tilde x_1^{i_k - i_{k-1}-m} \tilde w_k = \tilde z_{i_{k-1}+m}
    \]
    for each $1 \leq m \le i_k - i_{k-1}$.
\end{itemize}
    \label{operators1}
\end{definition}

\begin{proposition} \label{prop:opwelldefined}
The operators $\tilde x_i, \tilde z_k, \tilde w_k\colon K\to K$ in Definition~\ref{operators1} are well defined. 
\end{proposition}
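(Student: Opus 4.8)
The statement asserts that the three families of operators $\tilde x_i$, $\tilde z_k$, $\tilde w_k$ of Definition~\ref{operators1} are well defined as maps between the modules $K_a$, i.e.\ that they actually land in the appropriate kernels and that the defining conditions for $\tilde w_k$ are consistent. My plan is to exploit the identification \eqref{eq:KandZ'} of $K$ with the low-degree part of the Cox ring $\R(Z')$ of the auxiliary hypersurface, so that the three families of operators become nothing but multiplication by $x_0$, $x_1$, $z_k$, $w_k$ inside the ring $\R(Z')$, hence automatically well defined. The only thing to check is that the explicit formulas written down in Definition~\ref{operators1} agree with these multiplication maps; but this is exactly the content of Propositions~\ref{xmult}, \ref{zmult} and the equations \eqref{eq:zinxw} applied to $Z'$ in place of $Z$. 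So the proof is essentially a bookkeeping translation.

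Concretely, I would proceed as follows. First, for $\tilde x_0, \tilde x_1$: under \eqref{eq:KandZ'}, $K_{a+1}\cong\bigoplus_b \R(Z')_{(-a-1,b)}$ and $K_a\cong\bigoplus_b\R(Z')_{(-a,b)}$, and Proposition~\ref{xmult} (valid here since $Z'$ is nonsingular, being a smooth hypersurface of the form \eqref{eq:generalZform}) says that the composite $\delta\circ(\cdot x_i)\circ\epsilon\colon \ker A_{a+1}\to\ker A_a$ is precisely truncation on the right (for $x_0$) or on the left (for $x_1$). Since multiplication $\cdot x_i\colon \R(Z')_{(-a-1,b)}\to\R(Z')_{(-a,b)}$ is a genuine map of vector spaces, and $\delta,\epsilon$ are mutually inverse when $-a\le -1$ (short exact sequence \eqref{seq:short} with $\iota^*\R(X')$ contributing nothing in these degrees, as in the proof of Proposition~\ref{prop:z_k}), the truncation operators are well defined and land in $K_a$. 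Next, for $\tilde z_k$: identical argument using Proposition~\ref{zmult}, which tells us that multiplication by $z_k$ corresponds to precisely the formula for $(f'_m)$ in Definition~\ref{operators1}(ii); in particular Proposition~\ref{zmult} already checks that the output lies in $\ker A_{a+1}$ and that, when $-a<-1$, the two displayed formulas for the overlapping range of indices agree, which is what "well defined" means here.

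Finally, for $\tilde w_k$: the content to verify is (a) that there is a well-defined element $w_k\in\R(Z')_{(i_{k-1}-i_k,e)}$ and (b) that multiplication by it satisfies the stipulated relations $\tilde x_0^{m-1}\tilde x_1^{i_k-i_{k-1}-m}\tilde w_k=\tilde z_{i_{k-1}+m}$ for all $1\le m\le i_k-i_{k-1}$, consistently. For (a), $w_k$ is given by the explicit global expression \eqref{eq:wkdef} on $Z'$ (here $Z'$ is automatically regular with the full index sequence $\{0,i_1,\dots,i_l\}$ since its coefficients are the independent variables $Y_{i_k}$, which form a regular sequence), so $w_k$ exists. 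For (b), the identity \eqref{eq:zinxw}, established in the proof of Theorem~\ref{thm:general} and valid verbatim for $Z'$, says exactly $z_{i_{k-1}+m}=x_0^{m-1}x_1^{i_k-i_{k-1}-m}w_k$ in $\R(Z')$; translating through \eqref{eq:KandZ'} and using that $\tilde x_0,\tilde x_1,\tilde z_j$ are the multiplication operators just identified, this gives the stipulation in Definition~\ref{operators1}(iii), and consistency across different $m$ is automatic because all the relations hold in the single ring $\R(Z')$. Hence $\tilde w_k$ is the well-defined operator "multiplication by $w_k$".

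\textbf{Main obstacle.} There is no deep obstacle: the work is entirely in matching the Section~2 formulas to the auxiliary hypersurface $Z'$ and confirming that all hypotheses of Propositions~\ref{xmult} and \ref{zmult} (nonsingularity of $Z'$, the degree ranges $-a\le -2$ resp.\ $-a\le -1$) are met in the range relevant to $K=\bigoplus_{a\ge 1}K_a$. The one point needing a word of care is the edge case $a=1$ (where $K_1\cong S^d$ and $A_1=0$): one should check that the formulas for $\tilde z_k$ and the truncations $\tilde x_i$ still make sense and land correctly there, which they do since these are precisely the degree-$(-1,e)$ and $(0,b)$ parts of $\R(Z')$ handled by Proposition~\ref{prop:z_k}(i) and Corollary~\ref{cor:partialdescription}. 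Everything else is routine.
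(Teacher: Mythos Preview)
Your proposal is correct and matches the paper's own argument. The paper gives two proofs: a terse direct check (the overlap of the two formulas for $f'_m$ holds because $(f_m)\in\ker A_a$, and likewise for $\tilde w_k$), and then the exact auxiliary-hypersurface argument you outline, identifying $\tilde x_i,\tilde z_k,\tilde w_k$ with multiplication by $x_i,z_k,w_k$ in $\R(Z')$ via~\eqref{eq:KandZ'} and Propositions~\ref{xmult}--\ref{zmult}. Your write-up is in fact more careful than the paper's on the edge cases and on why $\delta,\epsilon$ are inverse in the relevant degrees.
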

\begin{proof} For $\tilde x_i$, the statement is clear. For $z_k$, when $a = 1$, then each $f'_m$ is specified by one formula. If $a > 1$, then for $m = k+1, \dots, a+k-1$, the two given definitions of $f'_m$ agree, since $(f_m)_{m=1}^{a+d-1} \in \ker A_a$. Similarly, it is easy to see directly that the operators $\tilde w_k$ are well defined. 

An alternative argument employs the auxiliary hypersurface $Z'$ defined above. 
The operators in Definition~\ref{operators1} correspond to multiplication by the corresponding sections $x_i, z_k, w_k\in \R(Z')$ by Propositions~\ref{xmult}--\ref{zmult} under the identification~\eqref{eq:KandZ'}, and are thus well defined.  
\end{proof}

Let $U = S[X_0, X_1, W_1,\dots,W_l]$ be the free graded $S$-algebra on $l+2$ generators, with $\deg(X_i)=1$ and $\deg(W_k) = i_{k-1}-i_k$ for relevant $i,k$. For a general graded $S$-module $U'$, we denote by $U'_{\leq -1}$ the graded submodule $\bigoplus_{a\leq -1} U'_{a}$. It is also useful to define the elements
\begin{equation} \label{eqn:ZinXWalg}
Z_{i_{k-1}+m} = X_0^{m-1}X_1^{i_k - i_{k-1}-m}W_k \in U_{-1},
\end{equation}
for $1\leq k \leq l$ and $1 \leq m \leq i_k - i_{k-1}$.

\begin{proposition} \label{prop:psialg}
    There is a map of $S$-modules
    \[
    \psi: U_{\leq -1} \rightarrow K
    \]
    defined as follows. Define, for each $1 \leq k \leq l$, the element $\psi(W_k) \in K_{i_k - i_{k-1}}$ to be $(f_m)_{m=1}^{i_k - i_{k-1} + d -1}$ with $f_m = 1$ if $m = i_k$ and all other $f_m$ equal 0. The grading on $U$ implies that any monomial $M$ in $U_{\leq -1}$ divides by $W_k$ for some $k$. If $M = X_0^{\alpha_0} X_1^{\alpha_1} W_1^{\beta_1}\dots W_l^{\beta_l}$ with say $\beta_k >0$, then we set
    \[
    \psi(M) = \tilde x_0^{\alpha_0} \tilde x_1^{\alpha_1} \tilde w_1^{\beta_1}\dots \tilde w_k^{\beta_k -1} \dots \tilde w_l^{\beta_l} \psi(W_k).
    \]
    Furthermore, the elements $\psi(Z_k) \in K_1$, for $1 \leq k \leq d$, give the standard basis of $K_1 \cong S^d$.
\end{proposition}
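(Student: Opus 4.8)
The plan is to deduce everything from the multiplication in the Cox ring $\R(Z')$ of the auxiliary hypersurface, using the identification~\eqref{eq:KandZ'} together with Definition~\ref{operators1} and Proposition~\ref{prop:opwelldefined}. Since $S$ sits in degree $0$, the graded submodule $U_{\leq -1}\subset U=S[X_0,X_1,W_1,\dots,W_l]$ is free over $S$ on the monomials $M=X_0^{\alpha_0}X_1^{\alpha_1}W_1^{\beta_1}\cdots W_l^{\beta_l}$ with $\deg M=\alpha_0+\alpha_1+\sum_k\beta_k(i_{k-1}-i_k)\leq -1$, and every such $M$ has $\beta_k>0$ for at least one $k$. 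So constructing the $S$-module map $\psi$ amounts to checking that the element $\psi(M)$ prescribed in the statement (i) lies in $K_{-\deg M}$ and (ii) is independent of the choice of $k$ with $\beta_k>0$; $S$-linearity is then automatic by freeness, and $\psi$ extends uniquely, sending $U_a$ to $K_{-a}$.

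First I would record two identifications under~\eqref{eq:KandZ'}. By the recipe in the proof of Theorem~\ref{thm:general} applied to $Z'$, the tuple $\psi(W_k)\in K_{i_k-i_{k-1}}$ with a single $1$ in position $i_k$ corresponds to the honest section $w_k\in\R(Z')_{(i_{k-1}-i_k,e)}$ of~\eqref{eq:wkdef}; and by the argument in the proof of Proposition~\ref{prop:opwelldefined} (via Propositions~\ref{xmult}--\ref{zmult}) the operators $\tilde x_0,\tilde x_1,\tilde w_1,\dots,\tilde w_l$ on $K$ are identified with multiplication by the sections $x_0,x_1,w_1,\dots,w_l\in\R(Z')$. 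Hence evaluating the operator string in the definition of $\psi(M)$ is, step by step, just multiplying these sections in the commutative ring $\R(Z')$; one only needs that every intermediate product again lands in the negative part $\bigoplus_{-a\leq -1}\bigoplus_b\R(Z')_{(-a,b)}\cong K$ where these operators are defined, which holds because the $\p^1$-degree starts at $i_{k-1}-i_k\leq -1$, only decreases while the $\tilde w_j$ are applied, and then returns monotonically up to $\deg M\leq -1$ while $\tilde x_1^{\alpha_1}$ and $\tilde x_0^{\alpha_0}$ are applied. Therefore $\psi(M)$ equals the image in $K$ of $x_0^{\alpha_0}x_1^{\alpha_1}w_1^{\beta_1}\cdots w_l^{\beta_l}\in\R(Z')$, an expression symmetric in the $w_j$; this settles (ii), and its bidegree $(\deg M,(\beta_1+\dots+\beta_l)e)$ settles (i).

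For the final assertion, only the factor $W_k$ occurs in $Z_{i_{k-1}+m}=X_0^{m-1}X_1^{i_k-i_{k-1}-m}W_k$, so $\psi(Z_{i_{k-1}+m})=\tilde x_0^{m-1}\tilde x_1^{i_k-i_{k-1}-m}\psi(W_k)$. Since $\tilde x_0,\tilde x_1$ truncate on the right, respectively the left, and $\psi(W_k)$ is the tuple with a single $1$ in position $i_k$, a direct count shows this composite is the tuple in $K_1\cong S^d$ with a single $1$ in position $i_k-(i_k-i_{k-1})+m=i_{k-1}+m$; equivalently it corresponds under~\eqref{eq:KandZ'} to $x_0^{m-1}x_1^{i_k-i_{k-1}-m}w_k=z_{i_{k-1}+m}$ by~\eqref{eq:zinxw}. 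As $(k,m)$ runs over the admissible pairs the index $i_{k-1}+m$ runs through $1,\dots,d$ exactly once, so $\psi(Z_1),\dots,\psi(Z_d)$ are precisely the standard basis of $K_1\cong S^d$.

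I expect the only point needing genuine care to be the bookkeeping that guarantees the operator string in the definition of $\psi(M)$ can be evaluated at all, namely that every intermediate degree index stays $\geq 1$ so that $\tilde x_i$ and $\tilde w_j$ are applied only where Definition~\ref{operators1} defines them, together with pinning down that $\psi(W_k)$ corresponds to the section $w_k$ itself, and not to some other lift of the associated cohomology class, under~\eqref{eq:KandZ'}; once these are in place, commutativity of $\R(Z')$ does the rest. A purely formal alternative, bypassing $Z'$ entirely, would instead verify directly from the truncation and shift formulas of Definition~\ref{operators1} that $\tilde x_0,\tilde x_1,\tilde w_1,\dots,\tilde w_l$ pairwise commute wherever composable, which is correct but more computational.
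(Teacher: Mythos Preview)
Your proposal is correct and follows essentially the same route as the paper: both reduce well-definedness of $\psi$ to commutativity in the Cox ring $\R(Z')$ of the auxiliary hypersurface via the identification~\eqref{eq:KandZ'}, and both note the purely algebraic alternative. You are somewhat more careful than the paper about two bookkeeping points it leaves implicit, namely that the intermediate $K$-indices stay $\geq 1$ so the operator string is everywhere defined, and that $\psi(W_k)$ really matches the section $w_k$ under~\eqref{eq:KandZ'}; and you spell out the final ``standard basis'' claim that the paper dismisses as easily checked.
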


\begin{proof}
    To argue that $\psi$ is well defined, we need only argue the operators $\tilde x_i$ and $\tilde w_k$ all commute with each other, as well as establish that $\tilde w_k \psi(W_{k'}) = \tilde w_{k'} \psi(W_k)$ for each $1 \leq k, k' \leq l$. 

    It is possible to establish this claim directly algebraically, using the definitions of the operators. 
    This somewhat long-winded argument will feature in \cite{pollock}.

    To give a short argument instead, notice as above that the operators correspond to multiplication of sections in the Cox ring $\R(Z')$ of the auxiliary hypersurface $Z'$. Since multiplication of these sections is clearly commutative, all of the operators must commute with each other, and the extra property
    $\tilde w_k \psi(W_{k'}) = \tilde w_{k'} \psi(W_k)$ for each $1 \leq k, k' \leq l$ also holds.

    The final statement is easily checked.
\end{proof}

\begin{theorem}\label{thm:generalgebra}
    Let $J \lhd U$ be the homogeneous ideal defined by
    \begin{equation*}
       J = \langle X_1^{i_1 - i_0} W_1 +Y_{i_0}, X_1^{i_2 - i_1} W_2 + Y_{i_1} - X_0^{i_1 - i_0} W_1,  \dots , Y_{i_l} - X_0^{i_l - i_{l-1}} W_l\rangle. 
   \end{equation*}
    Then $\psi:U_{\leq -1} \rightarrow K$ is surjective with kernel $\ker \psi = J_{\leq -1}$. In particular there is an isomorphism of $S$-modules $K \cong (U/J)_{ \leq -1}$.
\end{theorem}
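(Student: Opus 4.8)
The plan is to prove the two assertions — surjectivity of $\psi$ and $\ker\psi = J_{\leq -1}$ — separately, exploiting the auxiliary hypersurface $Z'$ together with the structural results of Section~2 applied to it. First I would observe that $Z'$ is by construction non-singular and regular with index sequence $\{i_0,\dots,i_l\}$, since the variables $Y_{i_0},\dots,Y_{i_l}$ trivially form a regular sequence in $S = \C[Y_{i_0},\dots,Y_{i_l}]$ (they are the coordinates!). Hence Theorem~\ref{thm:general} applies to $Z'$: its Cox ring is $\R(Z')\cong U'/J'$, where $U' = \C[X_0,X_1,Y_{i_0},\dots,Y_{i_l},W_1,\dots,W_l]$ and $J'$ is exactly the ideal $J$ of the present theorem (with the $Y$-generators of $U'$ playing the role of the coefficient polynomials $g_{i_k}$). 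Via the identification \eqref{eq:KandZ'}, $K \cong \bigoplus_{-a\leq -1}\bigoplus_{b}\R(Z')_{(-a,b)} = \R(Z')_{\leq -1}$ as graded $S$-modules, where the grading on the left is the $a$-grading and that on the right is the first ($\p^1$-)grading. Moreover $U = S[X_0,X_1,W_1,\dots,W_l] = U'$ as a ring, with its $S$-module grading matching the first $\Z$-grading on $U'$. Under all these identifications, I would check that $\psi$ coincides with the degree-$\leq -1$ part of the surjection $\psi' : U' \to \R(Z')$ of Theorem~\ref{thm:general}: indeed $\psi(W_k)$ is defined to be the element of $K_{i_k-i_{k-1}}$ corresponding (via Proposition~\ref{prop:ker}) to $w_k\in\R(Z')_{(i_{k-1}-i_k,e)}$, which is precisely $\psi'(W_k)$, and the operators $\tilde x_i,\tilde w_k$ are multiplication by $x_i, w_k$ by Propositions~\ref{xmult}--\ref{zmult} (this is the content of Proposition~\ref{prop:opwelldefined} and its proof), so $\psi$ and $\psi'|_{U_{\leq -1}}$ agree on monomials, hence everywhere.

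Given this dictionary, surjectivity of $\psi$ is immediate: $\psi'$ is surjective by Theorem~\ref{thm:general}, so its restriction to the $a\leq -1$ graded pieces surjects onto $\R(Z')_{\leq -1}\cong K$. For the kernel, the inclusion $J_{\leq -1}\subseteq\ker\psi$ follows from $J = J'\subseteq\ker\psi'$ together with the fact that $J$ is a homogeneous ideal for the $S$-module grading (its generators lie in degrees $1$ and $0$), so $J_{\leq -1}$ is well-defined and maps to $0$. For the reverse inclusion, suppose $u\in U_{\leq -1}$ with $\psi(u)=0$; then $\psi'(u)=0$, so $u\in J'=J$ since Theorem~\ref{thm:general} gives $\ker\psi' = J$; being homogeneous of $S$-module degree $\leq -1$, $u\in J_{\leq -1}$. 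This yields $\ker\psi = J_{\leq -1}$ and hence the claimed $S$-module isomorphism $K\cong U_{\leq -1}/J_{\leq -1} = (U/J)_{\leq -1}$, the last equality because passing to the degree-$\leq -1$ part is exact.

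The main obstacle — really the only non-formal point — is verifying carefully that the map $\psi$ defined purely combinatorially in Proposition~\ref{prop:psialg} genuinely agrees with $\psi'|_{U_{\leq -1}}$, i.e.\ that the grading conventions line up: one must match the $a$-index on $K_a$ with $-a$ (the negative of the first $\Z$-coordinate) on $\R(Z')$, confirm that $\deg_S(X_i) = 1$ and $\deg_S(W_k) = i_{k-1}-i_k$ are consistent with $\deg(x_i)=(1,0)$ and $\deg(w_k) = (i_{k-1}-i_k,e)$ after forgetting the second coordinate, and check that $\psi$ sends each monomial $M = X_0^{\alpha_0}X_1^{\alpha_1}W_1^{\beta_1}\cdots W_l^{\beta_l}$ to the product of the corresponding sections in $\R(Z')$ — which reduces, by Proposition~\ref{prop:opwelldefined}, to the commutativity of the operators $\tilde x_i,\tilde w_k$ that we already have in hand. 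One should also note that this is not circular: Theorem~\ref{thm:general}, applied to $Z'$, does not itself invoke Theorem~\ref{thm:generalgebra}, because for $Z'$ the coefficients are variables and the needed surjectivity in degrees $-a\leq -1$ is exactly \eqref{eq:KandZ'} read the other way — though one must be slightly careful about the logical dependency and, if necessary, instead run the elementary self-contained algebraic argument sketched in Proposition~\ref{prop:opwelldefined} and Proposition~\ref{prop:psialg} to establish that $\psi$ is well defined, and then quote only the $Z'$-instance of the final isomorphism $\R(Z')\cong U/J$ whose proof of injectivity (the dimension/normality argument in Theorem~\ref{thm:general}) is independent of this section.
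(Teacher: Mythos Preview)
Your approach has a genuine circularity that you acknowledge but do not successfully resolve. The proof of Theorem~\ref{thm:general} explicitly invokes Theorem~\ref{regularsequencekernels} to obtain surjectivity of $U\to\R(Z)$ in degrees $(-a,b)$ with $-a\leq -1$, and the proof of Theorem~\ref{regularsequencekernels} is nothing but the proof of Theorem~\ref{thm:generalgebra} run in slightly greater generality. So when you apply Theorem~\ref{thm:general} to $Z'$, you are assuming what you set out to prove. Your attempted escape, that ``the needed surjectivity in degrees $-a\leq -1$ is exactly \eqref{eq:KandZ'} read the other way'', is false: the identification \eqref{eq:KandZ'} only says that $K$ and $\R(Z')_{\leq -1}$ are isomorphic as $S$-modules; it says nothing about whether the particular sections $x_i,w_k\in\R(Z')$ generate $\R(Z')_{\leq -1}$. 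That generation statement \emph{is} the surjectivity of $\psi$, and there is no shortcut to it here. The dimension/normality argument you want to isolate from Theorem~\ref{thm:general} only shows that $U/J$ is integral of the right dimension; converting the map $U/J\to\R(Z')$ into an isomorphism still requires knowing in advance that it is surjective.

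There is a second, independent obstruction: Theorem~\ref{thm:general} is stated and proved only for $n\geq 3$, whereas $Z'\subset\p^1\times\p^l$. For $l\in\{1,2\}$ the Lefschetz isomorphism $\Pic(\p^1\times\p^l)\cong\Pic(Z')$ fails (e.g.\ for $l=1$ one has $Z'\cong\p^1$, with $\rho=1$), so neither the $\Z^2$-graded Cox-ring framework of Section~2 nor the dimension formula $\dim\R(Z')=\dim Z'+\rho(Z')$ yields $l+2$. Your argument therefore does not cover small $l$ at all. The paper instead gives a direct, self-contained double induction on $l$ and on $a$: one passes from $l$ to $l-1$ by setting $Y_{i_l}=0$ and truncating the last $i_l-i_{l-1}$ entries (Lemma~\ref{lem:inductionw}), and from larger $a$ to smaller $a$ by peeling off a factor of $\tilde w_l$ using Lemma~\ref{lem:inddet}. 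This inductive argument is the input that makes Theorem~\ref{thm:general} work, not a consequence of it.
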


We prove Theorem~\ref{thm:generalgebra} using induction in $l$. If $l = 0$ then $d = 0$, the result is true since in this case $U_{ \leq -1} = K = 0$. We thus assume in the following that $l \geq 1$. For the induction step we need some Definitions and Lemmas.

\begin{definition}\label{def:inductionbar} Let $\bar S = \C[Y_{i_0},\dots,Y_{i_{l-1}}]$. For each $a \geq 1$, let 
\[\bar A_a : \bar S^{a+d'-1} \rightarrow \bar S^{a-1}\]
be the $\C$-module maps as defined at the start of this section, but for the case of $l$ variables with partition $0 = i_0 < \dots < i_{l-1} = d'$, with corresponding auxiliary hypersurface. Denote $\bar K_a=\ker \bar A_a$ and $\bar K = \bigoplus_{a \geq 1} \bar K_a$. Define the operators $\tilde \zeta_k\colon\bar K_a\to \bar K_{a+i_k - i_{k-1}}$ for $1 \leq k \leq l-1$ analogously to the $\tilde w_k$ in Definition~\ref{operators1}(iii). We abuse notation slighlty and still use $\tilde x_0$ and $\tilde x_1$ for the respective truncation operators. Define $\bar U = S[X_0,X_1,\bar \zeta_1,\dots,\bar \zeta_{l-1}]$, and define the ideal $\bar J \lhd \bar U$ and map $\bar \psi : \bar U_{\leq -1} \rightarrow \bar K$ analogously to $J$ and $\psi$ in Proposition~\ref{prop:psialg}.
\end{definition}

\begin{lemma}\label{lem:inductionw}
    Consider an element $\psi (W_{k_s} \cdots W_{k_1} W_{k_0}) \in K_t$, where no $k_i$ is equal $l$. Setting $Y_{i_{l}} = 0$ in the resulting element of $S^{t+d-1}$ and discarding the final $i_l - i_{l-1}$ components gives us an element of $\bar S^{t+d'-1}$. This coincides with $\psi(\bar \zeta_{k_s} \cdots \bar \zeta_{k_1} \bar \zeta_{k_0}) \in \bar K_{t}$.
\end{lemma}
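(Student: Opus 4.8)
The plan is to track carefully how the operators and the map $\psi$ behave under the algebraic operation ``set $Y_{i_l}=0$ and discard the last $i_l-i_{l-1}$ coordinates''. Write $\pi\colon S^{t+d-1}\to\bar S^{t+d'-1}$ for this operation; concretely, $\pi$ first applies the $\C$-algebra map $S\to\bar S$ killing $Y_{i_l}$ to each entry, and then forgets the coordinates indexed $t+d'\le m\le t+d-1$ (recall $d-d'=i_l-i_{l-1}$). The key structural observation is that $\pi$ is compatible with the truncation and $\tilde w$-operators of Definition~\ref{operators1} when no $W_l$ is involved: for $1\le k\le l-1$ one has $\pi\circ\tilde x_0=\tilde x_0\circ\pi$, $\pi\circ\tilde x_1=\tilde x_1\circ\pi$, and $\pi\circ\tilde w_k=\tilde\zeta_k\circ\pi$. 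The first two are immediate from the definition of truncation (deleting an end coordinate commutes with deleting a fixed block of trailing coordinates, once one checks indices do not collide, which holds because $\tilde x_1$ deletes the leftmost entry and $\tilde x_0$ the rightmost, while the discarded block always sits strictly to the right of the active window for the elements in question). For $\tilde w_k$, since $\tilde w_k$ is defined through the $\tilde z$'s and those in turn through the explicit formulas of Definition~\ref{operators1}(ii), one reduces the claim to checking $\pi\circ\tilde z_j=\tilde z_j^{(\bar{})}\circ\pi$ for the indices $j$ occurring, where $\tilde z_j^{(\bar{})}$ is the corresponding operator over $\bar S$: this is visible from the formulas, because the entries $f'_m$ for $m\le t+d'-1$ only involve $f_{m'}$ with $m'\le t+d-1$ and $Y$-variables of index $\le d'<i_l$, so setting $Y_{i_l}=0$ and truncating the target is transparent.

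Granting this compatibility, the proof is short. First I would check the base case: $\pi$ applied to $\psi(W_{k_0})\in K_{i_{k_0}-i_{k_0-1}}$, which is the tuple $(f_m)$ with $f_{i_{k_0}}=1$ and all other entries zero. Since $k_0\ne l$, the index $i_{k_0}$ is $\le i_{l-1}=d'<t+d'$ for $t=i_{k_0}-i_{k_0-1}$ wait — more precisely the surviving window has length $t+d'-1$ and $i_{k_0}$ lies within it, so $\pi$ sends this tuple to the analogous standard-basis-type element of $\bar S^{t+d'-1}$, which is exactly $\bar\psi(\bar\zeta_{k_0})=\psi(\bar\zeta_{k_0})$ in the notation of the lemma. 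Then I would induct on $s$: assuming $\pi\bigl(\psi(W_{k_{s-1}}\cdots W_{k_0})\bigr)=\psi(\bar\zeta_{k_{s-1}}\cdots\bar\zeta_{k_0})$, apply $\tilde w_{k_s}$ on the left. By the definition of $\psi$ on monomials in Proposition~\ref{prop:psialg}, $\psi(W_{k_s}W_{k_{s-1}}\cdots W_{k_0})=\tilde w_{k_s}\psi(W_{k_{s-1}}\cdots W_{k_0})$ (up to the commutativity of the $\tilde w$'s already established there), so applying $\pi$ and using $\pi\circ\tilde w_{k_s}=\tilde\zeta_{k_s}\circ\pi$ together with the inductive hypothesis gives $\tilde\zeta_{k_s}\psi(\bar\zeta_{k_{s-1}}\cdots\bar\zeta_{k_0})=\psi(\bar\zeta_{k_s}\cdots\bar\zeta_{k_0})$, as desired.

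The main obstacle, and the step deserving the most care, is the index bookkeeping in the compatibility claim $\pi\circ\tilde w_k=\tilde\zeta_k\circ\pi$ (equivalently $\pi\circ\tilde z_j=\tilde z_j^{(\bar{})}\circ\pi$): one must verify that the trailing block of $i_l-i_{l-1}$ coordinates that gets discarded never interferes with the entries that the formulas of Definition~\ref{operators1}(ii)--(iii) use to compute the surviving coordinates, and that the $Y$-variables appearing there all have index $\le d'$. A clean way to organize this is to note, as in the alternative arguments of Propositions~\ref{prop:opwelldefined} and~\ref{prop:psialg}, that all these operators are multiplication by the corresponding Cox-ring sections of the auxiliary hypersurface $Z'$, and that $\pi$ corresponds to the natural restriction from $Z'$ to the sub-hypersurface $Z'\cap\{Y_{i_l}=0\}$ (which is exactly the auxiliary hypersurface attached to the shorter partition $0=i_0<\dots<i_{l-1}=d'$); compatibility of multiplication with restriction of sections is then automatic, and the lemma follows. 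I would present the combinatorial verification as the primary argument and mention the geometric reformulation as a sanity check, or vice versa, depending on which the authors find more transparent.
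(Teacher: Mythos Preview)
Your main combinatorial argument is correct and is precisely what the paper's one-line proof (``This follows immediately from the definitions'') is pointing at: you have simply unpacked that sentence into the explicit compatibility $\pi\circ\tilde x_i=\tilde x_i\circ\pi$ and $\pi\circ\tilde w_k=\tilde\zeta_k\circ\pi$ for $k\neq l$, plus a base case and induction on $s$. The index bookkeeping you describe is right: after setting $Y_{i_l}=0$, the formulas of Definition~\ref{operators1}(ii) for the surviving output entries $f'_m$ (those with $m\le a+d'$) involve only input entries $f_{m'}$ with $m'\le a+d'-1$ and only $Y$-variables of index at most $i_{l-1}=d'$, so $\pi$ intertwines the operators as claimed. (One small wording slip: where you write ``$m'\le t+d-1$'' you presumably mean the surviving input window $m'\le a+d'-1$.)

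However, your proposed geometric ``sanity check'' is not correct as stated. The intersection $Z'\cap\{Y_{i_l}=0\}$ is \emph{not} the auxiliary hypersurface for the shorter index sequence: restricting the defining equation of $Z'$ to $\{Y_{i_l}=0\}$ gives
\[
x_0^{\,d-d'}\bigl(Y_{i_0}x_0^{d'}+\cdots+Y_{i_{l-1}}x_1^{d'}\bigr)=0,
\]
which is reducible, containing the divisor $\{x_0=0\}$ with multiplicity $d-d'$ in addition to the desired $\bar Z'$. So ``restriction of sections from $Z'$ to $Z'\cap\{Y_{i_l}=0\}$'' does not literally recover the Cox ring of $\bar Z'$, and the identification of $\pi$ with a simple restriction map fails. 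Drop this paragraph, or reformulate it more carefully; the combinatorial verification already suffices.
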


\begin{proof}
    This follows immediately from the definitions.
\end{proof}

\begin{lemma}\label{lem:inddet}
    Suppose $u = (f_m)_{m = 1}^{a + d-1} \in K_a$. If $a > i_l - i_{l-1}$ then $u$ is determined by its first $a + i_{l-1} - 1$ entries. If $a \leq i_l - i_{l-1}$ then $u$ is determined by its first $a + i_{l-1} - 1$ entries up to free choices of the entries $f_{a+ i_{l-1}},\dots, f_{i_l}$.
\end{lemma}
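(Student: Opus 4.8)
The plan is to read the structure of $K_a=\ker A_a$ directly off the shape of the matrix $A_a$ and then carry out elementary bookkeeping of indices. Since $i_l=d$, the condition $(f_m)_{m=1}^{a+d-1}\in K_a$ unwinds into the $a-1$ scalar relations
\[
Y_{i_l}f_{s+d}=-\bigl(Y_{i_0}f_s+Y_{i_1}f_{s+i_1}+\dots+Y_{i_{l-1}}f_{s+i_{l-1}}\bigr),\qquad s=1,\dots,a-1,
\]
one for each row of $A_a$. The only genuinely algebraic input I will use is that $S=\C[Y_{i_0},\dots,Y_{i_l}]$ is an integral domain, so that each such relation determines $f_{s+d}$ \emph{uniquely} in terms of the entries appearing on its right-hand side; and those entries all have index at most $s+i_{l-1}\le(a-1)+i_{l-1}=a+i_{l-1}-1$.

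First I would treat the case $a>i_l-i_{l-1}$. Here $d+1\le a+i_{l-1}$, so as $s$ runs over $1,\dots,a-1$ the index $s+d$ runs over $d+1,\dots,a+d-1$, a range that contains every index $\ge a+i_{l-1}$, i.e.\ every entry lying beyond the first $a+i_{l-1}-1$. By the previous paragraph each such entry is uniquely determined by $f_1,\dots,f_{a+i_{l-1}-1}$, which is the first assertion.

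Next I would treat $a\le i_l-i_{l-1}$, so that $a+i_{l-1}\le i_l=d$. For $s=1,\dots,a-1$ the same relations determine $f_{s+d}$, with $s+d$ now running over $d+1,\dots,a+d-1$, in terms of entries of index $\le a+i_{l-1}-1$; in particular the entries $f_{a+i_{l-1}},\dots,f_{i_l}$ appear on no right-hand side. Conversely, no entry $f_j$ with $a+i_{l-1}\le j\le i_l$ ever occurs as the left-hand term $f_{s+d}$ of a relation, since $j=s+d$ would force $s\le0$. Hence these $i_l-i_{l-1}-a+1$ entries are unconstrained, while once they together with $f_1,\dots,f_{a+i_{l-1}-1}$ are fixed, the remaining entries $f_{i_l+1},\dots,f_{a+d-1}$ are determined; this is the second assertion.

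I do not expect a substantive obstacle here: the argument is essentially self-contained, and the only points needing care are the boundary index arithmetic that separates the two cases and the use of the domain property of $S$ to pass from ``$Y_{i_l}f_{s+d}=(\text{known})$'' to ``$f_{s+d}=(\text{known})$''. If one prefers, both statements can alternatively be deduced from the identification~\eqref{eq:KandZ'} of $K$ with spaces of sections on the auxiliary hypersurface $Z'$, but the direct matrix computation above is shorter and I would present it that way.
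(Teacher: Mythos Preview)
Your proof is correct and takes essentially the same approach as the paper's: both read the constraints directly off the rows of $A_a$, solving each row for the top-index entry $f_{s+d}$ and then doing index bookkeeping to separate the two cases. Your write-up is slightly more explicit than the paper's terse argument, in particular in invoking the domain property of $S$ to pass from $Y_{i_l}f_{s+d}=(\text{known})$ to uniqueness of $f_{s+d}$, and in spelling out why the entries $f_{a+i_{l-1}},\dots,f_{i_l}$ appear in no relation when $a\le i_l-i_{l-1}$.
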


\begin{proof}
    If $a > i_l - i_{l-1}$, then the first $i_l - i_{l-1}$ rows of $A_a$ give equations determining the remaining entries. If $a \leq i_l - i_{l-1}$, then the rows of $A_a$ determine each of the last $a-1$ entries in $u$ and the $a+i_{l-1}, \dots, i_l$ columns of $A_a$ are zero columns, giving the remaining free choices.
\end{proof}

\begin{proof}[Proof of Theorem~\ref{thm:generalgebra}]
    We know the result is true for $l = 0$, so suppose that $1 \leq l \leq d$. We first prove that $\psi$ is surjective. Suppose that $a \geq 1$ and $u = (f_m)_{m=1}^{a+d-1}\in K_a$ is given. Setting $Y_{i_l} = 0$ and discarding the last $i_l - i_{l-1}$ elements we obtain $\bar u = (\bar f_m)_{m=1}^{a+d'-1} \in \bar K_a$. By induction in $l$ we can find $p(X_0,X_1,\bar \zeta_1,\dots,\bar \zeta_{l-1}) \in \bar U_{\leq -1}$ such that $\bar \psi(p) = \bar u$. By Lemma~\ref{lem:inductionw} we have
    \[
    u' = u - \psi \left(p(X_0,X_1,W_1,\dots,W_{l-1})\right) = (f'_1 Y_{i_l},\dots,f'_{a+d'-1} Y_{i_l},F_{a+d'},\dots,F_{a+d-1}) \in K_a
    \]
    for some $f'_1,\dots,f'_{a+d'-1},F_{a+d'},\dots,F_{a+d-1} \in S$. We continue inductively in $a$.
    
    Suppose $1 \leq a \leq i_l - i_{l-1} = d-d'$. Let $v = (f'_1,\dots,f_{a+d'-1},0,\dots,0) \in K_1$, where the number of zeroes at the end is $d-d' -a +1$. By the final statement of Proposition~\ref{prop:psialg}, 
    \[v = \psi(f'_1 Z_1 + \dots + f'_{a+d'-1}Z_{a+d'-1})
    \]
    is in the image of $\psi$. Now consider $\tilde x_0^{d-d' -a +1} \tilde w_l (v) \in K_a$, which is equal
    \[
    (f'_1 Y_d,\dots,f'_{a+d'-1}Y_d,0,\dots,0,f''_{d+1},\dots,f''_{a+d-1}).
    \]
    Now $\tilde x_0^{d-d' -a +1} \tilde w_l (v)$, which is in the image of $\psi$ since $v$ is, and $u'$ have the same first $a+d'-1$ entries in common, thus the last $a-1$ entries of each must also agree by Lemma~\ref{lem:inddet}. Then
    \[
    u' - \tilde x_0^{d-d' -a +1} \tilde w_d (v) = (0,\dots,0,F_{a+d'},\dots,F_{d},0,\dots,0) \in K_a.
    \]
    But this is in turn equal to
    \[
    \psi\left((F_{a+d'} X_1^{d-d'-a} + \dots + F_{d} X_0^{d - d' -a})W_l\right),
    \]
    so that $u$ can be written in the image of $\psi$.

    If instead $a > d - d'$, then $(f'_1 Y_{i_l},\dots,f'_{a+d'-1} Y_{i_l}) \in K_{a-d+d'}$. Since $Y_{i_l}$ is not a zero divisor in $S$, we have $v = (f'_1,\dots,f'_{a+d'-1}) \in K_{a-d+d'}$. Since the first $a+d'-1$ coordinates of $u'$ and $\tilde w_l(v)$ are equal, we have $u' = \tilde w_l (v)$ by Lemma~\ref{lem:inddet}. By induction in $a$, $v$ lies in the image of $\psi$, and thus so does $u$.

    We now turn to the kernel of $\psi$. We can check that $J_{\leq -1} \subset \ker \psi$. Suppose that $p \in (\ker \psi)_{-a}$ for some $a \geq 1$. Consider $\bar p \in \bar U$ obtained from $p$ by setting $Y_{i_l} = W_l = 0$ and replacing each $W_k$ with $\bar \zeta_k$ for each $1 \leq k \leq l-1$. By Lemma~\ref{lem:inductionw}, $\bar p \in \ker \bar \psi$. By induction in $l$, we have $\bar p \in \bar J$, and thus $\bar p$ may be written as an $S$-linear combination of the equations
\[
X_1^{i_{k+1} - i_{k}} \bar \zeta_{k+1} + Y_{i_k} - X_0^{i_k - i_{k-1}} \bar \zeta_{k}, \quad 0 \leq k \leq l-1,
\]
where undefined indices are set to be zero. Write $\bar p$ as a $\bar U$-linear combination of these equations, except leaving the variable $\bar \zeta_l$ in where it is usually set it equal to zero. Let $p' \in U$ be the corresponding element of $J$ obtained from this linear combination by replacing $\bar \zeta_k$ with $W_k$ for each $1 \leq k \leq l$, so that $p' \in J_{\leq -1}$. We can replace $p$ with $p-p'$ without affecting whether it belongs to $J_{\leq -1}$. By construction, all terms of $p$ now divide by $Y_{i_l}$ or $W_l$. Adding a multiple of $Y_{i_l} - X_0^{i_l - i_{l-1}}W_l$, which is in $J$, we can replace $p$ with something dividing by $W_l$. Write $p = q W_l$.

If $a \leq i_l - i_{l-1}$ then $q \in U_{\geq 0}$. Consider the section \[q(x_0,x_1,w_1,\dots,w_l)\in\R(Z'),\] in the Cox ring of the auxiliary hypersurface $Z'$. The section $q(x_0,x_1,w_1,\dots,w_l)w_l \in \R(Z')$ is identified with $\psi(p) = 0$ in \eqref{eq:KandZ'} and is thus the zero section. Since $Z'$ is irreducible and normal, its Cox ring has no zero divisors, thus $q(x_0,x_1,w_1,\dots,w_l) = 0$ in $\R(Z')$. By Corollary~\ref{cor:partialdescription}, $q(X_0, X_1,W_1,\dots,W_l) \in \psi(J)$ . Then $p \in J_{\leq -1}$.

If $a > i_l -i_l$, then $0 = \psi (p) = \tilde w_l \psi(q)$. Now $\tilde w_l$ is an injective operator by Lemma~\ref{lem:inddet} and the fact it is multiplication by $Y_{i_l}$ on the first $a + d'-1$ terms. Thus $\psi(q) = 0$. By induction, $q \in J_{\leq -1}$ and thus $p\in J_{\leq -1}$. 
\end{proof}

We add a final Proposition, whose generalisation in Proposition~\ref{prop:generalWideal} is used for Example~\ref{ex:regcasemodel}.

\begin{proposition}\label{prop:universalWideal}
    Suppose that $l = d$ and $J, U$ are as in Theorem~\ref{thm:generalgebra}. Let $U' = S[W_1,\dots,W_d]$ be the subalgebra of $U$ generated by $W_1,\dots,W_d$. Let $J' \lhd U'$ be the homogeneous ideal generated by the equations
    \[
    Y_k (W_{k'+1} W_{k''} - W_{k'} W_{k''+1}) - Y_{k'} (W_{k+1} W_{k''} - W_k W_{k''+1}) + Y_{k''} (W_{k+1} W_{k'} - W_k W_{k'+1})
    \]
    for $0 \leq k < k' < k'' \leq d$. Then the natural map $\phi:U'_{\leq -1} \rightarrow (U/J)_{\leq -1} $ is surjective and has kernel $J'$.
\end{proposition}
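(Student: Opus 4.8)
Assume $d\ge2$; for $d\le1$ one has $J'=0$ and the statement is immediate. The plan is to prove surjectivity of $\phi$ and the equality $\ker\phi=J'$ separately, the latter by reducing it to the primality of $J'$. I first verify that $J'\subseteq\ker\phi$, so that $\phi$ factors through a graded ring map $\tilde\phi\colon U'/J'\to U/J$. The generators of $J$ give, in $U/J$, the identities $Y_j\equiv X_0W_j-X_1W_{j+1}$ for $0\le j\le d$ (with $W_0=W_{d+1}=0$), and a direct expansion identifies the $(k,k',k'')$-generator of $J'$, up to sign, with the $3\times3$ minor on columns $k,k',k''$ of the $3\times(d+1)$ matrix $M$ whose $j$-th column is $(Y_j,W_j,W_{j+1})$:
\[
M=\begin{pmatrix} Y_0 & Y_1 & \cdots & Y_{d-1} & Y_d\\ 0 & W_1 & \cdots & W_{d-1} & W_d\\ W_1 & W_2 & \cdots & W_d & 0\end{pmatrix}.
\]
Modulo $J$ the first row of $M$ is $X_0$ times the second minus $X_1$ times the third, so every $3\times3$ minor of $M$ vanishes in $U/J$; hence each generator of $J'$ maps to zero. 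I also record that eliminating $Y_0,\dots,Y_d$ via the generators of $J$ shows $U/J\cong\C[X_0,X_1,W_1,\dots,W_d]$, so $U/J$ is an integral domain of Krull dimension $d+2$.

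For surjectivity of $\phi$ it suffices to show that, modulo $J$, every monomial $X_0^{\alpha_0}X_1^{\alpha_1}W_1^{\beta_1}\cdots W_d^{\beta_d}$ with $\sum_k\beta_k>\alpha_0+\alpha_1$ is an $S$-linear combination of monomials in the $W_k$ alone. I would induct on the $X$-degree $\alpha_0+\alpha_1$, using the relations $X_1W_k\equiv X_0W_{k-1}-Y_{k-1}$ and $X_0W_k\equiv X_1W_{k+1}+Y_k$ (with $W_0=W_{d+1}=0$) to slide one $X$-variable past the $W$'s: when $\alpha_1\ge1$ apply the first relation at the $W_k$ of smallest index present, and when $\alpha_1=0$ apply the second at the $W_k$ of largest index present. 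Each such step produces a term of strictly smaller $X$-degree, handled by the inductive hypothesis, together with a leading term of the same $X$-degree and total $W$-degree in which --- unless we have already hit $W_0=0$ or $W_{d+1}=0$, which itself lowers the $X$-degree --- the spread $\max\{k:\beta_k>0\}-\min\{k:\beta_k>0\}$ of the $W$-exponent has strictly increased; here one uses that $\sum_k\beta_k\ge2$, so the $W$-exponent is never a single first-power variable and the spread genuinely grows. Since the spread is bounded by $d-1$, the leading term is forced to lower its $X$-degree within $d$ steps, and the induction closes.

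For the equality $\ker\phi=J'$ it suffices, as $J'\subseteq U'_{\le-1}$, to show $\tilde\phi$ is injective; then $\ker\phi$ is the preimage of $0$ under $U'_{\le-1}\to(U'/J')_{\le-1}$, which equals $J'$. Using the generators $X_1W_1+Y_0$ and $X_1W_2+Y_1-X_0W_1$ of $J$ one checks that $X_1$ and then $X_0$ lie in $\operatorname{Frac}(\operatorname{im}\tilde\phi)$, so $\operatorname{Frac}(\operatorname{im}\tilde\phi)=\operatorname{Frac}(U/J)$ and $\dim(U'/J')\ge d+2$; it therefore suffices to prove that $U'/J'$ is an integral domain of dimension $\le d+2$, since then $\ker\tilde\phi$ is a height-zero prime of a domain, hence zero. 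Now $J'$ is the ideal of $3\times3$ minors of $M$, and $V(J')$ is irreducible of dimension $d+2$: it is the closure of the image of the generically injective morphism $(a,b,W_\bullet)\mapsto(bW_1,\ aW_1+bW_2,\ \dots,\ aW_{d-1}+bW_d,\ aW_d;\ W_\bullet)$, with the stratum $\{W=0\}$ contained in that closure. Thus $J'$ has the generic codimension $d-1$ for $3\times3$ minors of a $3\times(d+1)$ matrix, so $U'/J'$ is Cohen--Macaulay by Hochster--Eagon, in particular unmixed; and on the chart $D(W_1)$, row operations clearing the first column of $M$ turn $J'$ into the ideal of $2\times2$ minors of a $2\times d$ matrix whose two rows involve disjoint sets of algebraically independent variables, hence a $1$-generic determinantal ideal, which is prime. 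So $U'/J'$ is reduced at the generic point of $V(J')$, and an irreducible, generically reduced Cohen--Macaulay ring is a domain; thus $U'/J'$ is a domain of dimension $d+2$, as needed.

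The heart of the matter is this last step. Because $M$ is not itself $1$-generic --- its two $W$-rows overlap --- the primality of $J'$ does not come for free, and establishing it seems to require the detour above through the explicit parametrization of $V(J')$, generic perfection of determinantal ideals, and a chartwise $1$-genericity check; one must also keep the chain of equalities $\dim U/J=\dim\operatorname{im}\tilde\phi=\dim U'/J'=d+2$ consistent throughout. A direct Gr\"obner-basis or straightening-law analysis of the generators of $J'$, which do resemble straightening relations, is a possible alternative, but appears more laborious.
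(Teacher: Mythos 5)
Your argument is correct in outline, but it takes a genuinely different route from the paper. The paper proves this proposition by a double induction (on $d$ and on the degree $a$), piggybacking on the machinery already built: it uses $\ker\psi=J_{\leq -1}$ from Theorem~\ref{thm:generalgebra} to place $p\in\ker\phi$ inside $J$, uses the generators of $J'$ to arrange $p=qW_d$, and then invokes injectivity of the operator $\tilde w_d$ to descend to the case $d-1$; the containment $J'\subseteq J$ is explained via the degree-$3$ Koszul syzygies of $Y_0,\dots,Y_d$. You instead work entirely inside $U/J\cong\C[X_0,X_1,W_1,\dots,W_d]$ and $U'/J'$: you recognize $J'$ as $I_3$ of the $3\times(d+1)$ matrix $M$ (equivalent to the Koszul description, since the first row of $M$ is $X_0(\text{row 2})-X_1(\text{row 3})$ mod $J$), establish that $V(J')$ is irreducible of dimension $d+2$ via the parametrization $Y_k=aW_k+bW_{k+1}$, get Cohen--Macaulayness from Hochster--Eagon since the codimension $d-1$ is the generic one, check primality on $D(W_1)$ where $J'$ becomes a Segre ideal, and conclude injectivity of $U'/J'\to U/J$ by a dimension count. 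This buys you independence from Theorem~\ref{thm:generalgebra} and the operator formalism, the stronger statement $J\cap U'=J'$ in all degrees, and structural information about $U'/J'$ (determinantal, CM, a domain) that the paper's proof does not record; the cost is that the argument leans on the identification $U/J\cong\C[X_i,W_k]$, i.e.\ on $g_k=Y_k$ being coordinates, so it does not transfer directly to the generalization in Proposition~\ref{prop:generalWideal}, which the paper's inductive scheme handles by simply replacing ``set $Y_d=0$'' with ``reduce mod $g_d$''. Your surjectivity argument is essentially the rewriting the paper asserts in one line, with the termination made explicit via the ``spread'' invariant. Two places where your sketch should be expanded in a full write-up: the claim that the stratum $\{W=0\}$ lies in the closure of the parametrized locus (a short computation sending $b\to\infty$, $W\sim b^{-1}$, with the single constraint $aW_d=Y_d$ absorbed; alternatively, equidimensionality of the CM ring $U'/J'$ rules out the $(d+1)$-dimensional stratum as a component, which avoids the closure computation altogether), and the standard localization identity $I_3(M)U'_{W_1}=I_2(N)U'_{W_1}$ after clearing the unit entry.
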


\begin{proof}
    The equations correspond exactly to the images of the standard basis vectors in the degree $3$ map of the Koszul complex of the sequence $Y_0,Y_1,\dots,Y_d$ in $S$, and thus are contained in $J$. Alternatively, that $J' \subset J$ can be checked explicitly by writing the generators of $J'$ explicitly as $S$-combinations of the generators of $J$.
    
    We use induction in $d$. If $d = 0$ the result is immediate, so suppose that $d \geq 1$. Let $\bar U'$ and $\bar J'$ be the respective analogues of $U'$ and $J'$ with parameters $d-1 = l-1$. The equations in $J$ imply that any element in $(U/J)_{\leq -1}$ can be written as a polynomial in the $Y_j, W_k$, thus $\phi$ is surjective. Notice next that $(U/J)_{-1} \cong S^d \cong (U'/J')_{-1}$. Suppose that $p(Y_j,W_k) \in \ker \phi$ has degree $-a \leq -2$. Then $p(Y_j,W_k)$ is also in $\ker \psi = J$. As in the proof of Theorem~\ref{thm:generalgebra}, we can assume all terms of $p$ divide by $Y_d, W_d$. The equations of $J'$ can then be used to replace $p$ with a polynomial dividing by $W_d$, so that $p = qW_d$. Since $\tilde w_d$ is an injective operator, we conclude that $q \in \ker \phi$. Using induction in the degree~$a$, we have $q\in J'$ and thus $p \in J'$.
\end{proof}

\begin{theorem} \label{regularsequencekernels}
    Fix $d \geq 0$ and fix an admissible index sequence $\{i_0,\ldots, i_l\}$ for some $0 \leq l \leq d$. Let $S$ be a finitely generated graded $\C$-algebra with projective dimension $\kappa \geq l$. Suppose that $g_{i_0},\dots,g_{i_l}$ are homogeneous elements of degree $e$ forming a regular sequence in $S$. For each $a \geq 1$, define a map of free $S$-modules $A_a : S^{a+d-1} \rightarrow S^{a-1}$ by the $(a-1) \times (a+d-1)$ matrix with $(s,t)$ entry equal to $g_{i_k}$ if $t-s = i_k$ and $0$ otherwise. Let $K_a = \ker A_a$ and $K = \bigoplus_{a\geq 1} K_a$. Let $U = S[X_0,X_1,W_1,\dots,W_l]$ be the free $\Z^2$-graded $S$-algebra with $\deg_U(s) = (0,\deg_S(s)), \deg(X_i) = (1,0)$ and $\deg(W_k) = (i_{k-1}-i_k,e)$ for $s \in S$ and relevant $i,k$.     Then there is a surjective map
    \[
    \psi : U_{\leq -1} = \bigoplus_{-a \leq -1} \bigoplus_{b\in \Z} U_{(-a,b)} \rightarrow K,
    \]
whose kernel contains $J_{\leq -1}$, where $J \lhd U$ is the homogeneous ideal given by
    \[
       J = \langle X_1^{i_1 - i_0} W_1 + g_{i_0}(Y_j), X_1^{i_2 - i_1} W_2 + g_{i_1}(Y_j) - X_0^{i_1 - i_0} W_1,  \dots , g_{i_l}(Y_j) - X_0^{i_l - i_{l-1}} W_l\rangle.
    \]
\end{theorem}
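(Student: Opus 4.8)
The plan is to reduce the statement to the already-proved Theorem~\ref{thm:generalgebra}, which handles the ``universal'' case $S = \C[Y_{i_0},\dots,Y_{i_l}]$ with the $g_{i_k}$ replaced by the variables $Y_{i_k}$, by a base-change argument. Let $S_0 = \C[Y_{i_0},\dots,Y_{i_l}]$ and let $\alpha\colon S_0 \to S$ be the $\C$-algebra map sending $Y_{i_k} \mapsto g_{i_k}$. Since $g_{i_0},\dots,g_{i_l}$ form a regular sequence in $S$, the element $\alpha$ makes $S$ into a free (hence flat) $S_0$-module: this is the standard fact that a regular sequence generates a polynomial subring over which the ambient ring is free, which needs the hypothesis that the projective dimension $\kappa$ of $S$ is at least $l$ (so that a regular sequence of length $l+1$ can exist and behaves well). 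Write $A_a^{(0)}$ for the universal matrices over $S_0$ and $A_a$ for the matrices over $S$; then $A_a = A_a^{(0)} \otimes_{S_0} S$ as maps of free $S$-modules. Flatness of $S$ over $S_0$ gives $\ker A_a = (\ker A_a^{(0)}) \otimes_{S_0} S$, so $K \cong K^{(0)} \otimes_{S_0} S$ as graded modules, where $K^{(0)}$ is the kernel module of the universal problem.

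Next I would set up the corresponding base change on the polynomial-ring side. With $U_0 = S_0[X_0,X_1,W_1,\dots,W_l]$ and $U = S[X_0,X_1,W_1,\dots,W_l]$, we have $U = U_0 \otimes_{S_0} S$, and the ideal $J \lhd U$ is exactly the extension $J_0 \cdot U$ of the universal ideal $J_0 \lhd U_0$ under this base change, because the generators of $J$ are obtained from those of $J_0$ by applying $\alpha$ to the $Y_{i_k}$. Theorem~\ref{thm:generalgebra} provides the universal surjection $\psi_0\colon (U_0)_{\leq -1} \twoheadrightarrow K^{(0)}$ with kernel $(J_0)_{\leq -1}$. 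Tensoring the exact sequence
\[
0 \longrightarrow (J_0)_{\leq -1} \longrightarrow (U_0)_{\leq -1} \stackrel{\psi_0}{\longrightarrow} K^{(0)} \longrightarrow 0
\]
with the flat $S_0$-module $S$ yields a short exact sequence whose middle term is $U_{\leq -1}$, whose right term is $K$, and whose left term is the image of $(J_0)_{\leq -1}\otimes_{S_0} S$. This defines $\psi\colon U_{\leq -1}\to K$ as $\psi_0 \otimes_{S_0} S$; one checks on generators that this $\psi$ agrees with the map described in the statement (it sends $W_k$ to the same standard kernel vector, now with entries $1 \in S$, and is built from the same truncation and multiplication operators, whose formulas only involve the $g_{i_k}$). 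Surjectivity of $\psi$ is then immediate from right-exactness of tensor. Finally, $\ker\psi$ is the image of $(J_0)_{\leq -1}\otimes_{S_0}S$ in $U_{\leq -1}$, which is precisely $(J_0\cdot U)_{\leq -1} = J_{\leq -1}$; so in fact $\ker\psi = J_{\leq -1}$, giving the claimed containment and a little more.

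The main obstacle I anticipate is justifying freeness (or at least flatness) of $S$ over the subring $\C[g_{i_0},\dots,g_{i_l}]$ from the regular-sequence hypothesis, and making precise the role of the projective-dimension bound $\kappa \geq l$: one must ensure the $g_{i_k}$ really are algebraically independent (so that $\alpha$ is injective and $S_0$ is genuinely a polynomial subring) and that the quotient $S/(g_{i_0},\dots,g_{i_l})$ has finite projective dimension over $S_0$, which is where local-cohomology or Auslander--Buchsbaum-type input enters. A secondary, more bookkeeping obstacle is checking that the base-changed map $\psi_0\otimes S$ coincides on the nose with the operator-theoretic map $\psi$ defined in the statement, i.e.\ that the operators $\tilde x_i, \tilde z_k, \tilde w_k$ over $S$ are the base changes of their universal counterparts; but this is routine since all the defining formulas in Definition~\ref{operators1} are $S_0$-linear in the $Y_{i_k}$ and hence commute with applying $\alpha$.
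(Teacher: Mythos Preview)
Your base-change approach is correct and genuinely different from the paper's argument. The paper does not tensor up from the universal case; instead it reruns the inductive proof of Theorem~\ref{thm:generalgebra} directly over $S$, replacing each $Y_{i_k}$ by $g_{i_k}$, using $\bar S = S/(g_{i_l})$ in place of $\bar S = \C[Y_{i_0},\dots,Y_{i_{l-1}}]$ for the induction step (this is where the parameter $\kappa$ drops by one), and invoking only that $g_{i_l}$ is a non-zero-divisor and that permuted homogeneous regular sequences stay regular. Your route is cleaner once flatness is in hand, and it yields more than the paper proves here: you get $\ker\psi = J_{\leq -1}$ on the nose, whereas the paper's Theorem~\ref{regularsequencekernels} only asserts the containment and postpones the equality to other methods (see the Remark following the theorem).

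The obstacle you flag is not really an obstacle. Flatness of $S$ over $S_0=\C[Y_{i_0},\dots,Y_{i_l}]$ follows immediately from the regular-sequence hypothesis: the Koszul complex $K_\bullet(Y_{i_0},\dots,Y_{i_l};S_0)$ resolves $\C$ over $S_0$, and tensoring with $S$ gives $K_\bullet(g_{i_0},\dots,g_{i_l};S)$, which is acyclic because the $g_{i_k}$ are regular; hence $\mathrm{Tor}^{S_0}_i(\C,S)=0$ for $i>0$, and graded Nakayama (for bounded-below graded modules over a positively graded polynomial ring) then gives that $S$ is free over $S_0$. In particular $\alpha$ is injective, so algebraic independence of the $g_{i_k}$ comes for free. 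Note also that in the paper $\kappa$ is the dimension of $\Proj S$, not a homological projective dimension, so the Auslander--Buchsbaum language in your sketch is a slight misreading; the bound $\kappa\geq l$ is just what is needed for a length-$(l+1)$ regular sequence to exist, and your argument does not otherwise use it.
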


\begin{proof}
    If $S = \C[Y_{i_0},\dots,Y_{i_1}]$ and $g_{i_k} = Y_{i_k}$ for each $k = 0,\dots, l$, then the statement is that of Theorem~\ref{thm:generalgebra} with refined degrees. All of the proofs given above can be modified to this more general setting. The operators in Definition~\ref{operators1} are defined in the same way with the $Y_{i_k}$ replaced by the $g_{i_k}$. Proposition~\ref{prop:psialg} holds as we symbolically only replace the $Y_{i_k}$ with $g_{i_k}$, preserving commutativity. In Definition~\ref{def:inductionbar}, we set $\bar S = S/(g_{i_l})$, which has projective dimension $\kappa -1 \geq l-1$. For Lemma \ref{lem:inductionw}, rather than setting $Y_{i_l} = 0$, we reduce modulo $g_{i_l}$. Lemma~\ref{lem:inddet} holds with regularity of the sequence $g_{i_0},\dots,g_{i_l}$ in $S$.

    Now the proof follows that of Theorem~\ref{thm:generalgebra}. We use induction in $(\kappa,l,a)$. If $\kappa = 0$ then $l = 0$, so $d = 0$ and the result is trivially true. If $\kappa \geq 1$ and $l =0$, again the result is trivially true. If $l \geq 1$ and $u \in K_a$, then reducing $u$ modulo $g_d$ and discarding the last $i_l - i_{l-1}$ terms puts us in the setting with parameters $(\kappa -1, l-1,a)$, where we may use induction. This gives us the surjectivity of $\psi$ and one checks that $J_{\leq -1} \subset \ker \psi$.
\end{proof}

\begin{remark}
    One can prove that, in the setting of Theorem~\ref{regularsequencekernels}, the kernel of $\psi$ is precisely $J_{\leq -1}$ in the cases $l=d$, or with an open condition on the $g_{i_k}$, purely algebraically from the operators defined. Since the argument used in Theorem~\ref{thm:general} is stronger, proving that $\ker \psi = J_{\leq -1}$ when the auxiliary hypersurface
    \[
    Z' = \{g_{i_0} x_0^d + g_{i_1} x_0^{d-i_1} x_1^{i_1} + \dots + g_{i_l} x_1^d \} \subset \p^1_{x_i} \times \Proj S
    \]
    is non-singular, we omit this.
\end{remark}

\begin{proposition}\label{prop:generalWideal}
    Suppose that $l=d$ and $J,U$ are as in Theorem~\ref{regularsequencekernels}. Let $U' = S[W_1,\dots,W_d]$ be the subalgebra of $U$ generated by $W_1,\dots,W_d$. Let $J' \lhd U'$ be the homogeneous ideal generated by the equations
    \[
    g_{k}(Y_k) (W_{k'+1} W_{k''} - W_{k'} W_{k''+1}) - g_{k'}(Y_k) (W_{k+1} W_{k''} - W_k W_{k''+1}) + g_{k''}(Y_k) (W_{k+1} W_{k'} - W_k W_{k'+1})
    \]
    for $0 \leq k < k' < k'' \leq d$. Then the natural map $\phi:U'_{\leq -1} \rightarrow (U/J)_{\leq -1} $ is surjective and has kernel $J'$.
\end{proposition}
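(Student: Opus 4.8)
The plan is to mimic the proof of Proposition~\ref{prop:universalWideal}, replacing the variables $Y_{i_k}$ by the regular sequence elements $g_{i_k}$ throughout, and to use an induction on the projective dimension $\kappa$ and the degree $a$ rather than on $d$ alone, exactly as in the passage from Theorem~\ref{thm:generalgebra} to Theorem~\ref{regularsequencekernels}. First I would check directly that the generators of $J'$ listed in the Proposition lie in $J$: they are precisely the images of the standard basis vectors under the degree-three differential of the Koszul complex on the regular sequence $g_0,\dots,g_d$, and in the polynomial case this was already observed in the proof of Proposition~\ref{prop:universalWideal}; the general case is the same symbolic manipulation. This gives the inclusion $J' \subseteq \ker\phi$ and also shows $\phi$ is well defined.

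Next I would prove surjectivity of $\phi$. Since by Theorem~\ref{regularsequencekernels} the composite $U_{\leq -1}\to (U/J)_{\leq -1}$ is obtained from $\psi$, and the equations generating $J$ allow one to rewrite any monomial in $X_0,X_1,W_1,\dots,W_d$ of negative $X$-degree as an $S$-linear combination of monomials purely in the $W_k$ (eliminating $X_0,X_1$ using the relations $g_{i_l}-X_0^{i_l-i_{l-1}}W_l=0$ etc., as in the localisation computation in the proof of Theorem~\ref{thm:general}), every class in $(U/J)_{\leq -1}$ has a representative in $U'_{\leq -1}$; this gives surjectivity of $\phi$.

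For the kernel, I would argue by induction on $(\kappa,a)$ as in Theorem~\ref{regularsequencekernels}, the base cases $\kappa=0$ (so $d=0$) and $a=1$ being immediate, the latter because $(U/J)_{-1}\cong S^d\cong (U'/J')_{-1}$. Given $p=p(Y_j,W_k)\in(\ker\phi)_{-a}$ with $a\geq 2$, note $p\in\ker\psi=J_{\leq -1}$ by Theorem~\ref{regularsequencekernels} together with the refined statement (valid in the $l=d$ case, invoked in the Remark) that $\ker\psi=J_{\leq -1}$ here. As in the proof of Theorem~\ref{thm:generalgebra}, I would reduce modulo $g_d$ and pass to the analogue $\bar U'=\bar S[W_1,\dots,W_{d-1}]$ over $\bar S=S/(g_d)$, which has projective dimension $\kappa-1$: by induction on $\kappa$ the reduced element $\bar p$ lies in $\bar J'$, and lifting a $\bar U'$-combination of its generators back to $U'$ lets me subtract an element of $J'$ from $p$ so that every term of the new $p$ is divisible by $g_d$ or by $W_d$. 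Adding a suitable multiple of the Koszul-type relation involving $g_d$ and $W_d$ (which is in $J'$) I can further arrange $p=qW_d$. Since the operator $\tilde w_d$ is injective — this is Lemma~\ref{lem:inddet} together with the observation that $\tilde w_d$ acts as multiplication by $g_d$ on the leading $a+d-1$ coordinates and $g_d$ is a nonzerodivisor in $S$ by regularity — and $\psi(p)=\tilde w_d\psi(q)=0$, we get $\psi(q)=0$, i.e. $q\in\ker\phi$; by the inductive hypothesis on the degree, $q\in J'$, hence $p=qW_d\in J'$, as desired.

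\medskip

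\noindent The main obstacle I anticipate is bookkeeping: making the division steps ``divide $p$ by $g_d$ or $W_d$, then by $W_d$ alone'' rigorous when $g_d$ is a genuine regular sequence element rather than a variable, in particular ensuring that the syzygies used to clear the $g_d$-divisible-but-not-$W_d$-divisible terms all come from the explicit generators of $J'$ rather than some larger set. This is handled exactly as in Proposition~\ref{prop:universalWideal}, but one must be slightly careful that reducing modulo $g_d$ does not destroy regularity of the remaining sequence in $\bar S$ — which is precisely the content of the hypothesis that $g_0,\dots,g_d$ is a regular sequence and $\kappa\geq l=d$, guaranteeing $\bar S$ has projective dimension $\kappa-1\geq d-1$ and $g_0,\dots,g_{d-1}$ remains regular in $\bar S$.
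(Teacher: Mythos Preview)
Your proposal is correct and follows exactly the paper's approach: the paper's proof is a single line stating that the argument is identical to that of Proposition~\ref{prop:universalWideal} with the induction step carried out over $\bar S = S/(g_d)$, and you have faithfully expanded this sketch. (Your detour through $\ker\psi = J_{\leq -1}$ via the Remark is unnecessary --- $p\in\ker\phi$ already gives $p\in J\cap U'$ by the definition of $\phi$ --- but harmless.)
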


\begin{proof}
    The proof is identical to that of Proposition~\ref{prop:universalWideal}, with $\bar U$ being defined as an algebra over $\bar S = S/(g_d(Y_j))$ for the induction step in $d$.
\end{proof}


\begin{thebibliography}{99}
\bibitem{AL} M. Artebani and A. Laface, {\em Hypersurfaces in Mori dream spaces}, J. Algebra 371 (2012), 26--37.
\bibitem{CoxBook} I. Arzhantsev, U. Derenthal, J. Hausen, and A. Laface, {\em Cox rings}, Cambridge Studies in Advanced Mathematics 144, CUP, 2015.

\bibitem{constantinetal} C. Brodie, A. Constantin, J. Gray, A. Lukas, and F. Ruehle, {\em Recent Developments in Line Bundle Cohomology and Applications to String Phenomenology}, in: Nankai Symposium on Mathematical
Dialogues In celebration of S.S.Chern’s 110th anniversary, 2021.
\bibitem{constantin} A. Constantin, {\em Generating Functions for Line Bundle Cohomology Dimensions on Complex Projective Varieties}, Exp. Math. (2024), \href{https://doi.org/10.1080/10586458.2024.2380794}{published online}.
\bibitem{MDS} Y. Hu, S. Keel, {\em Mori dream spaces and GIT}, Michigan Math. J. 48 (2000), 331--348.
\bibitem{hausen} J. Hausen, {\em Cox rings and combinatorics II}, Mosc. Math. J. 8 (2008), 711--757.
\bibitem{lafaceetal} C. Herrera, A. Laface and L. Ugaglia, {\em The Cox ring of an embedded variety}, \href{https://arxiv.org/abs/2411.17370}{arXiv:2411.17370}.
\bibitem{ottem} J. C. Ottem, {\em Birational geometry of hypersurfaces in products of projective spaces}, Math. Z. 280 (2015), 135--148.
\bibitem{pollock} A. Pollock, University of Vienna PhD dissertation, in preparation. 

\end{thebibliography}
\end{document}